\newcommand{\I}{\mathrm{i}}
\newcommand{\ep}{\epsilon}
\newcommand{\mm}{\mathcal{M}}
\newcommand{\mc}{\mathcal{C}}
\newcommand{\mw}{\mathcal{W}}
\newtheorem{thm}{Theorem}[section]
\newtheorem{lmm}[thm]{Lemma}
\newcommand{\cc}{\mathbb{C}}
\newcommand{\dd}{\mathcal{D}}
\newcommand{\ee}{\mathbb{E}}
\newcommand{\ma}{\mathcal{A}}
\newcommand{\pp}{\mathbb{P}}
\newcommand{\ra}{\rightarrow}
\newcommand{\rr}{\mathbb{R}}
\newcommand{\var}{\mathrm{Var}}
\newcommand{\zz}{\mathbb{Z}}
\newcommand{\N}{\mathbb{N}} 
\newcommand{\fpar}[2]{\frac{\partial #1}{\partial #2}}
\newcommand{\spar}[2]{\frac{\partial^2 #1}{\partial #2^2}}
\newcommand{\mpar}[3]{\frac{\partial^2 #1}{\partial #2 \partial #3}}
\newcommand{\hx}{\hat{x}}
\newcommand{\HX}{\hat{X}}
\numberwithin{equation}{section}
\begin{document}
\title{Nonlinear large deviations}
\author{Sourav Chatterjee}
\address{\newline Department of Statistics \newline Stanford University\newline Sequoia Hall, 390 Serra Mall \newline Stanford, CA 94305\newline \newline \textup{\tt souravc@stanford.edu}\newline \textup{\tt adembo@stanford.edu}}
\thanks{Sourav Chatterjee's 
research partially supported by NSF grant DMS-1441513}
\author{Amir Dembo}
\thanks{Amir Dembo's 
research partially supported by NSF grant DMS-1106627}
\keywords{Large deviations, sparse random graphs, regularity lemma, arithmetic progressions, exponential random graph models, concentration of measure}
\subjclass[2010]{60F10, 05C80, 60C05, 05A20}

\begin{abstract}
We present a general technique for computing large deviations of nonlinear functions of independent Bernoulli random variables.  The method is applied to compute the large deviation rate functions for subgraph counts in sparse random graphs. Previous technology, based on Szemer\'edi's regularity lemma, works only for dense graphs.  Applications are also made to exponential random graphs and three-term arithmetic progressions in random sets of integers. 
\end{abstract}

\maketitle

%\tableofcontents

\section{Introduction}
\subsection{A motivating example}\label{motiv}
Let $G(N,p)$ be the Erd\H{o}s--R\'enyi random graph on $N$ vertices with edge probability $p$, that is, the classical model where any two vertices are connected by an edge with probability $p$, independent of all else. Let $T$ denote the number of triangles in this graph. It has been an open question in the random graph literature for a long time~\cite{JR02} to determine the behavior of the upper tail of $T$, that is, probabilities of the type $\pp(T\ge (1+\delta)\ee(T))$. The main difficulty with this problem, and the reason why it may be appealing to a probabilist, is that the standard tools from concentration of measure and other probability inequalities do not seem to work so well in this setting, in spite of the fact that the number of triangles in an Erd\H{o}s--R\'enyi graph is simply a degree three polynomial of independent Bernoulli random variables. 

After a series of successively improving suboptimal results by many authors over many years, a big advance was made by Kim and Vu \cite{kimvu04} and simultaneously by Janson et al.~\cite{JOR04} in 2004 who showed that if $p\ge N^{-1}\log N$,  then 
\[
\exp(-c_1(\delta) N^2 p^2\log(1/p))\le \pp(T\ge (1+\delta)\ee(T))\le \exp(-c_2(\delta)N^2p^2)\, ,
\]
where $c_1(\delta)$ and $c_2(\delta)$ are constants depending on $\delta$ only. 

Several years later, the logarithmic discrepancy between the exponents on the two sides was removed by Chatterjee~\cite{chatriangle} and independently  by DeMarco and Kahn \cite{dk, dk2}, where it was shown that when $p\ge N^{-1}\log N$,
\begin{align*}
\exp(-c_1(\delta) N^2 p^2\log(1/p))&\le \pp(T\ge (1+\delta)\ee(T))\\
&\le \exp(-c_2(\delta)N^2p^2\log(1/p))\, . 
\end{align*}
This still left open the question of determining the dependence of the exponent on $\delta$. When $p$ is fixed and $N$ tends to infinity, the problem was solved in 2011 by Chatterjee and Varadhan \cite{cv1}, confirming a conjecture from an unpublished manuscript of Bolthausen, Comets and Dembo \cite{bolthausenetal09}. In \cite{cv1}, it was shown that for fixed $p\in (0,1)$ and $\delta >0$,
\begin{equation}\label{motiveq}
 \pp(T\ge (1+\delta)\ee(T)) = \exp(-c(\delta, p)N^2(1-o(1)))
\end{equation}
as $N\ra\infty$, where 
\begin{equation}\label{variational}
c(\delta, p) = \frac{1}{2}\inf_f\{I_p(f): T(f)\ge (1+\delta)p^3\}\, ,
\end{equation}
where $f:[0,1]^2 \ra[0,1]$ is any Lebesgue measurable function that satisfies $f(x,y)=f(y,x)$ for all $x$ and $y$, 
\[
I_p(f) = \iint_{[0,1]^2} \biggl(f(x,y)\log \frac{f(x,y)}{p} + (1-f(x,y)) \log \frac{1-f(x,y)}{1-p}\biggr)\, dx\, dy\, ,
\]
and 
\[
T(f) = \iiint_{[0,1]^3}f(x,y)f(y,z)f(z,x) \, dx\, dy\, dz\, .
\]
Incidentally, the variational problem \eqref{variational} has not yet yielded explicit solutions except in special ranges of $\delta$ and $p$ \cite{chadey, cv1, lz, lz2}. 

The above result was proved using Szemer\'edi's regularity lemma \cite{szemeredi78} from graph theory. A well known problem with Szemer\'edi's lemma is that it yields very poor quantitative bounds, which makes it virtually impossible to extend the arguments of \cite{cv1} to the case where $p$ is allowed to tend to  $0$ as $N\ra\infty$. One can show (e.g.~in \cite{lz2}) that  a weaker version of Szemer\'edi's lemma suffices for the proof in \cite{cv1}, which makes it possible to make the technique work when $p$ tends to zero slower than a negative power of $\log N$, but it seems safe to bet that a Szemer\'edi type argument cannot help when $p$ goes to zero like a negative power of $N$.

The last problem mentioned in the previous paragraph, namely, computing $c(\delta, p)$ in \eqref{motiveq} when $p$ goes to zero like $N^{-\alpha}$ for some $\alpha> 0$, was the original motivation for this paper. What we accomplish in this article is the following: We build a general machinery for tackling large deviations of certain class of nonlinear functions of independent Bernoulli random variables, which in particular 
circumvents the use of Szemer\'edi's lemma. Among other things, this approach yields a variational formula for $c(\delta, p)$, analogous to \eqref{variational}, which 
holds when $p=p(N) \to 0$ slower than $N^{-1/42}$. To see what its potential benefits 
may be, we note that after the first version of this paper was posted on arXiv, 
Lubetzky and Zhao \cite{lz2} found a way to explicitly solve our variational problem 
when $N^{-1}\ll p\ll1$. As a result of this additional exciting development, we now 
know that if $p(N) \to 0$ slower than $N^{-1/42}$, then  
\[
\pp(T\ge (1+\delta)\ee(T)) = \exp\biggl(-(1-o(1)) \min\biggl\{\frac{\delta^{2/3}}{2}\, , \, \frac{\delta}{3}\biggr\}N^2 p^2 \log \frac{1}{p}\biggr)\, .
\] 
Therefore, the variational formula for $c(\delta, p)$ proved in this paper and its solution by Lubetzky and Zhao have completed the quest for understanding the behavior of the upper tail of triangle counts in Erd\H{o}s--R\'enyi random graphs under the restriction that $p\gg N^{-1/42}$. It has been conjectured in~\cite{lz2} that the same formula should hold all the way down to $p\gg N^{-1/2}$. A strong evidence in favor of this conjecture is that the Lubetzky--Zhao solution of the variational formula for $c(\delta, p)$ holds whenever $p\gg N^{-1/2}$. The above result has been recently extended to more general subgraph counts by Bhattacharya et al.~\cite{bglz}. For a survey of these developments and a short overview of the emerging field of large deviations for random graphs, see \cite{cha16}. 

\subsection{Goal of the paper}
Suppose that $f:[0,1]^n \ra \rr$ is a function with some amount of smoothness. Let $p$ be a number in the open interval $(0,1)$, and $Y = (Y_1,\ldots, Y_n)$ be a vector of i.i.d.\ $Bernoulli(p)$ random variables. For $u\in [0,1]$ let 
\begin{equation}\label{ipdef1}
I_p(u) := u\log \frac{u}{p} + (1-u)\log \frac{1-u}{1-p}\, ,
\end{equation}
and for each $x= (x_1,\ldots, x_n)\in [0,1]^n$, define 
\begin{equation}\label{ipdef2}
I_p(x) := \sum_{i=1}^n I_p(x_i)\, .
\end{equation}
For each $t\in \rr$, define
\begin{equation}\label{phidef}
\phi_p(t) := \inf\{I_p(x)\, : \, x\in [0,1]^n \text{ such that } f(x)\ge tn\}\, . 
\end{equation}
We want to investigate conditions under which the following ``upper tail approximation'' is valid:
\begin{equation}\label{mainapprox}
\pp(f(Y) \ge tn ) = \exp(-\phi_p(t) + \text{lower order terms})\, .
\end{equation}
Analogous statements may be similarly formulated if the $Y_i$'s have some distributions other than Bernoulli. 

It is known that such an approximation is valid for  continuous functions of the 
empirical measure of i.i.d.~random variables. This fact forms 
the basis of a big part of modern large deviations theory; see 
\cite{dembozeitouni} and the references therein. Since the empirical measure is a linear function of i.i.d.~random objects (Dirac masses at the sample points), this is a class of linear examples. The main goal of this paper is to establish conditions under which this approximation holds in nonlinear settings.

One challenging example of a nonlinear result of the above type is the recent proof 
in \cite{cv1} that the approximation holds for upper tails of 
subgraph counts in dense Erd\H{o}s-R\'enyi random graphs, 
a result which was later generalized to 
random matrices~\cite{cv2} and exponential random graphs \cite{cd}. The proofs in \cite{cv1, cv2, cd} are, however, rather specialized to the random graph setting. The main tool in these papers is the regularity lemma of Szemer\'edi \cite{szemeredi78} and the graph limit theory of Lov\'asz and coauthors \cite{borgsetal08, borgsetal12, lovaszbook} that builds on the regularity lemma. The unavailability of a suitable ``sparse'' version of Szemer\'edi's lemma makes it impossible to extend the results of \cite{cv1, cv2, cd} to sparse graphs. Serious attempts have been made at formulating a sparse graph limit theory and sparse regularity lemmas \cite{br, bccz1, bccz2}, but it is unlikely that these will provide the precision required for large deviations. The reason is that in all existing  formulations, there is always some assumption about the regularity of the graph structure, and it may not be true that random graphs obey such regularity conditions in the large deviations regime. In graph theoretic terminology, the absence of a ``counting lemma'' for sparse graphs is the main impediment to extending a Szemer\'edi type argument to the sparse case.  

More importantly, ideally one should not need to resort to specialized graph theoretic tools to prove an approximation as simple and basic as \eqref{mainapprox} for an $f$ that may be as uncomplicated as a  polynomial of degree three (e.g.\ number of triangles). %One feels that there must be some ``general truth'' that leads to the approximation \eqref{mainapprox}. 

Our main objective here is to give a general error bound for the approximation \eqref{mainapprox} directly in terms of properties of the function $f$
(as elaborated in the sequel), with an error bound small enough to allow extension of the aforementioned graph theoretic large deviation results to sparse random graphs. %Here ``sparse'' means, for example, an Erd\H{o}s-R\'enyi graph $G(n,p)$ where $p$ is allowed to vanish like $n^{-\alpha}$ for some positive $\alpha$. A na\"ive ``sparsification'' of the existing argument based on the regularity lemma is unlikely to allow $p$ going to zero at a rate much faster than some negative power of $\log^*n$. 
Incidentally, there are several notable results on upper bounds for tail probabilities for nonlinear functions of independent Bernoulli random variables. The bounded difference inequality \cite{mcdiarmid89} has been available for a long time. Improved inequalities 
were discovered by Talagrand \cite{talagrand95}, Lata\l a \cite{latala97}, Kim and Vu \cite{kimvu00} and Vu \cite{vu02}. 
%In particular, the methods of Vu \cite{vu02} are sometimes strong enough to bound the tail probabilities of subgraph counts in sparse 
%random graphs within a universal constant factor multiplying $\phi_p(t)$. 
However, all these methods seem to fall short of 
proving an approximation such as \eqref{mainapprox}.

\subsection{The main result}\label{mainsec}
Our main result is Theorem \ref{uppertailthm} which gives a sufficient condition for the validity of the approximation \eqref{mainapprox}. This 
sufficient condition may be roughly described as follows: The approximation \eqref{mainapprox} is valid when, in addition to some minor smoothness conditions on the function $f$, {\it the gradient vector $\nabla f(x) = (\partial f/\partial x_1, \ldots \partial f/\partial x_n)$ may be approximately encoded by $o(n)$ bits of information}. One may call this the ``low complexity gradient'' condition. 

To illustrate this, consider the simple case of
 \[
f(x) = \sum_{i=1}^{n-1} x_i x_{i+1}\,,
\]
where the approximation~\eqref{mainapprox} is not valid. Indeed, 
large deviation probabilities for this function are related to the one-dimensional Ising model, and easily shown to not satisfy \eqref{mainapprox}. For this function, $\partial f/\partial x_i = x_{i-1} + x_{i+1}$ for $2\le i\le n-1$, $\partial f /\partial x_1= x_2$, and $\partial f/\partial x_n = x_{n-1}$, so clearly this gradient vector cannot be approximately encoded by $o(n)$ many bits; we effectively need to know all the $x_i$'s to encode the gradient vector of this function. 
On the other hand,~if 
\[
f(x)=\frac{1}{n}\sum_{1\le i<j\le n} x_i x_j\, ,
\]
as in the Currie-Weiss model, then for each $i$,
\[
\fpar{f}{x_i}= \frac{1}{n}\sum_{j\ne i} x_j = -\frac{x_i}{n} + \frac{1}{n}\sum_{j=1}^n x_j\, .
\]
Thus, the gradient vector is approximately encoded by the single quantity $n^{-1}\sum x_j$,
so the ``low complexity gradient'' condition holds and the large deviation probabilities 
(which in this trivial case are covered by the general theory of large deviations), 
satisfy \eqref{mainapprox}. 

Unfortunately, although its content matches very well the preceding description, 
the actual statement of the theorem is somewhat messier, and requires some 
additional notation which we introduce next.

Let $\|f\|$ denote the supremum norm of $f:[0,1]^n \ra\rr$. Suppose that
$f:[0,1]^n \ra\rr$ is twice continuously differentiable in $(0,1)^n$, such 
that $f$ and all its first and second order derivatives extend continuously to the boundary. 
For each $i$ and $j$, let
\[
f_i := \fpar{f}{x_i}\ \  \text{ and } \ \ 
f_{ij} := \mpar{f}{x_i}{x_j}. 
\]
Define 
\[
a := \|f\|, \ \ b_i := \|f_i\| \ \ \text{ and } \ \ c_{ij} := \|f_{ij}\|\, . 
\]
Given $\ep>0$, let $\dd(\ep)$ be a finite subset of $\rr^n$ such that for all $x\in \{0,1\}^n$, there exists $d = (d_1,\ldots, d_n)\in \dd(\ep)$ such that 
\begin{equation}\label{entropy}
\sum_{i=1}^n (f_i(x)- d_i)^2\le n \ep^2.
\end{equation}
The following theorem gives an error bound for the approximation \eqref{mainapprox} in terms of the quantities $a$, $b_i$, $c_{ij}$ and the sizes of the sets $\dd(\ep)$. 
\begin{thm}\label{uppertailthm}
For $f$ as  above, $p\in (0,1)$ and $Y$ a vector of $n$ i.i.d.\ $Bernoulli(p)$ random variables, let $\phi_p$ be defined as in \eqref{phidef}.
Then, for any $\delta>0$, $\ep>0$ and $t\in \rr$,
\begin{align*}
\log \pp(f(Y)\ge tn) &\le - \phi_p(t-\delta) + \textup{complexity term} \\
&\qquad + \textup{smoothness term}\, ,
\end{align*}
where with $a$, $b$, $c_{ij}$, $\dd(\ep)$ defined above, and $K:= \phi_p(t)/n$, 
\begin{align*}
\textup{complexity term} &:= 
\frac{1}{4}\Big(n\sum_{i=1}^n\beta_i^2\Big)^{1/2}\ep + 3n\ep +\log \biggl(\frac{4K(\frac{1}{n}\sum_{i=1}^n b_i^2)^{1/2}}{\delta\ep}\biggr) \\
&\qquad + \log |\dd((\delta\ep)/(4K))|\, , \ \text{ and}\\
%\end{align*}
%and
%\begin{align*}
\textup{smoothness term} &:= 4\biggl(\sum_{i=1}^n(\alpha \gamma_{ii} + \beta_i^2)+ \frac{1}{4}\sum_{i,j=1}^n \bigl(\alpha \gamma_{ij}^2+\beta_i \beta_j \gamma_{ij} + 4\beta_i \gamma_{ij}\bigr)\biggr)^{1/2} \\
&\qquad + \frac{1}{4}\Big(\sum_{i=1}^n \beta_i^2\Big)^{1/2}\Big(\sum_{i=1}^n \gamma_{ii}^2\Big)^{1/2} + 3\sum_{i=1}^n \gamma_{ii}+ \log 2\, ,
\end{align*}
for
\begin{align*}
\alpha &:= nK + n|\log p| + n|\log (1-p)|\, ,\\
\beta_i &:= \frac{2K b_i}{\delta} + |\log p| + |\log(1-p)|\, , \text{ and}\\
\gamma_{ij}  &:= \frac{2K c_{ij}}{\delta} +\frac{ 6K b_i b_j}{n\delta^2}\, .
\end{align*}
Moreover,  
\begin{align*}
\log \pp(f(Y)\ge tn) &\ge -\phi_p(t+\delta_0) - \ep_0 n - \log 2\, ,
\end{align*}
where
\[
\ep_0  := \frac{1}{\sqrt{n}}\biggl(4+\biggl|\log\frac{p}{1-p}\biggr|\biggr)
\]
and
\[
\delta_0 := \frac{2}{n}\biggl(\sum_{i=1}^n (ac_{ii} + b_i^2)\biggr)^{1/2}\, .
\]
\end{thm}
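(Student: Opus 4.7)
The theorem has two parts: a relatively standard change-of-measure lower bound, and an upper bound that is the main technical content and exploits the low-complexity gradient condition. I would handle them separately.

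\textbf{Lower bound.} Pick $x^{*}\in[0,1]^n$ realizing the infimum in $\phi_p(t+\delta_0)$, so $f(x^{*})\geq(t+\delta_0)n$ and $I_p(x^{*})=\phi_p(t+\delta_0)$, and let $Q:=\bigotimes_i\mathrm{Bern}(x^{*}_i)$; by construction the relative entropy $H(Q\,|\,P)=I_p(x^{*})$. A second-order Taylor expansion of $f$ at $x^{*}$, combined with the bounds $\|f\|\leq a$, $\|f_i\|\leq b_i$, $\|f_{ij}\|\leq c_{ij}$, controls both $|\ee_Q f(Y)-f(x^{*})|$ and $\sqrt{\var_Q f(Y)}$ by a constant multiple of $\bigl(\sum_i(ac_{ii}+b_i^2)\bigr)^{1/2}=\tfrac12\delta_0 n$, so Chebyshev gives $Q(f(Y)\geq tn)\geq 3/4$. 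Similarly, $\log(dP/dQ)(Y)$ is a sum of independent coordinate terms with mean $-I_p(x^{*})$; a Bernstein/Hoeffding-type bound, tuned to the $\sqrt n$ scaling in $\ep_0$, yields $Q\bigl(\log(dP/dQ)(Y)\geq -I_p(x^{*})-\ep_0 n\bigr)\geq 3/4$. On the intersection $G$, of $Q$-measure at least $1/2$, $dP/dQ\geq e^{-I_p(x^{*})-\ep_0 n}$, hence
\[
\pp(f(Y)\geq tn)\geq \ee_Q\bigl[(dP/dQ)\,\ii_G\bigr]\geq \tfrac12 \exp\bigl(-\phi_p(t+\delta_0)-\ep_0 n\bigr).
\]

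\textbf{Upper bound.} For a tilt parameter $\lambda\sim K/\delta$ (which will explain the factor $2K/\delta$ appearing in $\beta_i$ and $\gamma_{ij}$), Chernoff gives $\pp(f(Y)\geq tn)\leq e^{-\lambda tn}\ee_P[e^{\lambda f(Y)}]$; the core task is to show
\[
\log\ee_P[e^{\lambda f(Y)}]\leq \sup_{x\in[0,1]^n}\bigl(\lambda f(x)-I_p(x)\bigr)+\textup{error},
\]
with an error matching the complexity and smoothness terms. My plan is an iterative conditional tilt over coordinates: at step $i$, given $Y_1,\ldots,Y_{i-1}$, tilt the $i$-th Bernoulli by (an approximation of) $\lambda f_i$. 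Replacing $\nabla f$ by the nearest element $d$ of $\dd(\ep')$ with $\ep':=\delta\ep/(4K)$ decouples the tilt from the history; a union bound over $d\in\dd(\ep')$ contributes the $\log|\dd(\ep')|$ piece of the complexity term, while the discretization error $\ep'$ produces the $\tfrac14(n\sum_i b_i^2)^{1/2}\ep$ and $3n\ep$ pieces. The effective tilted measure is a product Bernoulli whose marginals give some $x\in[0,1]^n$ with $\nabla f(x)\approx d$, and the Gibbs variational identity substitutes $\lambda f(x)-I_p(x)$ for the log-mgf. Optimizing $\lambda$ forces $f(x)\geq(t-\delta)n$ and hence $I_p(x)\geq \phi_p(t-\delta)$.

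\textbf{Expected obstacle.} The delicate step is bounding the error from the tilting. The log-density $\lambda f(y)+\textup{normalization}$ of the tilted product measure has first and second derivatives of sup-norm $\beta_i$ and $\gamma_{ij}$ respectively; a Taylor expansion combined with Cauchy--Schwarz on the fluctuations of $Y-x$ produces the composite smoothness expression $\sum_i(\alpha\gamma_{ii}+\beta_i^2)+\tfrac14\sum_{i,j}(\alpha\gamma_{ij}^2+\beta_i\beta_j\gamma_{ij}+4\beta_i\gamma_{ij})$. The $6Kb_ib_j/(n\delta^2)$ correction in $\gamma_{ij}$ has the flavor of a covariance correction from tilted fluctuations, and matching all constants precisely --- including how the $\log p$ and $\log(1-p)$ contributions to $\alpha$ and $\beta_i$ arise from the Bernoulli normalization $\log(1-p+pe^{\lambda d_i})$ --- is where the bookkeeping becomes intricate.
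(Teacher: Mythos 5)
Your lower bound is essentially the paper's argument: change measure to the product $Q=\bigotimes_i\mathrm{Bern}(z_i)$ at a near-optimizer $z$ of $\phi_p(t+\delta_0)$ and control the two bad events by second-moment bounds. Two cautions there. First, Hoeffding cannot be applied to $\log(dP/dQ)$, since the coordinate summands have unbounded range as $z_i\to 0,1$; the paper uses Chebyshev together with the elementary bound $|\sqrt{x}\log x|\le 2/e$, which gives $\var_Q(\log(dP/dQ))\le n\bigl(2+\tfrac12|\log\tfrac{p}{1-p}|\bigr)^2=\tfrac14\ep_0^2n^2$ and hence failure probability $\le 1/4$ at deviation $\ep_0 n$. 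Second, rather than bounding the mean shift and the variance of $f$ under $Q$ separately, the paper bounds $\ee_Q[(f(Y)-f(z))^2]\le\sum_i(ac_{ii}+b_i^2)$ in one stroke via the leave-one-out identity $\ee[(Z_i-z_i)v_i(t,Z)S]=\ee[(Z_i-z_i)(v_i(t,Z)S-v_i(t,Z^{(i)})S_i)]$; your separate treatment is workable in spirit but its constants would need to be rechecked against the stated $\delta_0$.

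The upper bound has a genuine gap. A Chernoff bound with a \emph{linear} tilt $e^{\lambda f}$ can only recover the convex minorant of $t\mapsto\phi_p(t)$: after subtracting $\lambda tn$, the quantity $\sup_x(\lambda(f(x)-tn)-I_p(x))$ is at most $-\phi_p(t-\delta)$ only if $\phi_p(s)\ge\phi_p(t)+\lambda n(s-t)$ for all $s>t$ and $\phi_p(s)\ge\phi_p(t-\delta)-\lambda n(t-s)$ for all $s<t$, i.e.\ only if $\phi_p$ admits a supporting line at $t$. Your step ``optimizing $\lambda$ forces $f(x)\ge(t-\delta)n$'' is precisely this convexity assumption, and it fails in the paper's main applications: for triangle counts the rate behaves like $\min\{\delta^{2/3}/2,\delta/3\}$, which is concave in $\delta$ on the relevant range, so the Chernoff route provably loses the constant. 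The paper avoids this by replacing the linear tilt with a bounded nonlinear truncation: with $\psi(s)=Kh((s-t)/\delta)$ for a smooth step $h$ ($h\equiv-1$ on $(-\infty,-1]$, $h\equiv0$ on $[0,\infty)$) and $K=\phi_p(t)/n$, it sets $g(x)=n\psi(f(x)/n)+\sum_i(x_i\log p+(1-x_i)\log(1-p))$, so that $e^{g(Y)}$ dominates the indicator of $\{f(Y)\ge tn\}$ times the Bernoulli density and, crucially, $\sup_x(g(x)-I(x))\le-\phi_p(t-\delta)$ holds with no convexity whatsoever, because the penalty $-nK$ incurred on $\{f\le(t-\delta)n\}$ exactly matches $\phi_p(t)$. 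Theorem \ref{freethm} is then applied to $g$. The factors $2K/\delta$ in $\beta_i$ and $6Kb_ib_j/(n\delta^2)$ in $\gamma_{ij}$ are the first and second derivatives of the composition $n\psi(f/n)$ (via $\|\psi'\|\le 2K/\delta$, $\|\psi''\|\le 6K/\delta^2$), not a covariance correction from a tilted measure, and the extra summand $\log\bigl(4K(\tfrac1n\sum_i b_i^2)^{1/2}/(\delta\ep)\bigr)$ in the complexity term arises from discretizing the scalar prefactor $\psi'(f(x)/n)$ over a grid when building a net for $\nabla g$ out of $\dd(\delta\ep/(4K))$.
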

We do not attempt to produce a watered down cleaner error bound, since the full power of Theorem \ref{uppertailthm} is needed in our applications.
 
%While it is possible that there are examples where the error bounds provided by Theorem \ref{uppertailthm} are not useful but the approximation \eqref{mainapprox} is valid, we have some examples to demonstrate that the theorem may be useful in nontrivial situations. %, and may even lead to solutions of long-standing open problems. 

\subsection{Application to subgraph counts}\label{subgraphsec}
Let $G = G(N,p)$ be an Erd\H{o}s-R\'enyi random graph on $N$ vertices, with edge probability $p$. Let $H$ be a fixed finite simple graph. Let $\hom(H,G)$ be the number of homomorphisms (edge-preserving maps) from the vertex set $V(H)$ of $H$ into the vertex set $V(G)$ of $G$. This is slightly different than the number of copies of $H$ in $G$, but nicer to work with mathematically. The ``homomorphism density'' of $H$ in $G$ is defined as 
\[
t(H, G) := \frac{\hom(H,G)}{|V(G)|^{|V(H)|}}\, .
\]
Our object of interest is the large deviation rate function for the upper tail of $t(H,G)$.  Let $\mathcal{P}$ denote upper triangular arrays like $x= (x_{ij})_{1\le i<j\le N}$, where each $x_{ij}\in [0,1]$. For any $x\in \mathcal{P}$, let $G_x$ denote the undirected random graph whose edges are independent, and edge $\{i,j\}$ is present with probability $x_{ij}$, and absent with probability $1-x_{ij}$. Let $t(H,x)$ denote the expected value of $t(H, G_x)$. Explicitly, if $H$ has vertex set $\{1,2,\ldots,k\}$ and edge set $E(H)$, then
\[
t(H, x) = \frac{1}{N^{k}}\sum_{q_1,\ldots, q_k=1}^N\prod_{\{l,l'\}\in E(H)} x_{q_lq_{l'}}\, ,
\]
where $x_{ii}$ is interpreted as zero for each $i$ and $x_{ji}=x_{ij}$. For $x\in \mathcal{P}$, define
\[
I_p(x) := \sum_{1\le i<j\le N} I_p(x_{ij}),
\]
where $I_p(x_{ij})$ is defined as in \eqref{ipdef1}. 
For each $u > 1$ define 
\[
\psi_p(u) := \inf\{I_p(x): x\in \mathcal{P} \text{ such that } t(H,x)\ge u\, \ee(t(H,G))\}\,.
\]
The following theorem shows that  for any $u>1$, 
\begin{align}
\pp\bigl(t(H,G)\ge u\, \ee(t(H,G))\bigr) &= \exp(-\psi_p(u)+\text{lower order terms})\, , \label{approx2}
\end{align}
provided that $N$ is large and $p$ is not too small. This approximation was proved for fixed $p$ and $N$ growing to infinity in \cite{cv1} using Szemer\'edi's lemma. Various interesting consequences of this variational formula were proved in~\cite{cv1, lz}. 
\begin{thm}\label{graphthm}
Take any finite simple graph $H$ and let $t(H,G)$ and $\psi_p$ be defined as above. Let $k$ be the number of vertices of $H$, $m$ be the number of edges of $H$, and $\Delta$ be the maximum degree of $H$. Let $X:= t(H,G)$. Suppose that $m\ge 1$ and $N^{-1/(m+3)}\le p \le 1-N^{-1}$. Then for any $u> 1$ and any $N$ sufficiently large (depending only on $H$ and $u$), 
\begin{align*}
1-\frac{c (\log N)^{b_1}}{N^{b_2} p^{b_3}} \le \frac{\psi_p(u)}{-\log \pp(X\ge u\, \ee(X))} \le 1+ \frac{C (\log N)^{B_1}}{N^{B_2} p^{B_3}} \, ,
\end{align*}
where $c$ and $C$ are constants that depend only on  $H$ and $u$, and 
\begin{align*}
&b_1 = 1\, , \  \  b_2 = \frac{1}{2m}\, , \ \ b_3 = \Delta\, ,\\
&B_1 = \frac{9+8m}{5+8m}\, , \  \  B_2 = \frac{1}{5+8m}\, , \ \ B_3 = \Delta - \frac{16m}{k(5+8m)}\, .
\end{align*}
\end{thm}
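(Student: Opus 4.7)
The plan is to apply Theorem \ref{uppertailthm} to the scaled subgraph-count function $f(x) := n\, t(H, x)$ on $[0,1]^n$ with $n = \binom{N}{2}$, after identifying $x$ with the symmetric edge array. Setting $t := u\, \ee(X)/n$, the event $\{X \ge u\,\ee(X)\}$ is precisely $\{f(Y) \ge tn\}$, and the rate function $\phi_p(t)$ from \eqref{phidef} coincides, up to asymptotically negligible corrections from non-injective homomorphisms, with $\psi_p(u)$. The two halves of Theorem \ref{graphthm} will then follow by estimating the smoothness and complexity terms of Theorem \ref{uppertailthm} sharply enough to produce the advertised exponents.

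The smoothness inputs are bookkeeping on a degree-$m$ multiaffine polynomial: differentiating $t(H,x)$ in $x_{ij}$ retains only monomials where some edge of $H$ maps onto $\{i,j\}$, giving $b_i \le 2m$ and $c_{ij} \le C_H/N$, while $a = n$. Using the clique-based upper estimate $\psi_p(u) = O(N^2 p^\Delta \log(1/p))$ from existing work \cite{cv1, lz}, one gets $K = O(p^\Delta \log(1/p))$, so that $\alpha = O(N^2 |\log p|)$, $\beta_i = O(|\log p|)$, and $\gamma_{ij} = O(1/N)$; a direct substitution shows the smoothness term of Theorem \ref{uppertailthm} is of order $N \sqrt{|\log p|}$, well inside the target budget.

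The central difficulty is building a low-cardinality net $\dd(\ep)$ satisfying \eqref{entropy}. The gradient component decomposes as
\[
\fpar{f}{x_{ij}}(x) = \sum_{e \in E(H)} r_{H,e}(x; i, j),
\]
where each rooted edge-deleted density $r_{H,e}(x; i, j)$ is a degree-$(m-1)$ polynomial in the remaining entries of $x$. The proposed approximation scheme is to partition $[N]$ into $M$ blocks and approximate each $r_{H,e}(x;\cdot,\cdot)$ by a function depending on $(i,j)$ only through their block labels, with the inter-block edge densities quantized on an $\ep$-grid; the discretized data is of total count at most $M^N (1/\ep)^{O(M^2)}$, so choosing $M$ a small polynomial in $1/\ep$ and $N$ gives $\log|\dd(\ep)| = o(\psi_p(u))$ throughout $p \ge N^{-1/(m+3)}$. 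The main obstacle is proving that such a blockwise approximation actually achieves $\ell^2$-error $\sqrt{n}\ep$ for \emph{every} $x \in \{0,1\}^n$, with $\ep$ as small as a polynomial in $1/N$ and $p$; this is a sparse, quantitative replacement for the counting lemma underlying all Szemer\'edi-based proofs, and it dictates the threshold $p \gg N^{-1/(m+3)}$.

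With the net in hand, the lower bound of Theorem \ref{graphthm} follows quickly from the lower half of Theorem \ref{uppertailthm}, since $\delta_0 = O(1/\sqrt{N})$ and $\ep_0 n = O(\sqrt{n}\,|\log p|)$, and a short Lipschitz estimate on $\psi_p$ at scale $\delta_0$ converts these into the claimed $(\log N)/(N^{1/(2m)} p^\Delta)$ relative error. For the upper bound I would optimize the parameters $\ep$ and $\delta$ jointly, balancing the complexity term (decreasing in $\ep$ once $\log|\dd(\ep)|$ is controlled), the smoothness term (decreasing in $\delta$), and the continuity loss $\psi_p(t-\delta) - \psi_p(t)$ (growing in $\delta$). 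A quantitative Lipschitz/continuity property of $\psi_p$, derived by adapting the graphon-convergence arguments of \cite{cv1} to the sparse regime, is needed to control that last piece; the resulting optimum yields the exponents $B_1, B_2, B_3$ and closes the proof.
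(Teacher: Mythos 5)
Your overall architecture (apply Theorem \ref{uppertailthm} to $f=T$ with $t\asymp u\,p^m$, bound $a$, $b_i$, $c_{ij}$ by direct differentiation, control $K$ via an explicit upper bound on $\psi_p$, use a Lipschitz estimate on $\psi_p$ and the Janson--Oleszkiewicz--Ruci\'nski bound $-\log\pp(X\ge u\ee X)\ge CN^2p^\Delta$ to convert to relative error, and optimize $\ep,\delta$) matches the paper. But the heart of the proof --- the construction of a set $\dd(\ep)$ satisfying \eqref{entropy} with $\log|\dd(\ep)|$ small --- is exactly where your proposal has a genuine gap, and the route you propose is one that cannot be completed. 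You suggest a blockwise (regularity-partition) approximation of the gradient, quantized on a grid, and you yourself flag ``the main obstacle'' as proving that such a partition achieves $\ell^2$-error $\sqrt{n}\,\ep$ for \emph{every} $x\in\{0,1\}^n$ with $\ep$ polynomially small in $N$. That obstacle is precisely the missing sparse counting lemma: with weak (Frieze--Kannan) regularity the number of parts needed for per-coordinate error $\ep$ is $M=2^{\Theta(1/\ep^2)}$, so for $\ep=N^{-c}$ your count $M^N(1/\ep)^{O(M^2)}$ gives $\log|\dd(\ep)|\gg N^2$, which makes the complexity term vacuous; and even for constant $\ep$ the dense counting lemma gives additive error $\ep$ per coordinate, which swamps the relevant densities of order $p^{m-1}$ in the sparse regime. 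This is the very reason the paper states that Szemer\'edi-type arguments cannot reach $p=N^{-\alpha}$.

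The paper's actual resolution is a spectral, not combinatorial, argument. Lemma \ref{exupper22} shows, via a telescoping decomposition into bilinear forms in $M(x)-M(y)$ (Lemmas \ref{comb1}--\ref{comb2}), that
\[
\sum_{i<j}\bigl(g_{ij}(x)-g_{ij}(y)\bigr)^2\le 8m^2k^2N\,\|x-y\|_{\textup{op}}\,,
\]
i.e.\ the gradient is controlled by the \emph{operator norm} of the difference of adjacency matrices. Lemma \ref{exupper21} then builds an operator-norm net of $[0,1]^n$ of size $e^{O(N\tau^{-2}\log(1/\tau))}$ by truncating the spectral decomposition at rank $O(\tau^{-2})$ (using $\lambda_i^2\le N^2/i$) and discretizing the singular vectors. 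Together these give $\log|\dd(\ep)|\le CN\ep^{-4}\log(\cdot)$, which is $O(N\,\mathrm{polylog})$ rather than $N^2$, and this is what produces the exponents $B_1,B_2,B_3$. Without this (or an equivalent) ingredient your proof does not close. Two smaller points: the paper's bound on $K$ is $Cp^{2m/k}\log N$ from the clique construction (Lemma \ref{phiupper}), not $O(p^{\Delta}\log(1/p))$, and this is what feeds into $B_3=\Delta-16m/(k(5+8m))$; and the continuity of $\psi_p$ is obtained by an elementary interpolation $x\mapsto x+\ep(1-x)$ toward the all-ones array together with convexity of $I_p$ (Lemma \ref{lipschitz}), not by adapting graphon-convergence arguments, which again are unavailable in the sparse regime.
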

For example, when $H$ is a triangle, an explicit computation of the  error terms shows that the approximation \eqref{approx2} holds whenever $p$ goes to zero at a rate slower than $N^{-1/42}(\log N)^{11/14}$. There is no reason to believe that this should be the optimal threshold for the validity of the approximation~\eqref{approx2}, but  at least it allows a polynomial rate of decay for $p$.

Shortly after the first draft of this paper was put up on arXiv, Lubetzky and Zhao \cite{lz2} explicitly computed by a remarkably clever argument the limiting behavior of $\psi_p(u)$ when $H$ is a triangle and $N^{-1}\ll p\ll 1$. With the aid of Theorem \ref{graphthm}, this completely solves the large deviation problem for triangle counts when $N^{-1/42}(\log N)^{11/14} \ll p\ll 1$. Combining the solution of Lubetzky and Zhao with Theorem \ref{graphthm} gives the following result:
\begin{thm}[Lubetzky and Zhao \cite{lz2}]\label{lzcor}
Let $T$ be the number of triangles in $G(N,p)$. Then for any fixed $\delta  > 0$,
\[
\pp(T\ge (1+\delta)\ee(T)) = \exp\biggl(-(1+o(1)) \min\biggl\{\frac{\delta^{2/3}}{2}\, , \, \frac{\delta}{3}\biggr\}N^2 p^2 \log \frac{1}{p}\biggr)
\]
when $N\ra \infty$ and $p\ra 0$, subject to the constraint that $p\ge N^{-1/42}\log N$. 
\end{thm}
As mentioned above, \cite[Theorem 1.1]{lz2} gives the explicit limiting behavior of $\psi_p(u)$ whenever $p$ goes to zero at a rate slower than $N^{-1}$. Therefore if one can prove a version of Theorem~\ref{graphthm} that allows $p$ to decay like $N^{-1+\ep}$, that would solve the problem of large deviations for triangle counts in its entirety.

More recently, Theorem \ref{lzcor} has been generalized by Bhattacharya et al.~\cite{bglz}, who got the following beautiful result by analyzing the variational formula of Theorem \ref{graphthm}. Take any finite simple graph $H$ with maximum degree~$\Delta$. Let $H^*$ be the induced subgraph of $H$ on all vertices whose degree in $H$ is $\Delta$. Recall that an independent set in a graph is a set of vertices such that no two are connected by an edge. Also, recall that a graph is called regular if all its vertices have the same degree, and irregular otherwise. Define a polynomial 
\[
P_{H^*}(x) := \sum_k i_{H^*}(k) x^k\,,
\] 
where $i_{H^*}(k)$ is the number of $k$-element independent sets in $H^*$. The main result of \cite{bglz} is the following. %Theorem \ref{lzthm} is a special case of this result. 
\begin{thm}[Bhattacharya et al.~\cite{bglz}]\label{bthm}
Let $H$ be a connected finite simple graph on $k$ vertices with maximum degree $\Delta \ge 2$. Then for any $\delta >0$, there is a unique positive number $\theta = \theta(H, \delta)$ that solves $P_{H^*}(\theta)=1+\delta$, where $P_{H^*}$ is the polynomial defined above. Let $H_{N,p}$ be the number of homomorphisms of $H$ into a $G(N,p)$ random graph. Then there is a constant $\alpha_H>0$ depending only on $H$, such that if $N\to \infty$ and $p\to 0$ slower than $N^{-\alpha_H}$, then for any~$\delta >0$,
\[
\pp(H_{N,p} \ge (1+\delta) \ee(H_{N,p})) = \exp\biggl(-(1+o(1))c(\delta) N^2 p^\Delta \log \frac{1}{p}\biggr)\,,
\]
where 
\[
c(\delta) = 
\begin{cases}
\min\{\theta,\frac{1}{2}\delta^{2/k}\} & \text{ if $H$ is regular,} \\
\theta &\text{ if $H$ is irregular.}
\end{cases}
\]
%Here $|V(H)|$ and $|E(H)|$ stand for the number of vertices and the number of edges in $H$. 
\end{thm}
The formula given in Theorem \ref{bthm} is more than just a formula. It gives a hint at the conditional structure of the graph, and at the nature of phase transitions as $\delta$ varies. Unlike the dense case, it is hard to give a precise meaning to claims about the conditional structure in the sparse setting due to the lack of an adequate sparse graph limit theory. For a detailed discussion, see \cite{bglz, cha16}.

The paper \cite{bglz} also gives a number of examples where the coefficient $c(\delta)$ in Theorem \ref{bthm} can be explicitly computed. For instance, if $H = C_4$, the cycle of length four, then
\[
c(\delta) = 
\begin{cases}
\frac{1}{2}\sqrt{\delta} &\text{ if } \delta<16,\\
-1+\sqrt{1+\frac{1}{2}\delta} &\text{ if } \delta \ge 16.
\end{cases}
\]
Theorem \ref{lzcor} is also a special case of Theorem \ref{bthm}.

The proof of Theorem \ref{graphthm} is a direct application of Theorem \ref{uppertailthm}. The main challenge lies in verifying the low complexity gradient condition. In the case of dense graphs, the condition may be verified using Szemer\'edi's lemma. But it turns out that Szemer\'edi's lemma is not a strict requirement for proving the low complexity gradient condition for subgraph counts. One can bypass that and use a spectral argument instead. The spectral argument generalizes easily to the sparse case. 

Incidentally, as already discussed in Subsection \ref{motiv}, 
%a lot is known about 
the rough order of probability upper tails for subgraph counts 
drew significant interest in the random graphs community for 
a long time (as indicated in \cite{JR02}). It was eventually determined in a series of papers by Vu \cite{vu01, vu02}, Kim and Vu \cite{kimvu00, kimvu04}, Janson and Ruci\'nski \cite{JR04} and finally by Janson, Oleszkiewicz and Ruci\'nski \cite{JOR04}. The upper and lower bounds obtained by these authors differed by a logarithmic factor; they were matched in \cite{chatriangle, dk} for triangle counts, and for counts of cliques in \cite{dk2}. The techniques of all of these papers, however, are only suitable for getting the tail 
decay order and a first-order approximation such as the one given in 
Theorem \ref{graphthm} is not achievable by these methods.

\subsection{Application to arithmetic progressions}
Fixing $n \in \N$ and $p \in (0,1)$, let $A$ be a random subset of $\zz/n\zz$, constructed by keeping each element with probability $p$, and dropping with probability $1-p$. In this subsection we apply Theorem \ref{uppertailthm} to compute large deviation probabilities for the number of three-term arithmetic progressions in $A$. One may be able to tackle longer arithmetic progressions via Theorem \ref{uppertailthm}, but this would 
require finding a better upper bound on its complexity term. 
\begin{thm}\label{arith}
Let $A$ be a random subset of $\zz/n\zz$, constructed as above. Let $X$ be the number of pairs $(i,j)\in (\zz/n\zz)^2$ such that $\{i,i+j, i+2j\}\subseteq A$. Let $I_p$ be defined as in \eqref{ipdef2} and define
\begin{align*}
\theta_p(u) := \inf\biggl\{&I_p(x): x\in [0,1]^{\zz/n\zz} \\
&\qquad  \textup{such that $\sum_{i,j\in \zz/n\zz} x_i x_{i+j} x_{i+2j}\ge u\, \ee(X)$}\biggr\}\, .
\end{align*}
Suppose that $n^{-1/162}\le p\le1-n^{-1}$. Then for any $u>1$,
\begin{align*}
1 - c\, n^{-1/6}p^{-6}\log n&\le \frac{\theta_p(u)}{-\log \pp(X\ge u\, \ee(X))} \le 1 + Cn^{-1/29}p^{-162/29}(\log n)^{33/29}\, ,
\end{align*}
where $C$ and $c$ are constants that may depend only on $u$.
\end{thm}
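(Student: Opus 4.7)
The plan is to apply Theorem \ref{uppertailthm} to the function
\[
f(x) = \sum_{i,j \in \zz/n\zz} x_i\, x_{i+j}\, x_{i+2j}, \quad x \in [0,1]^n,
\]
taking $t = u\,\ee(X)/n$, so that the desired probability $\pp(X \ge u\,\ee(X))$ is exactly $\pp(f(Y) \ge tn)$. The smoothness quantities are elementary: each $x_k$ participates in $O(n)$ cubic monomials and each prescribed pair $\{k,\ell\}$ lies in $O(1)$ distinct 3-APs, giving $a = O(n^2)$, $b_k = O(n)$, and $c_{k\ell} = O(1)$. Feeding these into the smoothness term of Theorem \ref{uppertailthm}, together with the size $\theta_p(u) \asymp n p^2 \log(1/p)$ of the rate function, will yield a contribution of smaller order than $\theta_p(u)$ provided $p$ does not decay too fast.

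The heart of the argument is verifying the low-complexity gradient condition: producing a finite set $\dd(\epsilon) \subset \rr^n$ such that for every $x \in \{0,1\}^n$ some $d \in \dd(\epsilon)$ approximates $\nabla f(x)$ within $\sqrt{n}\,\epsilon$ in $\ell^2$, with $\log|\dd(\epsilon)|$ as small as possible. Direct differentiation gives
\[
f_k(x) = \sum_{d \in \zz/n\zz} \bigl(x_{k+d}\, x_{k+2d} + x_{k-d}\, x_{k+d} + x_{k-2d}\, x_{k-d}\bigr),
\]
so each coordinate of $\nabla f(x)$ is a sum of three 3-AP autocorrelations of $x$. I would attack this via Fourier analysis on $\zz/n\zz$: writing $\hat x(\xi) = \sum_\ell x_\ell\, \omega^{-\ell\xi}$ with $\omega = e^{2\pi\I/n}$, a short computation shows that each autocorrelation, as a function of $k$, has $\xi$-Fourier coefficient $\hat x(\alpha\xi)\,\hat x(\beta\xi)$ for small integers $\alpha,\beta$ depending on which autocorrelation. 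For a threshold $\tau > 0$, set $S(x,\tau) = \{\xi : |\hat x(\xi)| \ge \tau\}$; Parseval yields $|S(x,\tau)| \le n^2/\tau^2$. Replacing $\hat x$ by its restriction to $S(x,\tau)$ and recomputing the autocorrelations produces an $\ell^2$ approximation of $\nabla f(x)$ whose error is controlled by $\tau$ via Plancherel. The set $\dd(\epsilon)$ is then built by enumerating the choice of the frequency set $S \subseteq \zz/n\zz$ and, for each $\xi \in S$, a discretization of $\hat x(\xi)$ as a complex number of modulus at most $n$ on a fine enough grid, giving a bound of the form $\log|\dd(\epsilon)| \lesssim |S|\log(n/\epsilon)$ up to polylogarithmic factors.

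The main obstacle will be the three-way optimization over the Fourier threshold $\tau$, the discretization scale, and the tolerance $\epsilon$: the approximation error from truncation must be below the $\sqrt n\,\epsilon$ budget, the discretization of each kept Fourier coefficient must sit within the same budget, and both must be balanced against the rescaling $\epsilon \mapsto \delta\epsilon/(4K)$, with $K = \phi_p(t)/n$, dictated by the complexity term in Theorem \ref{uppertailthm}. The complexity term, the smoothness term, and the slack $\delta$ all have to remain strictly smaller than $\theta_p(u) \asymp n p^2 \log(1/p)$; carrying out this optimization produces the explicit exponents $1/6, 6$ and $1/29, 162/29, 33/29$ appearing in the bounds, and the critical density $p \ge n^{-1/162}$ below which the complexity term would swamp the main rate. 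The lower bound in Theorem \ref{arith} is the easier half, coming directly from the second part of Theorem \ref{uppertailthm} with the elementary smoothness bounds plugged in. The overall structure parallels the proof of Theorem \ref{graphthm}, with Fourier analysis on $\zz/n\zz$ playing the role of the spectral decomposition of the adjacency matrix that encodes the gradient for subgraph counts.
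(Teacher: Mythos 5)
Your plan follows the paper's proof essentially step for step: apply Theorem \ref{uppertailthm} with the elementary bounds on $a$, $b_i$, $c_{ij}$, verify the low-complexity gradient condition by thresholding the discrete Fourier transform of $x$ (with Parseval bounding the number of retained frequencies by $O(n^2/\tau^2)$) and discretizing the retained coefficients, then optimize $\ep$, $\delta$ and the threshold against the a priori size of the rate function --- this is exactly Lemmas \ref{fourier} and \ref{ade} and the surrounding computation, and your lower-bound remark matches the paper's as well. The only ingredients you leave implicit are routine: the Lipschitz-type bound $\phi_p(t-\delta)\ge \phi_p(t)-C\delta^{1/3}n\log(1/p)$ needed to absorb the slack $\delta$, and the Hoeffding bound $-\log \pp(X\ge u\,\ee(X))\ge c\,np^6$ needed to pass from additive errors to the ratio form of the statement.
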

This theorem gives an approximation for the upper tail of the number of three-term arithmetic progressions in random subsets of $\zz/n\zz$, even when the random subset is allowed to be somewhat sparse ($p\gg n^{-1/162}(\log n)^{33/162}$). Note that with $p=1/2$, the upper tail probability is proportional to the number of subsets of $\zz/n\zz$ that contain more than a given number of three-term progressions.

Again, the main challenge in the proof of Theorem \ref{arith} is in establishing the low complexity gradient condition. Discrete Fourier transform techniques are used to prove that this condition holds for the function $f$ defined above. We believe that the low complexity gradient condition should apply for longer arithmetic progressions, as it may be expected to hold in any situation where some kind of ``averaging'' is going on; if true, this would
extend our solution to longer progressions. 

The study of arithmetic progressions in subsets of integers has a long and storied history, most of which is concerned with questions of existence. An excellent survey of old and new results is available in Tao and Vu \cite{taovu}. Counting the number of sets with a given number of arithmetic progressions, or understanding the typical structure of sets that contain lots of progressions, are challenges of a different type, falling within the purview of large deviations theory. Recently a certain amount of interest has begun to grow around the resolution of such questions, quickly leading to the realization that conventional large deviations theory will not provide the answers. The most pertinent papers are the recent articles on probabilistic properties of the so-called ``non-conventional averages'' by Kifer \cite{kifer}, Kifer and Varadhan~\cite{kv, kv2} and Carinci et al.~\cite{carinci}. In particular, Carinci et al.~\cite{carinci} prove a large deviation principle for what they call ``two-term arithmetic progressions'', which are sums of the type $\sum x_i x_{2i}$.

\subsection{Approximation of normalizing constants}\label{normalizing}
Let $f$ be as in Subsection \ref{mainsec}. Consider a probability measure on $\{0,1\}^n$ that puts mass proportional to $e^{f(x)}$ at each point $x$. The logarithm of the normalizing constant of this probability measure, sometimes called the ``free energy'', is
\[
F := \log  \sum_{x\in \{0,1\}^n} e^{f(x)}\, .
\]
The free energy is an important object in statistical physics. In this context, the probability measure defined above is called the ``Gibbs measure'' with Hamiltonian $f$. The free energy encodes useful information about the structure of the Gibbs measure: it is often used to compute the Gibbs averages of various quantities of interest by differentiating the free energy with respect to appropriate parameters.  Computation of normalizing constants is also important in statistics because it is required for computing  maximum likelihood estimates of unknown parameters.

For $u\in [0,1]$, define
\[
I(u) := u\log u + (1-u)\log (1-u)\, .
\]
For $x = (x_1,\ldots, x_n)\in [0,1]^n$, let
\[
I(x) := \sum_{i=1}^n I(x_i)\, .
\]
The goal of this subsection is to investigate conditions on $f$ under which the approximation
\begin{align*}
F = \sup_{x\in [0,1]^n} (f(x)-I(x)) + \text{lower order terms}
\end{align*}
is valid. As expected from the general connection between large deviations and moment generating functions given by Varadhan's lemma (see in \cite{dembozeitouni}), the validity of the above approximation is closely related to that of \eqref{mainapprox}. Incidentally, it is easy to verify that exact equality holds in the above display (without any lower order correction terms) if $f$ is linear.
\begin{thm}\label{freethm}
Let $F$ be defined as above, and $a$, $b_i$, $c_{ij}$ and $\dd(\ep)$ be as in Theorem \ref{uppertailthm}. Then for any $\ep>0$,
\begin{align*}
F &\le \sup_{x\in[0,1]^n}(f(x)-I(x)) + \textup{complexity term} + \textup{smoothness term}\, ,
\end{align*}
where
\begin{align*}
\textup{complexity term} &= \frac{1}{4}\Big(n\sum_{i=1}^nb_i^2\Big)^{1/2}\ep + 3n\ep +\log |\dd(\ep)|, \ \text{ and}\\
%\end{align*}
%and
%\begin{align*}
\textup{smoothness term} &= 4\biggl(\sum_{i=1}^n(ac_{ii} + b_i^2)+ \frac{1}{4}\sum_{i,j=1}^n \bigl(ac_{ij}^2+b_i b_j c_{ij} + 4b_i c_{ij}\bigr)\biggr)^{1/2} \\
&\qquad + \frac{1}{4}\Big(\sum_{i=1}^n b_i^2\Big)^{1/2}\Big(\sum_{i=1}^n c_{ii}^2\Big)^{1/2} + 3\sum_{i=1}^n c_{ii}+ \log 2.
\end{align*}
Moreover, $F$ satisfies the lower bound
\[
F \ge \sup_{x\in [0,1]^n} (f(x)-I(x)) - \frac{1}{2}\sum_{i=1}^n c_{ii}\, .
\]
\end{thm}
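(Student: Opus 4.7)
Let $x^* \in [0,1]^n$ be a maximizer of $f - I$ and let $\mu$ denote the product Bernoulli measure on $\{0,1\}^n$ with marginals $x_1^*,\ldots,x_n^*$. The Gibbs variational inequality (equivalently, Jensen applied to $e^F = \sum_x \mu(x) e^{f(x)}/\mu(x)$) gives
\[
F \geq \ee_\mu[f(Y)] + H(\mu) = \ee_\mu[f(Y)] - I(x^*).
\]
To compare $\ee_\mu[f(Y)]$ with $f(x^*)$, I would replace $Y_1,\ldots,Y_n$ by $x_1^*,\ldots,x_n^*$ one coordinate at a time, applying a one-variable second-order Taylor expansion in the coordinate being swapped; since $\ee_\mu[Y_k] = x_k^*$, the linear term vanishes, and the $k$-th remainder is bounded by $\frac{1}{2} c_{kk} \ee_\mu[(Y_k-x_k^*)^2] = \frac{1}{2} c_{kk} x_k^*(1-x_k^*)$. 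Summing yields $\ee_\mu[f(Y)] \geq f(x^*) - \frac{1}{2}\sum_k c_{kk}$, which produces the stated lower bound.

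\textbf{Upper bound.} This is the substantive half, and I would prove it by running the same machinery as in Theorem \ref{uppertailthm}, applied directly to the partition function $Z = \sum_{x \in \{0,1\}^n} e^{f(x)}$ rather than to a tail probability. For each $d \in \dd(\ep)$ set
\[
A_d = \bigl\{x \in \{0,1\}^n : \sum_{i=1}^n (f_i(x) - d_i)^2 \leq n\ep^2 \bigr\};
\]
by the defining property of $\dd(\ep)$ these sets cover $\{0,1\}^n$, so a union bound gives $Z \leq |\dd(\ep)| \max_d \sum_{x \in A_d} e^{f(x)}$. Fix $d$ and a reference point $x^{(d)} \in A_d$; a second-order Taylor expansion of $f$ about $x^{(d)}$, combined with the gradient-matching bound on $A_d$, yields $f(x) \leq f(x^{(d)}) + d^T(x - x^{(d)}) + \mathrm{(error)}$, the error being controlled by $\ep$ together with the $b_i$ and the $c_{ij}$. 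Extending the sum from $A_d$ to all of $\{0,1\}^n$ factors the resulting linear exponential sum as $\prod_i (1 + e^{d_i})$, and the Legendre-duality identity
\[
\log \prod_i (1 + e^{d_i}) = \sup_{y \in [0,1]^n} \bigl( d^T y - I(y) \bigr)
\]
converts this to $\sup_y(f(x^{(d)}) + d^T(y - x^{(d)}) - I(y))$. A final Taylor expansion comparing $f(x^{(d)}) + d^T(y - x^{(d)})$ with $f(y)$ reduces the resulting bound to $\sup_y(f(y) - I(y))$ plus the promised smoothness error.

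\textbf{Main obstacle.} The hard part is arranging the various errors into the precise form stated. The smoothness term collects three contributions: the quadratic Taylor remainder of $f$ about $x^{(d)}$ (producing the $c_{ii}$ and $a c_{ij}^2$ pieces), the discrepancy between $d$ and $\nabla f(x^{(d)})$ on $A_d$ (mediated by $\ep$ and producing the $b_i c_{ij}$ cross-terms), and the Hessian correction in the Legendre-dual comparison step. Relative to Theorem \ref{uppertailthm}, the constants $\alpha, \beta_i, \gamma_{ij}$ collapse to $a, b_i, c_{ij}$ because here there is no tail level $t$ and no tilt parameter $\delta$ to amplify the bounds; correspondingly the complexity term evaluates $\dd$ at $\ep$ rather than $\delta\ep/(4K)$. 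An alternative route, making the link to Theorem \ref{uppertailthm} more direct, would be to apply that theorem with $p = 1/2$ to the tail of $f(Y)$ and then recover $F = n \log 2 + \log \ee[e^{f(Y)}]$ by a layer-cake (Varadhan-type) integration; however, the error terms obtained that way are looser than those produced by the direct argument sketched above.
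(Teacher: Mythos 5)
Your lower bound is correct and is essentially the paper's argument: Jensen's inequality for the product measure with marginals $x^*$, plus a coordinate-wise comparison of $\ee f(Y)$ with $f(x^*)$ costing $\tfrac12\sum_i c_{ii}$. (The paper runs the comparison as a single path integral from $y$ to $Y$ and uses the orthogonality $\ee[(Y_i-y_i)h(Y^{(i)})]=0$, but the estimate is the same.)

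The upper bound has a genuine gap at its central step. You fix a reference point $x^{(d)}\in A_d$ and claim $f(x)\le f(x^{(d)})+d^T(x-x^{(d)})+\mathrm{error}$ on $A_d$ by Taylor expansion. But $A_d$ is defined by a condition on the \emph{gradient}, not on $x$: it is not localized in the cube, and two of its points can be at Hamming distance $n$. The second-order remainder between such points is of order $\sum_{i,j}c_{ij}$, and the first-order (path-integral) version requires controlling $\nabla f$ at intermediate points $x_s$ that need not lie in $A_d$ at all --- knowing that the gradients agree at the two endpoints says nothing about $f(x)-f(x^{(d)})-d\cdot(x-x^{(d)})$ (compare $\sin$ at $0$ and $2\pi$). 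Neither error is of the form appearing in the stated smoothness term, and in the applications both are of the same order as the main term: for subgraph counts $\sum_{i,j}c_{ij}\asymp N^2\asymp n$, while the theorem's smoothness term is $O(N^{3/2})$. The same defect recurs in your last step, since the maximizer $y$ of $d^Ty-I(y)$ has no reason to be near $x^{(d)}$ or in $A_d$.

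What is missing is precisely the paper's main device. It introduces the conditional-mean map $\hx_i(x)=(1+e^{-\Delta_if(x)})^{-1}$, uses the exact identity $g(x,\hx)-I(\hx)=\sum_i(x_i-\hx_i)\Delta_if(x)$, and proves via two second-moment lemmas (exploiting $\ee[(X_i-\HX_i)h(X^{(i)})]=0$ under the Gibbs measure itself) that $f(X)\approx f(\HX)$ and $g(X,\HX)\approx I(\HX)$ on an event of probability $\ge 1/2$; this replaces $e^{f(x)}$ by $e^{f(\hx)-I(\hx)+g(x,\hx)}$ with an additive error given by the square roots in the smoothness term. Only then does the cover $\dd(\ep)$ enter, and it is used to replace $\hx(x)$ by a nearby fixed $p(d)$ --- a comparison between two points at $\ell^2$ distance $O(\sqrt n\,\ep)$, where Lipschitz bounds in $b_i$ and $c_{ii}$ do suffice. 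Your final normalization $\sum_xe^{d^Tx}=\prod_i(1+e^{d_i})$ is indeed equivalent to the paper's $\sum_xe^{g(x,p)}=1$, so the endgame of your outline is right; but the bridge from $f(x)$ to an affine-plus-small-error expression is the heart of the theorem, and the bridge you propose does not hold it.
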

Just like Theorem \ref{uppertailthm}, it is unlikely that the error terms in Theorem \ref{freethm} are sharp. Still, it is the first result of its kind and good enough to be applicable in some examples of interest.

Actually, Theorem \ref{uppertailthm} is proved in this paper as a special application Theorem \ref{freethm}. To see how this is done, take a function $g:[0,1]^n \ra\rr$ and a threshold $t\in \rr$. Let $f$ be a smooth function such that
\[
f(x) =
\begin{cases}
0 & \text{ if } g(x)\ge tn\, ,\\
\text{a large negative number} &\text{ if } g(x)< tn - \text{a small quantity.}
\end{cases}
\]
Then $e^{f(x)}$ is a smooth approximation to the function that is $1$ when $g(x)\ge tn$ and $0$ when $g(x)< tn$. One may now try to apply Theorem \ref{freethm} with this $f$ to find an approximation to $\pp(g(Y)\ge tn)$. This strategy is similar to the one used in Bryc's proof of the inverse Varadhan lemma (see \cite[Section 4.4]{dembozeitouni}). 

The usual large deviation technique of obtaining optimal upper bounds using moment generating functions does not seem to work for sparse random graphs. This is the reason why the above scheme is needed for deriving Theorem \ref{uppertailthm} from Theorem \ref{freethm}. This is also the main reason why the error bound in Theorem~\ref{uppertailthm} is somewhat lossy, leading to the suboptimal conditions on the decay rate of $p$ in Theorems \ref{lzcor} and~\ref{bthm}.

The above sketch indicates that Theorem \ref{freethm} is much more general than Theorem \ref{uppertailthm}. Indeed, using a similar tactic it may be used for computing joint large deviations for several functions simultaneously, although we will not pursue this direction here. 

\subsection{Application to exponential random graphs}
In this section we will use the notation of Subsection \ref{subgraphsec}. Let $l$ be a positive integer and $H_1,\ldots, H_l$ be finite simple graphs. Let $\beta_1,\ldots, \beta_l$ be $l$ real numbers. Let $N$ be another positive integer. Given a simple graph $G$ on $N$ vertices, let $t(H, G)$ denote, as in Subsection \ref{subgraphsec}, the homomorphism density of $H$ in $G$. 

Consider the probability measure on the set of all simple graphs on $N$ vertices that puts mass proportional to 
\[
\exp\bigl(N^2(\beta_1 t(H_1, G)+\cdots + \beta_l t(H_l, G))\bigr)
\]
on each graph $G$. This is an example of an exponential random graph model (ERGM). Such models are widely used in the statistical social networks community to understand the structure of networks. One of the key objectives of the practitioners is to compute estimates of the parameters $\beta_1,\ldots,\beta_l$ from an observed graph, which they assume is drawn from this model. The most popular approach to estimation is the maximum likelihood method. To implement this method, however, one needs to know the normalizing constant of the probability measure. 

Until recently, the only available techniques for approximating the normalizing constants of such probability measures all relied on Markov Chain Monte Carlo (MCMC) methods. There are some doubts about the accuracy of such approximations, as pointed out in \cite{bbs}.  The mathematical problem was solved in \cite{cd} where it was shown that if $Z_N$ is the normalizing constant, then as $N$ goes to infinity (keeping $\beta_1,\ldots, \beta_l$ fixed), 
\[
\frac{\log Z_N}{N^2} \approx \sup_{x\in \mathcal{P}_N} \biggl(\beta_1t(H_1, x)+\cdots+\beta_l t(H_l, x) - \frac{I(x)}{N^2}\biggr) =: L_N\, ,
\]
where $\mathcal{P}_N$ denotes the set $\mathcal{P}$ defined in Subsection \ref{subgraphsec}, that is, the set of all $x = (x_{ij})_{1\le i<j\le N}$ with $x_{ij}\in [0,1]$ for all $i,j$. Here the approximation sign means that the difference between the two sides tends to zero. The proof of this theorem is based on the large deviation principle for Erd\H{o}s-R\'enyi graphs from \cite{cv1}. Since this argument is based on Szemer\'edi's lemma, it does not give error bounds that are better than some negative power of $\log^* N$. Another problem is that this result does not allow varying the $\beta$'s with $N$, making it inapplicable for sparse exponential random graphs. 

Theorem \ref{freethm} solves both problems to a certain extent, by giving a concrete error bound.
\begin{thm}\label{ergmthm}
Let $Z_N$ and $L_N$ be as above. Let $B := 1+|\beta_1|+\cdots +|\beta_l|$. Then 
\begin{align*}
-cBN^{-1}&\le \frac{\log Z_N}{N^2} - L_N \\
&\le CB^{8/5} N^{-1/5}(\log N)^{1/5}\biggl(1+\frac{\log B}{\log N}\biggr) + C B^2 N^{-1/2}\, ,
\end{align*}
where $C$ and $c$  are  constants that may depend only on $H_1,\ldots, H_l$.
\end{thm}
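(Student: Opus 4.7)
The plan is to apply Theorem \ref{freethm} directly to
\[
f(x) := N^2 \sum_{i=1}^l \beta_i t(H_i, x), \qquad x \in [0,1]^n, \quad n = \binom{N}{2},
\]
after identifying $\{0,1\}^n$ with the set of simple graphs on $N$ vertices. Under this identification $\log Z_N = F$, and by the definition of $L_N$ one has $\sup_{x \in [0,1]^n}(f(x) - I(x)) = N^2 L_N$, so the two inequalities of Theorem \ref{freethm} divided by $N^2$ give precisely the form of the two-sided bound claimed here; what remains is to estimate the error terms.

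Routine counting of how many vertices of each $H_i$ are pinned when one or two edge-variables are fixed yields $a = O(BN^2)$, $b_e = O(B)$, $c_{ee} = O(B/N)$, and $c_{ee'} = O(B/N)$ if $e \cap e' \ne \emptyset$ and $O(B/N^2)$ otherwise, with constants depending only on $H_1, \ldots, H_l$. Using that there are $O(N^2)$ edges, $O(N^3)$ ordered pairs sharing a vertex and $O(N^4)$ disjoint pairs, the big square-root in the smoothness term of Theorem \ref{freethm} is dominated by $\sum_{e,e'} a\, c_{ee'}^2 = O(B^3 N^3)$, so the smoothness term is $O(B^2 N^{3/2})$ and contributes the $C B^2 N^{-1/2}$ piece after division by $N^2$. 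The lower bound in Theorem \ref{freethm} uses only $\sum_e c_{ee} = O(BN)$, giving $(\log Z_N)/N^2 \ge L_N - O(B/N)$ and matching the $-cBN^{-1}$ term.

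The hard part is the low-complexity gradient condition: I must construct $\dd(\ep) \subset \rr^n$ with $\log|\dd(\ep)| = O\!\left( N (B/\ep)^4 \log(NB/\ep)\right)$ such that every $x \in \{0,1\}^n$ admits $d \in \dd(\ep)$ with $\sum_e (f_e(x) - d_e)^2 \le n \ep^2$. For a fixed $H_i$, the derivative $\partial t(H_i, x)/\partial x_{ij}$ is a sum over edges $\{r,s\} \in E(H_i)$ of $N^{-|V(H_i)|}$ times the number of homomorphisms of $H_i$ sending $\{r,s\}$ onto $\{i,j\}$; in the triangle case it collapses to $3 N^{-3}(X^2)_{ij}$, where $X = (x_{ij})$ is the symmetric edge-weight matrix. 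My approach is to spectrally truncate $X$ at a threshold governed by $\ep/B$, keeping only eigenvectors with sufficiently large eigenvalues, and to discretize each retained eigenvector coordinatewise on a grid of mesh of order $\ep/(BN)$; Weyl-type perturbation bounds on the polynomial functions of $X$ that appear in $\nabla f$ then show the resulting gradient approximants are within $\ep \sqrt{n}$ in $\ell^2$, and the number of such encodings is of the desired form. Substituting into the complexity term of Theorem \ref{freethm} and dividing by $N^2$ yields an expression of order $B\ep + (B/\ep)^4 \log(NB/\ep)/N$; choosing $\ep$ with $\ep^5 \asymp B^3 (\log N)/N$ balances the two summands and produces the leading error $CB^{8/5} N^{-1/5}(\log N)^{1/5}$, with the $(1 + \log B/\log N)$ factor absorbing the residual $\log(NB/\ep)$.
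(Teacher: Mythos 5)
Your proposal is correct and follows essentially the same route as the paper: apply Theorem \ref{freethm} to $f=\sum_r \beta_r T_r$ with the derivative bounds of Lemma \ref{test1}, build the gradient net by low-rank spectral truncation in operator norm (the content of Lemmas \ref{exupper21}--\ref{test2}), and balance the complexity term with $\ep \asymp (B^3\log N/N)^{1/5}$. The only step needing more care than your sketch suggests is passing from operator-norm closeness of the edge-weight matrices to $\ell^2$ closeness of the gradients for a general $H_i$ (not just triangles), which the paper handles with the telescoping bilinear-form argument of Lemmas \ref{comb1}--\ref{comb2} rather than a direct Weyl-type perturbation bound on matrix polynomials.
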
 
As an example, consider the case where $l=2$, $H_1$ is a single edge, and $H_2$ is a triangle. In this case the above theorem shows that the difference between $N^{-2}\log Z_N$ and $L_N$ tends to zero as long as $|\beta_1|+|\beta_2|$ grows slower than $N^{1/8}(\log N)^{-1/8}$, thereby allowing a small degree of sparsity. When the $\beta$'s are fixed, it provides an approximation error bound of order $N^{-1/5}(\log N)^{1/5}$, substantially better than the negative powers of $\log^*N$ given by Szemer\'edi's lemma. However, the error bound is probably suboptimal. It is an interesting challenge to figure out a sharp error bound.

As mentioned in the previous subsection, it is in general not possible to pass from estimates for exponential random graphs to large deviations for Erd\H{o}s--R\'enyi graphs by optimizing over the parameters. In fact, this is not possible even in the dense setting. The reason is that exponential random graphs have discontinuous phase transitions --- as the parameters vary, the structure of the graph changes abruptly, missing out a range of intermediate structures.  For details, see \cite{cd}.

\subsection{Open problems}
The following is a partial list of questions that are currently beyond the reach of the theory presented in this manuscript, but may be solvable by a more refined theory.
\begin{enumerate}%[1.]
\item Improve Theorem \ref{uppertailthm}, so that results like Theorem~\ref{lzcor} and Theorem \ref{bthm} can be proved when $p$ tends to zero at an optimal rate.
\item As an example of the above, show that Theorem~\ref{lzcor} holds when $p\ra 0$ slower than $n^{-1/2}$.
\item Develop a sparse regularity lemma and a sparse graph limit theory that is powerful enough to prove results like Theorem~\ref{lzcor}. In fact, a reasonable test for the completeness of a sparse graph limit theory is whether it can lead to a solution of the large deviation question for sparse Erd\H{o}s--R\'enyi random graphs. This is because analyzing the large deviation behavior of $G(N,p)$ for small $p$ requires a full understanding of {\it all} possible sparse graph structures rather than focusing a small subset of graphs with nice properties.
\item Extend the large deviation results for three-term arithmetic progressions (Theorem \ref{arith}) to longer progressions. In this paper, discrete Fourier analysis is used for the analysis of three-term progressions. The method does not seem to extend easily to longer progressions. It is possible that higher order Fourier analysis (Gowers norms) or a sparse hypergraph regularity lemma may be needed for longer progressions.
\item Find explicit solutions to the variational problems coming from arithmetic progressions, in the spirit of Theorems~\ref{lzcor} and \ref{bthm}.
\item Improve the result for exponential random graphs (Theorem \ref{ergmthm}) so that sparser graphs can be handled.
\end{enumerate}

\section{Proof sketch}
In this section we give a sketch of the main ideas behind the proof of Theorem \ref{freethm} and the main ideas behind the proof of the low complexity gradient condition for subgraph counts (which is the key ingredient in the proof of Theorem \ref{graphthm}). Note that we have already sketched how Theorem \ref{uppertailthm} follows from Theorem \ref{freethm} in Subsection \ref{normalizing}.

We will generally denote the $i$th coordinate of a vector $x\in \rr^n$ by $x_i$. Similarly, the $i$th coordinate of a random vector $X$ will be denoted by $X_i$. 

Let $X=(X_1,\ldots, X_n)$ be a random vector that has probability density proportional to $e^{f(x)}$ on $\{0,1\}^n$ with respect to the counting measure.  For each $i$, define a function $\hx_i:[0,1]^n \ra [0,1]$ as
\[
\hx_i(x) = \ee(X_i\mid X_j = x_j, \, 1\le j\le n, \ j\ne i). 
\]
Let $\hx:[0,1]^n \ra[0,1]^n$ be the vector-valued function whose $i$th coordinate function is $\hx_i$. 

Let $\HX = \hx(X)$. The first step in the proof is to show that if the smoothness term in Theorem~\ref{freethm} is small, then 
\begin{equation}\label{main1}
f(X) \approx f(\HX) \ \text{ with high probability.}
\end{equation}
(We will not bother to make precise the meaning of $\approx$ in this sketch.) 
To show this, define $D := f(X)-f(\HX)$ and
\[
h(x) := f(x) - f(\hx(x)),
\]
so that $D = h(X)$. For $t\in [0,1]$ and $x\in [0,1]^n$, let 
\begin{align*}
u_i(t, x) &:= f_i(tx +(1-t)\hx(x)), 
\end{align*}
so that
\[
h(x) = \int_0^1 \sum_{i=1}^n (x_i-\hx_i(x)) u_i(t,x)\, dt. 
\]
Thus,
\begin{align}\label{d2o}
\ee(D^2) &= \int_0^1 \sum_{i=1}^n \ee((X_i-\HX_i) u_i(t, X) D) \, dt\, . 
\end{align}
Let $X^{(i)}$ denote the random vector $(X_1,\ldots, X_{i-1}, 0, X_{i+1}, \ldots, X_n)$. Let $D_i := h(X^{(i)})$. Then note that $ u_i(t, X^{(i)}) D_i$ is a function of the random variables $(X_j)_{j\ne i}$ only. Therefore by the definition of $\HX_i$, 
\[
\ee((X_i-\HX_i) u_i(t, X^{(i)}) D_i) = 0. 
\]
Thus, 
\begin{align*}
&\ee((X_i-\HX_i) u_i(t, X) D) \\
&= \ee((X_i-\HX_i) u_i(t, X) D)  - \ee((X_i-\HX_i) u_i(t, X^{(i)}) D_i)\, . 
\end{align*}
If the smoothness term is small, then one can show that $u_i(t, X) \approx u_i(t, X^{(i)})$ and $D \approx D_i$. Therefore, the left-hand side of the above identity is close to zero. By \eqref{d2o}, this proves the approximation \eqref{main1}. 

Define a function $g:[0,1]^n \times [0,1]^n \ra\rr$ as 
\[
g(x,y) := \sum_{i=1}^n (x_i \log y_i + (1-x_i)\log (1-y_i))\, .
\]
By a similar argument as above, it is possible to show that if the smoothness term is small, then 
\begin{equation}\label{main2}
g(X, \HX) \approx g(\HX, \HX) = I(\HX)\, .
\end{equation}
Armed with \eqref{main1} and \eqref{main2}, the proof of Theorem \ref{freethm} may be completed as follows. Let $A$ be the set of all $x$ where $f(x) \approx f(\hx(x))$ and $g(x, \hx(x)) \approx I(\hx(x))$.  By \eqref{main1} and \eqref{main2}, $X\in A$ with high probability. 
That is, 
\[
\frac{\sum_{x\in A} e^{f(x)}}{\sum_{x\in \{0,1\}^n} e^{f(x)}} \approx 1\, .
\]
Therefore by the definition of the set $A$,
\begin{align}
F &= \log \sum_{x\in \{0,1\}^n} e^{f(x)}\approx \log \sum_{x\in A} e^{f(x)}  \approx \log \sum_{x\in A} e^{f(\hx(x)) -I(\hx(x))+g(x, \hx(x))}\, .\label{fineq1o}
\end{align}
The above display is the key to the proof of Theorem \ref{freethm}. It was pointed out to us by Alex Zhai that one way to understand this approximation is to see $f(\hx(x)) -I(\hx(x))+g(x, \hx(x))$ as an approximately piecewise linear proxy for  $f(x)$.

Now let $\ep$ be a small positive number, close to zero. Using the set $\dd(\ep)$, it is easy to produce a set $\dd'(\ep)\subseteq [0,1]^n$ such that $|\dd(\ep)|=|\dd'(\ep)|$, and $\dd'(\ep)$ is an $\ep$-net for the image of $[0,1]^n$ under the map $\hx$. That is, for each $x$ there exists $p\in \dd'(\ep)$ such that
\[
\sum_{i=1}^n (\hx_i(x)-p_i)^2 \le \ep^2 n\, .
\]
We will say that $\hx(x)\approx p$.  For each $p\in \dd'(\ep)$ let $\mathcal{P}(p)$ be the set of all $x\in \{0,1\}^n$ such that $\hx(x)\approx p$. Then  
\begin{align}
&\log \sum_{x\in A} e^{f(\hx(x))-I(\hx(x)) + g(x, \hx(x) )} \label{fineq2o}\\
&\le \log \sum_{p\in \dd'(\ep)}\sum_{x\in \mathcal{P}(p)} e^{f(\hx(x))-I(\hx(x)) + g(x, \hx(x))} \nonumber \\
&\approx \log \sum_{p\in \dd'(\ep)}\sum_{x\in \mathcal{P}(p)} e^{f(p)-I(p) + g(x,p)}. \nonumber
\end{align}
The crucial observation is that for any $p\in [0,1]^n$, 
\[
\sum_{x\in \{0,1\}^n} e^{g(x,p)} = 1. 
\]
Thus, 
\begin{align}
 \log \sum_{p\in \dd'(\ep)}\sum_{x\in \mathcal{P}(p)} e^{f(p)-I(p) + g(x,p)} &\le    \log \sum_{p\in \dd'(\ep)} e^{f(p)-I(p)}\label{fineq3o}\\
 &\le \log|\dd'(\ep)| + \sup_{p\in [0,1]^n} (f(p)-I(p)). \nonumber
\end{align}
Combining \eqref{fineq1o}, \eqref{fineq2o} and \eqref{fineq3o} completes the proof sketch for the upper bound in Theorem \ref{freethm}. 

The proof of the lower bound may be sketched as follows. Take any $y\in [0,1]^n$. Let $Y = (Y_1,\ldots, Y_n)$ be a random vector with independent components, where $Y_i$ is a $Bernoulli(y_i)$ random variable. Then by Jensen's inequality,
\begin{align*}
\sum_{x\in \{0,1\}^n} e^{f(x)} &= \sum_{x\in \{0,1\}^n} e^{f(x)-g(x,y)+g(x,y)}\\
&= \ee(e^{f(Y)-g(Y,y)})\\
&\ge \exp(\ee(f(Y) - g(Y,y)))\\
&= \exp(\ee(f(Y)) - I(y))\, .
\end{align*}
Then, by the same line of argument that is used to prove \eqref{main1} and \eqref{main2}, one can prove that if the error term in the lower bound is small, then $\ee(f(Y)) \approx f(y)$. Since this is true for any $y$, this completes the sketch of the proof of the lower bound.

Our final task in this section is to give a sketch of the proof of the low complexity gradient condition for subgraph counts. For simplicity of exposition, let us just consider the count of triangles. Let $n=N(N-1)/2$ and let us agree to denote elements of $\rr^n$ as $x = (x_{ij})_{1\le i<j\le N}$, with the convention that $x_{ii}=0$ and $x_{ji}=x_{ij}$. Define a function $f:\rr^n \ra \rr$ as 
\[
f(x)=\frac{1}{N}\sum_{i,j,k=1}^Nx_{ij}x_{jk}x_{ki}\, .
\]
Then note that
\[
\fpar{f}{x_{ij}} = \frac{3}{N}\sum_{k=1}^N x_{ik}x_{jk} =: 3a_{ij}(x)\, .
\]
We will now sketch why the numbers $a_{ij}(x)$ may be encoded by $o(N^2)$ bits. For any $x$, let $M(x)$ be the square matrix whose $(i,j)$th entry is $x_{ij}$. Note that for any $x$ and $y$,
\begin{align*}
\sum_{i,j=1}^N (a_{ij}(x)-a_{ij}(y))^2  &= \frac{1}{N^2}\sum_{i,j,k,l} (x_{ik}x_{jk}-y_{ik}y_{jk})(x_{il}x_{jl}-y_{il}y_{jl})\, .
\end{align*}
Let us now expand out the right-hand side and consider one pair of terms:
\begin{align*}
\frac{1}{N^2}\sum_{i,j,k,l} (x_{ik}x_{jk}x_{il}x_{jl}-x_{ik}x_{jk}y_{il}y_{jl})\, .
\end{align*}
This term may be written in a telescoping manner as 
\begin{align*}
\frac{1}{N^2}\sum_{i,j,k,l} x_{ik}x_{jk}x_{il}(x_{jl}-y_{jl}) + \frac{1}{N^2}\sum_{i,j,k,l} x_{ik}x_{jk}(x_{il}-y_{il})y_{jl}\, .
\end{align*}
Let us consider the first term above. The crucial observation is that if $i$ and $k$ are fixed, then the sum in $j$ and $l$ is a quadratic form of the matrix $M(x)-M(y)$. Upon observing this, it is easy to see that this term is bounded above by 
\[
N\|M(x)-M(y)\|_{\text{op}}\, ,
\]
where $\|M(x)-M(y)\|_{\text{op}}$ is the $L^2$ operator norm of the matrix $M(x)-M(y)$. A similar bound may be obtained for all other terms, leading to the conclusion that 
\begin{equation}\label{mainmatrix}
\sum_{i,j} (a_{ij}(x)-a_{ij}(y))^2 \le CN\|M(x)-M(y)\|_{\text{op}}\, ,
\end{equation}
where $C$ is a universal constant. 

Now take any $x$ and let $\lambda_1, \lambda_2, \ldots, \lambda_n$ be the eigenvalues of the symmetric matrix $M(x)$, arranged in decreasing order of magnitude. Then 
\[
\sum_{i=1}^n \lambda_i^2 = \text{Trace}(M(x)^2) = \sum_{i,j=1}^N x_{ij}^2 \le N^2\, ,
\]
which implies the important observation that $\lambda_i^2\le N^2/i$ for each $i$ since $|\lambda_1|\ge|\lambda_2|\ge \cdots \ge |\lambda_n|$. As a result of this, if $M'$ is the matrix obtained from $M(x)$ after throwing away the terms corresponding the $\lambda_{i+1},\ldots,\lambda_n$ in its spectral decomposition, then 
\[
\|M(x)-M'\|_{\text{op}}\le \frac{N}{\sqrt{i+1}}\, .
\]
In other words, $M(x)$ may be approximated by a rank $i$ matrix if we allow $O(Ni^{-1/2})$ error of approximation in the operator norm. But we need only $O(Ni \log N)$ bits to encode a rank $i$ matrix. Taking $i=\ep^{-4}$, and combining with the inequality \eqref{mainmatrix}, it is now easy to see how the quantities $a_{ij}(x)$ may be encoded by $O(N \ep^{-4}\log N)$ bits with $O(\ep)$ error in approximation for a typical $a_{ij}$, on average. This proves the low complexity gradient condition for triangle counts. The proof for general subgraph counts is a messy but straightforward generalization of the above argument.

\section{Proof of Theorem \ref{freethm}}\label{expo}
In this section, we fill out the gaps in the sketch given in the previous section and thereby produce a complete proof of Theorem \ref{freethm}.

Throughout this section, we will freely use the notation of Theorem \ref{freethm}. In particular, $F$, $f$, $f_i$, $f_{ij}$, $a$, $b_i$, $c_{ij}$ and $\dd(\ep)$ are as in the statement of Theorem~\ref{freethm}. Let us also define some additional notation, as follows. (Some of this has already been introduced in the previous section, but we will repeat the definitions here just in case the reader has skipped that part.)

 We will generally denote the $i$th coordinate of a vector $x\in \rr^n$ by $x_i$. Similarly, the $i$th coordinate of a random vector $X$ will be denoted by $X_i$. Given $x\in [0,1]^n$, define $x^{(i)}$ to be the vector $(x_1,\ldots, x_{i-1}, 0, x_{i+1},\ldots, x_n)$. For a random vector $X$ define $X^{(i)}$ similarly. Given a function $g:[0,1]^n \ra\rr$, define the discrete derivative $\Delta_i g$ as 
\begin{align*}
\Delta_i g(x) &:= g(x_1,\ldots, x_{i-1}, 1,x_{i+1},\ldots, x_n)- g(x_1,\ldots, x_{i-1}, 0,x_{i+1},\ldots, x_n).
\end{align*}
For each $i$, define a function $\hx_i:[0,1]^n \ra [0,1]$ as
\begin{align*}
\hx_i(x) = \frac{1}{1+e^{-\Delta_i f(x)}}. 
\end{align*}
Let $\hx:[0,1]^n \ra[0,1]^n$ be the vector-valued function whose $i$th coordinate function is $\hx_i$. When the vector $x$ is understood from the context, we will simply write $\hx$ and $\hx_i$ instead of $\hx(x)$ and $\hx_i(x)$. The proof of Theorem \ref{freethm} requires two key lemmas. %The first lemma shows that $f(X)\approx f(\HX)$ with high probability.
\begin{lmm}\label{meanlmm1}
Let $X=(X_1,\ldots, X_n)$ be a random vector that has probability density proportional to $e^{f(x)}$ on $\{0,1\}^n$ with respect to the counting measure.  Let $\HX = \hx(X)$. Then
\[
\ee\bigl[(f(X) - f(\HX))^2 \bigr]\le \sum_{i=1}^n(ac_{ii} + b_i^2)+ \frac{1}{4}\sum_{i,j=1}^n \bigl(ac_{ij}^2 +  b_i b_j c_{ij}\bigr).
\]
\end{lmm}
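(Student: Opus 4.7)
The plan is to exploit the fact that, when $X$ has density proportional to $e^{f(x)}$ on $\{0,1\}^n$, the conditional distribution of $X_i$ given $X^{(i)}$ is $\text{Bernoulli}(\hx_i(X))$, since
\[
\pp(X_i = 1 \mid X^{(i)}) = \frac{e^{\Delta_i f(X)}}{1 + e^{\Delta_i f(X)}} = \hx_i(X) = \HX_i.
\]
In particular $\ee[(X_i - \HX_i)\Phi] = 0$ for any $\Phi$ measurable with respect to $X^{(i)}$. Since $\hx_i$ does not depend on the $i$-th coordinate, both $\hx(X^{(i)})$ and $X^{(i)}$ are functions of $X^{(i)}$ only.

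Next, I would expand $D := f(X) - f(\HX)$ via the fundamental theorem of calculus along the segment from $\HX$ to $X$, writing
\[
D = \int_0^1 \sum_{i=1}^n (X_i - \HX_i) u_i(t, X)\, dt, \qquad u_i(t,x) := f_i(tx + (1-t)\hx(x)).
\]
Squaring and taking expectation gives $\ee(D^2) = \sum_i \int_0^1 \ee[(X_i - \HX_i) u_i(t, X) D]\,dt$. The key trick is to subtract $\ee[(X_i - \HX_i) u_i(t, X^{(i)}) D_i] = 0$, where $D_i := f(X^{(i)}) - f(\hx(X^{(i)}))$, which yields
\[
\bigl|\ee[(X_i - \HX_i) u_i(t, X) D]\bigr| \le 2a\,\ee\bigl|u_i(t, X) - u_i(t, X^{(i)})\bigr| + b_i\, \ee|D - D_i|
\]
after using $|X_i - \HX_i| \le 1$, $|D|, |D_i| \le 2a$, and $\|u_i\| \le b_i$.

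The remaining step, which is the main technical obstacle, is to quantify the smoothness differences. For $u_i(t, X) - u_i(t, X^{(i)})$, the $i$-th coordinates of the two argument vectors differ by $tX_i$, while the $j$-th coordinates ($j \ne i$) differ by $(1-t)(\hx_j(X) - \hx_j(X^{(i)}))$. Since $\hx_j = \sigma \circ \Delta_j f$ with $|\sigma'| \le 1/4$, and since $|\Delta_j f(X) - \Delta_j f(X^{(i)})| = X_i|\Delta_i\Delta_j f(X^{(i)})| \le c_{ij} X_i$ (a mixed discrete second difference is bounded by $c_{ij}$), we obtain $|\hx_j(X) - \hx_j(X^{(i)})| \le \tfrac{1}{4} c_{ij} X_i$. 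Applying the mean value theorem to $f_i$ with $|f_{ik}| \le c_{ik}$ gives
\[
|u_i(t, X) - u_i(t, X^{(i)})| \le t c_{ii} X_i + \tfrac{1}{4}(1-t) X_i \sum_{j \ne i} c_{ij}^2,
\]
and analogously $|D - D_i| \le b_i X_i + \tfrac{1}{4} X_i \sum_{j \ne i} b_j c_{ij}$ by writing $D - D_i$ as a sum of $f(X) - f(X^{(i)})$ and $f(\hx(X^{(i)})) - f(\hx(X))$ and using $|f_k| \le b_k$. Integrating over $t$, using $\ee X_i \le 1$, and summing over $i$ produces
\[
\ee(D^2) \le \sum_i (a c_{ii} + b_i^2) + \tfrac{1}{4}\sum_{i \ne j}\bigl(a c_{ij}^2 + b_i b_j c_{ij}\bigr),
\]
which is at most the claimed bound since adding the nonnegative $i = j$ diagonal terms only enlarges the right-hand side.
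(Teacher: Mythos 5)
Your proposal is correct and follows essentially the same route as the paper's proof: the same martingale-type orthogonality $\ee[(X_i-\HX_i)u_i(t,X^{(i)})D_i]=0$, the same interpolation $u_i(t,x)=f_i(tx+(1-t)\hx(x))$, and the same smoothness bounds on $u_i(t,X)-u_i(t,X^{(i)})$ and $D-D_i$ via the $\tfrac14 c_{ij}$ Lipschitz bound for $\hx_j$. The only cosmetic difference is that you track the factor $X_i$ before discarding it and phrase the coordinate perturbations as discrete differences rather than sup norms of partial derivatives.
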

\begin{proof}
It is easy to see that
\[
\hx_i(x) = \ee(X_i\mid X_j = x_j, \, 1\le j\le n, \ j\ne i). 
\]
Let $D := f(X)-f(\HX)$.
Then clearly
\begin{align}\label{dest}
|D|\le 2a. 
\end{align}
Define
\[
h(x) := f(x) - f(\hx(x)),
\]
so that $D = h(X)$. Note that  for $i\ne j$, 
\begin{align*}
\fpar{\hx_j}{x_i}  &= \frac{e^{-\Delta_j f(x)}}{(1+e^{-\Delta_j f(x)})^2} \int_0^1 f_{ij}(x_1,\ldots, x_{j-1}, t, x_{j+1},\ldots, x_n)\, dt\, ,
\end{align*}
and for $i=j$, the above derivative is identically equal to zero. 
Since $e^{-x}/(1+e^{-x})^2\le 1/4$ for all $x\in \rr$, this shows that for all $i$ and $j$, 
\begin{equation}\label{gest}
\biggl\|\fpar{\hx_j}{x_i}\biggr\| \le \frac{c_{ij}}{4}.
\end{equation}
Thus,  
\begin{align}
\biggl\|\fpar{h}{x_i}\biggr\| &\le \|f_i\| + \sum_{j=1}^n \|f_j\| \biggl\|\fpar{\hx_j}{x_i}\biggr\|\label{hest}\\
&\le b_i + \frac{1}{4}\sum_{j=1}^n b_j c_{ij}. \nonumber 
\end{align}
Consequently, if $D_i := h(X^{(i)})$, 
then 
\begin{align}\label{dbd}
|D-D_i| \le b_i + \frac{1}{4}\sum_{j=1}^n b_j c_{ij}. 
\end{align}
For $t\in [0,1]$ and $x\in [0,1]^n$ define
\begin{align*}
u_i(t, x) &:= f_i(tx +(1-t)\hx), 
\end{align*}
so that
\[
h(x) = \int_0^1 \sum_{i=1}^n (x_i-\hx_i) u_i(t,x)\, dt. 
\]
Thus,
\begin{align}\label{d2}
\ee(D^2) &= \int_0^1 \sum_{i=1}^n \ee((X_i-\HX_i) u_i(t, X) D) \, dt. 
\end{align}
Now,  
\begin{align}\label{uiest}
\|u_i\| \le  b_i,
\end{align}
and  by \eqref{gest}, 
\begin{align}
\biggl\|\fpar{u_i}{x_i}\biggr\| &\le t \|f_{ii}\|  + (1-t)\sum_{j=1}^n \|f_{ij}\|\biggl\|\fpar{\hx_j}{x_i}\biggr\|\label{uiiest}\\
&\le tc_{ii}+ \frac{1-t}{4}\sum_{j=1}^n c_{ij}^2. \nonumber
\end{align}
The bounds \eqref{dest}, \eqref{dbd}, \eqref{uiest} and \eqref{uiiest} imply that 
\begin{align*}
&\bigl|\ee((X_i-\HX_i) u_i(t, X) D)  - \ee((X_i-\HX_i) u_i(t, X^{(i)}) D_i)\bigr|\\
&\le \ee\bigl|\bigl(u_i(t, X)  - u_i(t, X^{(i)})\bigr)D\bigr|  + \ee\bigl|u_i(t, X^{(i)}) (D-D_i)\bigr|\\
&\le 2atc_{ii}+ \frac{a(1-t)}{2}\sum_{j=1}^n c_{ij}^2 + b_i^2 +  \frac{1}{4}\sum_{j=1}^n b_i b_j c_{ij}. 
\end{align*}
But $ u_i(t, X^{(i)}) D_i$ is a function of the random variables $(X_j)_{j\ne i}$ only. Therefore by the definition of $\HX_i$, 
\[
\ee((X_i-\HX_i) u_i(t, X^{(i)}) D_i) = 0. 
\]
Thus,
\begin{align}
\bigl|\ee((X_i-\HX_i) u_i(t, X) D)\bigr| &\le 2atc_{ii}+ \frac{a(1-t)}{2}\sum_{j=1}^n c_{ij}^2 + b_i^2 +  \frac{1}{4}\sum_{j=1}^n b_i b_j c_{ij}.\nonumber
\end{align}
Using this bound in \eqref{d2} gives 
\begin{align*}
\ee(D^2)&\le \int_0^1 \sum_{i=1}^n\biggl(2atc_{ii}+ \frac{a(1-t)}{2}\sum_{j=1}^n c_{ij}^2 + b_i^2 +  \frac{1}{4}\sum_{j=1}^n b_i b_j c_{ij}\biggr)\, dt\\
&= \sum_{i=1}^n(ac_{ii} + b_i^2)+ \frac{1}{4}\sum_{i,j=1}^n \bigl(ac_{ij}^2 +  b_i b_j c_{ij}\bigr),
\end{align*}
completing the proof.
\end{proof}
\begin{lmm}\label{meanlmm2}
Let all notation be as in Lemma \ref{meanlmm1}. Then
\[
\ee\biggl[\biggl( \sum_{i=1}^n (X_i - \HX_i) \Delta_i f(X)\biggr)^2\biggr]\le \sum_{i=1}^n b^2_i + \frac{1}{4}\sum_{i,j=1}^n b_i(b_j+4) c_{ij}. 
\]
\end{lmm}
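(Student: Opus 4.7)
My plan is to follow the template established in the proof of Lemma \ref{meanlmm1}. I would set
\begin{align*}
S &:= \sum_{i=1}^n (X_i - \HX_i)\, \Delta_i f(X),\\
s(x) &:= \sum_{j=1}^n (x_j - \hx_j(x))\, \Delta_j f(x),
\end{align*}
so that $S = s(X)$, and expand
\[
\ee(S^2) = \sum_{i=1}^n \ee\bigl[(X_i - \HX_i)\, \Delta_i f(X)\cdot S\bigr].
\]
The crucial observation driving the argument is that $\Delta_i f(x)$ is by definition independent of $x_i$, and hence so is $\hx_i(x) = (1+e^{-\Delta_i f(x)})^{-1}$; in particular $\Delta_i f(X) = \Delta_i f(X^{(i)})$ and $\HX_i = \hx_i(X^{(i)})$. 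Setting $S^{(i)} := s(X^{(i)})$, the quantity $\Delta_i f(X^{(i)})\, S^{(i)}$ is a function of $X^{(i)}$ alone, so conditioning on $X^{(i)}$ would give
\[
\ee\bigl[(X_i - \HX_i)\, \Delta_i f(X^{(i)})\, S^{(i)}\bigr] = 0,
\]
and subtracting this from the $i$-th summand yields the working identity
\[
\ee(S^2) = \sum_{i=1}^n \ee\bigl[(X_i - \HX_i)\, \Delta_i f(X)\, (S - S^{(i)})\bigr].
\]

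The substantive task is then to bound $|s(x) - s(x^{(i)})|$. The $j = i$ contribution reduces to $x_i \Delta_i f(x)$ (since both $\hx_i$ and $\Delta_i f$ are independent of $x_i$), which is at most $b_i$ in absolute value. For $j \ne i$ I would decompose
\begin{align*}
&(x_j - \hx_j(x))\Delta_j f(x) - (x_j - \hx_j(x^{(i)}))\Delta_j f(x^{(i)})\\
&\qquad = (x_j - \hx_j(x))\bigl[\Delta_j f(x) - \Delta_j f(x^{(i)})\bigr] + \bigl[\hx_j(x^{(i)}) - \hx_j(x)\bigr]\Delta_j f(x^{(i)}),
\end{align*}
and bound each piece using $|x_j - \hx_j(x)|\le 1$, the mean value estimate $|\Delta_j f(x) - \Delta_j f(x^{(i)})| \le c_{ij}$ (from $\|f_{ij}\|\le c_{ij}$), the Lipschitz estimate $|\hx_j(x) - \hx_j(x^{(i)})| \le c_{ij}/4$ already recorded in \eqref{gest}, and $|\Delta_j f| \le b_j$. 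This gives a per-$j$ bound of $c_{ij}(1 + b_j/4)$, and summing yields
\[
|S - S^{(i)}| \le b_i + \sum_{j=1}^n c_{ij}\bigl(1 + b_j/4\bigr).
\]

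Finally, inserting this together with the trivial bound $|(X_i - \HX_i)\Delta_i f(X)|\le b_i$ into the working identity produces
\[
\ee(S^2) \le \sum_{i=1}^n b_i\Bigl[b_i + \sum_{j=1}^n c_{ij}(1 + b_j/4)\Bigr] = \sum_{i=1}^n b_i^2 + \frac{1}{4}\sum_{i,j=1}^n b_i(b_j + 4)\,c_{ij},
\]
which is exactly the claimed inequality. I do not expect any serious obstacle: the argument is an almost verbatim parallel of Lemma \ref{meanlmm1}, with the only new ingredient being the observation that the logistic construction of $\hx_i$ makes both $\Delta_i f$ and $\hx_i$ independent of $x_i$, which is precisely the property the conditioning step requires. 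The book-keeping in estimating $|S - S^{(i)}|$ is then mechanical, and every intermediate inequality needed has already been established in the proof of Lemma \ref{meanlmm1}.
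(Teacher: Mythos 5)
Your proposal is correct and follows essentially the same route as the paper: both define $G(x)=\sum_i(x_i-\hx_i(x))\Delta_i f(x)$, exploit that $(X_i-\HX_i)$ is orthogonal to functions of $(X_j)_{j\ne i}$ (using that $\Delta_i f$ and $\hx_i$ do not depend on $x_i$), and bound $|G(X)-G(X^{(i)})|$ by $b_i+\sum_j c_{ij}(1+b_j/4)$ — the paper does this via the partial derivative $\partial G/\partial x_i$ while you telescope term by term, which is the same estimate. All intermediate bounds you invoke are already established in the paper, and the final bookkeeping matches the claimed inequality exactly.
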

\begin{proof}
Let $g_i$ denote the function $\Delta_i f$, for notational simplicity. Note that 
\[
g_i(x) = \int_0^1 f_i(x_1,\ldots, x_{i-1}, t, x_{i+1},\ldots, x_n)\, dt,
\]
which shows that
\begin{equation}\label{g1}
\|g_i\|\le \|f_i\| = b_i
\end{equation}
and for all $j$,
\begin{equation}\label{g2}
\biggl\|\fpar{g_i}{x_j}\biggr\|\le \|f_{ij}\|= c_{ij}.
\end{equation}
Let
\[
G(x) := \sum_{i=1}^n(x_i-\hx_i(x)) g_i(x).
\]
%Then by \eqref{g1}, 
%\[
%\|G\|\le \sum_{i=1}^n \|g_i\| \le \sum_{i=1}^n b_i. 
%\] 
%Next, note that 
Then 
\begin{align*}
\fpar{G}{x_i} &= \sum_{j=1}^n \biggl[\biggl(1_{\{j=i\}} - \fpar{\hx_j}{x_i}\biggr)g_j(x) + (x_j-\hx_j) \fpar{g_j}{x_i}\biggr]
\end{align*}
and therefore by \eqref{gest}, \eqref{g1} and \eqref{g2},
\begin{align}\label{g3}
\biggl\|\fpar{G}{x_i}\biggr\| &\le b_i + \frac{1}{4}\sum_{j=1}^n c_{ij}b_j + \sum_{j=1}^n c_{ij}. 
\end{align}
%Next, we write 
%\[
%\ee(G^2) = \sum_{i=1}^n \ee((X_i - \HX_i) g_i(X) G(X)). 
%\]
Note that for any $x$, 
\begin{align}\label{g4}
|G(x) - G(x^{(i)})| &\le \biggl\|\fpar{G}{x_i}\biggr\|.
\end{align}
Again, $g_i(X)$ and $G(X^{(i)})$ are both functions of $(X_j)_{j\ne i}$ only. Therefore
\begin{align}\label{g5}
\ee((X_i - \HX_i) g_i(X) G(X^{(i)})) = 0. 
\end{align}
Combining \eqref{g3}, \eqref{g4} and \eqref{g5} gives 
\begin{align*}
\ee(G(X)^2) &= \sum_{i=1}^n\ee((X_i-\HX_i) g_i(X) G(X))\\
&\le \sum_{i=1}^n b_i\biggl( b_i + \frac{1}{4}\sum_{j=1}^n c_{ij}b_j + \sum_{j=1}^n c_{ij}\biggr).
\end{align*}
This completes the proof of the lemma.
\end{proof}
With the aid of Lemma \ref{meanlmm1} and \ref{meanlmm2}, we are now ready to prove Theorem~\ref{freethm}. 
\begin{proof}[Proof of the upper bound in Theorem \ref{freethm}]
For $x,y\in [0,1]^n$, let
\[
g(x, y) := \sum_{i=1}^n(x_i \log y_i + (1-x_i)\log (1-y_i)). 
\]
Note that 
\begin{align}\label{gidiff}
g(x, \hx)-I(\hx)  &= \sum_{i=1}^n (x_i - \hx_i) \log \frac{\hx_i}{1-\hx_i} = \sum_{i=1}^n (x_i - \hx_i) \Delta_i f(x).
\end{align}
Let
\[
B := 4\biggl(\sum_{i=1}^n(ac_{ii} + b_i^2)+ \frac{1}{4}\sum_{i,j=1}^n \bigl(ac_{ij}^2+b_i b_j c_{ij} + 4b_i c_{ij}\bigr)\biggr)^{1/2}.
\]
%and
%\[
%\ep_2 := 2\biggl( \sum_{i=1}^n b^2_i + \frac{1}{4}\sum_{i,j=1}^n b_i(b_j+1) c_{ij}\biggr)^{1/2}
%\]
Let 
\[
A_1 := \{x\in \{0,1\}^n : |I(\hx)-g(x,\hx)|\le B/2\}, 
\]
and 
\[
A_2 := \bigl\{x\in \{0,1\}^n: |f(x)-f(\hx)| \le B/2\bigr\}.
\]
Let $A=A_1 \cap A_2$.  By Lemma \ref{meanlmm2} and the identity \eqref{gidiff}, $\pp(X\not \in A_1)\le 1/4$. By Lemma \ref{meanlmm1}, $\pp(X\not \in A_2)\le 1/4$. Thus, 
\[
\pp(X\in A) \ge \frac{1}{2}. 
\]
That is, 
\[
\frac{\sum_{x\in A} e^{f(x)}}{\sum_{x\in \{0,1\}^n} e^{f(x)}} \ge \frac{1}{2},
\]
and therefore by the definition of the set $A$,
\begin{align}
F &= \log \sum_{x\in \{0,1\}^n} e^{f(x)}\le \log \sum_{x\in A} e^{f(x)} + \log 2 \label{fineq1}\\
&\le B + \log \sum_{x\in A} e^{f(\hx) -I(\hx)+g(x, \hx)} + \log 2.\nonumber
\end{align}
Now take some $x\in [0,1]^n$ and let $d$ satisfy \eqref{entropy}. Then by the Cauchy-Schwarz inequality,  
\[
\sum_{i=1}^n |f_i(x)- d_i|\le n \ep. 
\]
Fix such an $x$ and $d$. Note that for each $i$,
\begin{align*}
|\Delta_i f(x) - f_i(x)| &\le \int_0^1 |f_i(x_1,\ldots, x_{i-1}, t, x_{i+1},\ldots, x_n) - f_i(x)| \, dt\\
&\le \|f_{ii}\| = c_{ii}. 
\end{align*}
By the last two inequalities and \eqref{entropy}, 
\begin{equation}\label{fd1}
\sum_{i=1}^n |\Delta_i f(x)- d_i|\le n \ep + \sum_{i=1}^n c_{ii}. 
\end{equation}
and 
\begin{equation}\label{fd2}
\biggl(\sum_{i=1}^n (\Delta_i f(x)- d_i)^2\biggr)^{1/2}\le n^{1/2} \ep + \Big(\sum_{i=1}^n c_{ii}^2\Big)^{1/2}. 
\end{equation}
Let $u(x) = 1/(1+e^{-x})$. Note that for all $x$, 
\[
|u'(x)| = \frac{1}{(e^{x/2} + e^{-x/2})^2}\le \frac{1}{4}. 
\]
Therefore if a vector $p = p(d)$ is defined as $p_i = u(d_i)$, then  by \eqref{fd2},
\begin{align*}
\biggl(\sum_{i=1}^n(\hx_i - p_i)^2\biggr)^{1/2}&\le \biggl(\frac{1}{16} \sum_{i=1}^n (\Delta_i f(x)- d_i)^2\biggr)^{1/2} \\
&\le \frac{n^{1/2}\ep}{4}+\frac{1}{4}\Big(\sum_{i=1}^n c_{ii}^2\Big)^{1/2}. 
\end{align*}
Thus, if 
\[
L := \Big(\sum_{i=1}^n b_i^2\Big)^{1/2}, 
\]
then
\begin{align}
|f(\hx)- f(p)|&\le L \biggl(\sum_{i=1}^n(\hx_i - p_i)^2\biggr)^{1/2} \label{pd2}\\
&\le \frac{Ln^{1/2}\ep}{4}+\frac{L}{4}\Big(\sum_{i=1}^n c_{ii}^2\Big)^{1/2}.\nonumber
\end{align}
Next, let $v(x)=\log(1+e^{-x})$. Then for all $x$, 
\[
|v'(x)|= \frac{e^{-x}}{1+e^{-x}} \le 1.
\]
Consequently, 
\[
|\log \hx_i - \log p_i|\le |\Delta_i f(x) - d_i|
\]
and 
\[
|\log(1- \hx_i) - \log (1-p_i)|\le |\Delta_i f(x) - d_i|.
\]
Therefore by \eqref{fd1},
\begin{align}\label{pd1}
| g(x, \hx) -g(x,p)|&\le 2\sum_{i=1}^n |\Delta_i f(x)-d_i| \le 2n\ep+2\sum_{i=1}^n c_{ii}. 
\end{align}
Finally, let $w(x)=I(u(x))$. Then
\begin{align*}
w'(x) &= u'(x)I'(u(x)) \\
&= \frac{e^{-x}}{(1+e^{-x})^2}\log \frac{u(x)}{1-u(x)}\\
&= \frac{xe^{-x}}{(1+e^{-x})^2}. 
\end{align*}
Thus, for all $x$,
\[
|w'(x)|\le \sup_{x\in \rr} \frac{|x|e^{-x}}{(1+e^{-x})^2}\le \sup_{x\ge 0} xe^{-x} = \frac{1}{e}.
\]
Thus,
\[
|I(\hx_i)-I(p_i)|\le \frac{1}{e}|\Delta_i f(x)- d_i|, 
\]
and so by \eqref{fd1}, 
\begin{equation}\label{pd3}
|I(\hx)- I(p)|\le \frac{n\ep}{e} + \frac{1}{e}\sum_{i=1}^n c_{ii}. 
\end{equation}
For each $d\in \dd(\ep)$ let $\mc(d)$ be the set of all $x\in \{0,1\}^n$ such that \eqref{entropy} holds, and let $p(d)$ be the vector $p$ defined above. Then by \eqref{pd2}, \eqref{pd1} and \eqref{pd3}, 
\begin{align}
&\log \sum_{x\in A} e^{f(\hx)-I(\hx) + g(x, \hx )} \le \log \sum_{d\in \dd(\ep)}\sum_{x\in \mc(d)} e^{f(\hx)-I(\hx) + g(x, \hx)} \label{fineq2} \\
&\le \frac{Ln^{1/2}\ep}{4}+\frac{L}{4}\Big(\sum_{i=1}^n c_{ii}^2\Big)^{1/2} + 2n\ep+2\sum_{i=1}^n c_{ii} + \frac{n\ep}{e} + \frac{1}{e}\sum_{i=1}^n c_{ii} \nonumber \\
&\qquad  + \log \sum_{d\in \dd(\ep)}\sum_{x\in \mc(d)} e^{f(p(d))-I(p(d)) + g(x,p(d))}. \nonumber
\end{align}
Now note that for any $p\in [0,1]^n$, 
\[
\sum_{x\in \{0,1\}^n} e^{g(x,p)} = 1. 
\]
Thus,
\begin{align}
 \log \sum_{d\in \dd(\ep)}&\sum_{x\in \mc(d)} e^{f(p(d))-I(p(d)) + g(x,p(d))} \label{fineq3}\\
 &\le  \log \sum_{d\in \dd(\ep)}e^{f(p(d))-I(p(d))}\nonumber\\
 &\le \log|\dd(\ep)| + \sup_{p\in [0,1]^n} (f(p)-I(p)). \nonumber
\end{align}
Combining \eqref{fineq1}, \eqref{fineq2} and \eqref{fineq3}, the proof is complete.
\end{proof}
\begin{proof}[Proof of the lower bound in Theorem \ref{freethm}]
Fix some $y\in [0,1]^n$. Let $Y = (Y_1,\ldots, Y_n)$ be a random vector with independent components, where $Y_i$ is a $Bernoulli(y_i)$ random variable. Then by Jensen's inequality,
\begin{align*}
\sum_{x\in \{0,1\}^n} e^{f(x)} &= \sum_{x\in \{0,1\}^n} e^{f(x)-g(x,y)+g(x,y)}\\
&= \ee(e^{f(Y)-g(Y,y)})\\
&\ge \exp(\ee(f(Y) - g(Y,y)))\\
&= \exp(\ee(f(Y)) - I(y))\, .
\end{align*}
Let $S := f(Y)- f(y)$. 
For $t\in [0,1]$ and $x\in [0,1]^n$ define
\begin{align*}
v_i(t, x) &:= f_i(tx +(1-t)y)\, , 
\end{align*}
so that
\begin{equation}\label{d2new}
S = \int_0^1 \sum_{i=1}^n (Y_i-y_i) v_i(t,Y)\, dt\, . 
\end{equation}
By the independence of $Y_i$ and $Y^{(i)}$,  
\begin{align*}
\bigl|\ee((Y_i-y_i) v_i(t, Y) )\bigr| &= \bigl|\ee((Y_i-y_i)( v_i(t, Y)- v_i(t, Y^{(i)})))\bigr|\\
&\le \biggl\|\fpar{v_i}{x_i}\biggr\|\le t c_{ii}\, . 
\end{align*}
Using this bound in \eqref{d2new} gives 
\begin{align*}
\ee(S)&\ge -\int_0^1 \sum_{i=1}^n tc_{ii} \, dt= -\frac{1}{2}\sum_{i=1}^nc_{ii}\, .
\end{align*}
This completes the proof.
\end{proof}

\section{Proof of Theorem \ref{uppertailthm}}\label{large}
Throughout this section, we will use the notation of Theorem \ref{uppertailthm} without explicit mention.  
\begin{proof}[Proof of the upper bound in Theorem \ref{uppertailthm}]
Let $h:\rr \ra\rr$ be a function that is twice continuously differentiable, non-decreasing,  and satisfies $h(x)=-1$ if $x\le -1$ and $h(x)=0$ if $x\ge 0$. Let $L_1:=\|h'\|$ and $L_2:= \|h''\|$. A specific choice of $h$ is given by $h(x) = 10(x+1)^3 -15(x+1)^4 + 6(x+1)^5-1$ for $-1\le x\le 0$, which gives $L_1 \le 2$ and $L_2 \le 6$. 
Define
\[
\psi(x) := K h((x-t)/\delta). 
\]
Then clearly
\[
\|\psi\|\le K, \hskip.2in \|\psi'\|\le \frac{L_1K}{\delta}, \hskip.2in \|\psi''\|\le \frac{L_2K}{\delta^2}\, . 
\]
Let 
\[
g(x) := n\psi(f(x)/n) + \sum_{i=1}^n (x_i \log p + (1-x_i) \log(1-p))\, .
\]
The plan is to apply Theorem \ref{freethm} to the function $g$ instead of $f$. Note that $\psi(x) = 0$ if $x\ge t$. Thus, 
\begin{align*}
\pp(f(Y) \ge tn) &\le \ee(e^{n\psi(f(Y)/n)}) \\
&= \sum_{x\in \{0,1\}^n} e^{g(x)}\, .
\end{align*}
Note also that for any $x\in [0,1]^n$ such that $f(x)\ge tn$, 
\[
g(x)-I(x) = n \psi(f(x)/n) - I_p(x) = -I_p(x) \le -\phi_p(t).
\] 
Again, if $f(x)\le (t-\delta)n$, then $(f(x)/n-t)/\delta \le -1$, and so
\[
g(x)-I(x) = -nK-I_p(x)\le -nK = -\phi_p(t)\, . 
\]
Finally, note that if $f(x) = (t-\delta')n$ for some $0< \delta' < \delta$, then 
\[
g(x)-I(x) \le -I_p(x) \le -\phi_p(t-\delta')\le -\phi(t-\delta)\, . 
\]
Thus,
\begin{equation*}\label{supgi}
\sup_x(g(x)-I(x)) \le -\phi_p(t-\delta)\, .
\end{equation*}
Let $C_p := |\log p| + |\log (1-p)|$. Note that 
\[
\|g\|\le nK + nC_p = \alpha\, ,
\]
and for any $i$,
\begin{align*}
\biggl\|\fpar{g}{x_i}\biggr\| &\le \frac{2K b_i}{\delta} + C_p = \beta_i\, ,
\end{align*}
and for any $i,j$, 
\begin{align*}
\biggl\|\mpar{g}{x_i}{x_j}\biggr\| &\le \frac{2K c_{ij}}{\delta} +\frac{ 6K b_i b_j}{n\delta^2} = \gamma_{ij}\, .
\end{align*}
Next, fix some $\ep > 0$ and let $\dd(\ep)$ be as in Section \ref{expo}. Let
\[
\ep' := \frac{\ep}{2\|\psi'\|}\, , \ \ \ \tau := \frac{\ep}{2\big(\frac{1}{n}\sum_{i=1}^n b_i^2\big)^{1/2}}\, .
\]
Let $l\in \rr^n$ be the vector whose coordinates are all equal to $\log(p/(1-p))$ and define
\[
\dd'(\ep) := \{\theta d + l : d\in \dd(\ep'), \, \theta = j\tau \text{ for some integer } 0\le j< \|\psi'\|/\tau\}\, .
\]
Let $g_i := \partial g/\partial x_i$. Take any $x\in [0,1]^n$, and choose $d\in \dd(\ep)$ satisfying \eqref{entropy}. Choose an integer $j$ between $0$ and $\|\psi'\|/\tau$ such that $|\psi'(f(x)/n) - j\tau|\le \tau$. Let $d' := j\tau d+ l$, so that $d'\in \dd'(\ep)$. Then
\begin{align*}
&\sum_{i=1}^n (g_i(x) - d'_i)^2 = \sum_{i=1}^n (\psi'(f(x)/n)f_i(x) - j\tau d_i)^2\\
&\le 2(\psi'(f(x)/n)- j\tau)^2 \sum_{i=1}^n f_i(x)^2 + 2\|\psi'\|^2 \sum_{i=1}^n (f_i(x)-d_i)^2\\
&\le  2\tau^2 \sum_{i=1}^n b_i^2 + 2\|\psi'\|^2 n \ep'^2 = n\ep^2. 
\end{align*}
This shows that $\dd'(\ep)$ plays the role of $\dd(\ep)$ for the function $g$. Note that
\[
|\dd'(\ep)|\le \frac{\|\psi'\|}{\tau} |\dd(\ep')|\, .
\]
This gives the upper bound on the complexity term for the function $g$. The proof is completed by applying Theorem~\ref{freethm}. 
\end{proof}

\begin{proof}[Proof of the lower bound in Theorem \ref{uppertailthm}]
Fix any $z\in [0,1]^n$ such that 
\[
f(z)\ge (t+\delta_0) n\, .
\]
Let $Z = (Z_1,\ldots, Z_n)$ be a random vector with independent components, where $Z_i\sim Bernoulli(z_i)$.  Let $\ma$ be the set of all $x\in \{0,1\}^n$ such that $f(x)\ge tn$. Let $\ma'$ be the subset of $\ma$ where $|g(x,z)-g(x,p) - I_p(z)|\le \ep_0 n$. Then
\begin{align}
\pp(f(Y)\ge tn) &= \sum_{x\in \ma} e^{g(x,p)}\label{peq0}\\
&= \sum_{x\in \ma} e^{g(x,p)-g(x,z) + g(x,z)} \nonumber \\
&\ge \sum_{x\in \ma'} e^{g(x,p)-g(x,z) + g(x,z)}\ge e^{-I_p(z)-\ep_0 n} \pp(Z\in \ma')\, .\nonumber 
\end{align}
Note that 
\[
\ee(g(Z, z) - g(Z, p)) = I_p(z)\, ,
\]
and 
\begin{align*}
&\var(g(Z, z) - g(Z, p)) \\
&= \sum_{i=1}^n \var(Z_i\log (z_i/p) + (1-Z_i)\log((1-z_i)/(1-p))) \\
&= \sum_{i=1}^n z_i(1-z_i) \biggl(\log \frac{z_i/p}{(1-z_i)/(1-p)}\biggr)^2\, . 
\end{align*}
Using the inequalities $|\sqrt{x}\log x|\le 2/e\le 1$ and $x(1-x)\le 1/4$, we see that for any $x\in [0,1]$, 
\begin{align*}
&x(1-x) \biggl(\log \frac{x/p}{(1-x)/(1-p)}\biggr)^2\\
&\le \biggl(|\sqrt{x}\log x| + |\sqrt{1-x}\log (1-x)| +\frac{1}{2}\biggl|\log \frac{p}{1-p}\biggr|\biggr)^2\\
&\le \biggl(2 +\frac{1}{2}\biggl|\log \frac{p}{1-p}\biggr|\biggr)^2\, .
\end{align*}
Combining the last three displays, we see that
\begin{align}\label{peq1}
\pp(|g(Z,z)-g(Z, p) - I_p(z)| > \ep_0 n) \le \frac{1}{\ep_0^2 n}\biggl(2 +\frac{1}{2}\biggl|\log \frac{p}{1-p}\biggr|\biggr)^2 = \frac{1}{4}\, .
\end{align}
Let $S := f(Z)-f(z)$ and $v_i(t,x) := f_i(tZ + (1-t) z)$. Let $S_i := f(Z^{(i)}) - f(z)$, so that $|S-S_i|\le b_i$. Since
\[
S = \int_0^1  \sum_{i=1}^n (Z_i-z_i) v_i(t,Z)\, dt\, ,
\]
we have
\begin{align}\label{d23}
\ee(S^2) &= \int_0^1  \sum_{i=1}^n \ee((Z_i-z_i) v_i(t,Z)S)\, dt\, .
\end{align}
By the independence of $Z_i$ and the pair $(S_i, Z^{(i)})$, 
\begin{align*}
&\bigl|\ee((Z_i-z_i) v_i(t,Z)S)\bigr| \\
&= \bigl|\ee((Z_i-z_i) (v_i(t,Z)S - v_i(t, Z^{(i)})S_i))\bigr|\\
&\le \|S\|\biggl\|\fpar{v_i}{x_i}\biggr\|+ \|v_i\| \|S-S_i\|\\
&\le 2at c_{ii}+ b_i^2\, .
\end{align*}
By \eqref{d23}, this gives 
\[
\ee(S^2) \le \sum_{i=1}^n (ac_{ii}+b_i^2)\, .
\]
Therefore,
\begin{align}\label{peq2}
\pp(f(Z)< tn) \le \frac{1}{\delta_0^2n^2}\sum_{i=1}^n (ac_{ii}+b_i^2) = \frac{1}{4}\, .
\end{align}
Inequalities \eqref{peq1} and \eqref{peq2} give
\[
\pp(Z\in \ma') \ge \frac{1}{2}\, .
\]
Plugging this into \eqref{peq0} and taking supremum over $z$ completes the proof. 
\end{proof}

\section{Proof of Theorem \ref{graphthm}}\label{graphsec}
Let all notation be the same as in the statement of Theorem \ref{graphthm}. Let 
\[
n := {N \choose  2}.
\]
Throughout this section, we will index the elements of $\rr^n$ as 
\[
x = (x_{ij})_{1\le i<j\le N}\, ,
\]
with the understanding that if $i<j$, then $x_{ji}$ is the same as $x_{ij}$, and for all $i$, $x_{ii}=0$. Let $k$ be a positive integer, and let $H$ be a finite simple graph on the vertex set $[k] := \{1,\ldots, k\}$. Let $E$ be the set of edges of $H$ and let $m:= |E|$. 

%Let $\psi:[0,1]\ra\rr$ be a continuous function that is $C^2$ in $(0,1)$. 
Define a function $T:[0,1]^n \ra\rr$ as 
\begin{align}\label{tdef}
T(x) &:= \frac{1}{N^{k-2}}\sum_{q\in [N]^k} \prod_{\{l,l'\}\in E}x_{q_lq_{l'}}\, ,
\end{align}
so that $t(H,G_x) = T(x)/N^2$. 
%For example, if $H$ is a triangle, then 
%\[
%T(x) = \frac{1}{N^3}\sum_{1\le q_1, q_2, q_3\le N} x_{q_1q_2}x_{q_2q_3}x_{q_3q_1}. 
%\]
The plan is to apply Theorem \ref{uppertailthm} with $f=T$. We will now compute the required bounds for the function $T$.
\begin{lmm}\label{test1}
For the function $T$ on $\rr^n$ defined above, $\|T\|\le N^2$, and for any $i<j$ and $i'<j'$, 
\begin{align*}
&\biggl\|\fpar{T}{x_{ij}}\biggr\| \le 2m\, , \text{ and}\\
&\biggl\|\mpar{T}{x_{ij}}{x_{i'j'}}\biggr\|\le 
\begin{cases}
4m(m-1) N^{-1} \ &\text{ if $|\{i,j,i',j'\}| = 2$ or $3$\, ,} \\
4m(m-1) N^{-2} & \text{ if $|\{i,j,i',j'\}| = 4$\, .}
\end{cases} 
\end{align*}
\end{lmm}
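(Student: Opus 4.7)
The lemma is a purely combinatorial bound on the partial derivatives of the polynomial $T(x)$; my plan is to differentiate term by term and estimate by counting assignments $q\in[N]^k$. The sup norm bound is immediate: each summand $\prod_{\{l,l'\}\in E}x_{q_lq_{l'}}$ is a product of $m$ numbers in $[0,1]$ (using $x_{ii}=0$ to handle $q_l=q_{l'}$), so it lies in $[0,1]$; the sum has $N^k$ terms, so after dividing by $N^{k-2}$ we obtain $\|T\|\le N^2$.

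For the first derivative, since each factor $x_{q_lq_{l'}}$ is linear in its own variable, the product rule gives
\[
\fpar{T}{x_{ij}}(x)=\frac{1}{N^{k-2}}\sum_{\{l,l'\}\in E}\sum_{q\in[N]^k:\,\{q_l,q_{l'}\}=\{i,j\}}\prod_{\{a,b\}\in E\setminus\{\{l,l'\}\}}x_{q_aq_b}.
\]
Each reduced product lies in $[0,1]$; for each fixed edge $\{l,l'\}\in E$, the constraint $\{q_l,q_{l'}\}=\{i,j\}$ admits two solutions for the pair $(q_l,q_{l'})$ and leaves the remaining $k-2$ coordinates free, for a total of $2N^{k-2}$ admissible tuples. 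Summing over the $m$ edges and dividing by $N^{k-2}$ yields $|\partial T/\partial x_{ij}|\le 2m$.

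For the second derivative, one more application of the product rule produces, for each $q\in[N]^k$ and each \emph{ordered} pair $(e_1,e_2)$ of \emph{distinct} edges of $H$ with $e_1$ mapped to $\{i,j\}$ and $e_2$ mapped to $\{i',j'\}$ under $q$, a single reduced product bounded by $1$. I split the count by $s:=|V(e_1)\cup V(e_2)|\in\{3,4\}$ and $r:=|\{i,j,i',j'\}|\in\{2,3,4\}$. If $s=4$, the four distinct constrained coordinates admit $2\cdot 2$ assignments, giving $4N^{k-4}$ valid tuples $q$ (independently of $r$). If $s=3$, the shared coordinate must lie in $\{i,j\}\cap\{i',j'\}$, which has $0$, $1$, or $2$ elements for $r=4,3,2$ respectively; once it is chosen, the two remaining constrained coordinates are forced, so the tuple count is $0$, $N^{k-3}$, or $2N^{k-3}$. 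There are at most $m(m-1)$ ordered distinct edge pairs in total, so summing and dividing by $N^{k-2}$ gives a bound of $4m(m-1)/N^2$ when $r=4$, and a bound of at most $(2D_3+4D_4)/N \le 4m(m-1)/N$ when $r\in\{2,3\}$, where $D_3,D_4$ partition the pairs by $s$.

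There is no real obstacle in this proof; the only subtlety to keep track of is that when $x_{ij}$ appears with multiplicity greater than one in some monomial of $T$ (which happens precisely when several edges of $H$ are realized as $\{i,j\}$ by the same $q$), the product rule automatically accounts for this by producing one contribution per ordered pair of distinct edges, which is exactly what the counting captures. The resulting constants are not tight but are clean and are sufficient for the subsequent application to Theorem \ref{graphthm}.
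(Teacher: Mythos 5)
Your proof is correct and follows essentially the same route as the paper's: write out the first and second partial derivatives via the product rule as sums over (ordered pairs of distinct) edges of $H$ and count the admissible tuples $q\in[N]^k$. Your case analysis for the second derivative (splitting on whether the two edges share a vertex) is slightly finer than the paper's uniform bound of $4N^{k-3}$ per pair when $|\{i,j,i',j'\}|\le 3$, but it yields the same constants.
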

\begin{proof}
It is clear that $\|T\|\le N^2$ since the $x_{ij}$'s are all in $[0,1]$ and there are exactly $N^k$ terms in the sum that defines $T$. Next, note that for any $i<j$, 
\begin{align}\label{tderiv}
\fpar{T}{x_{ij}} &= \frac{1}{N^{k-2}}\sum_{\{a,b\}\in E}\sum_{\substack{q\in [N]^k\\ \{q_a, q_b\} = \{i,j\}}} \prod_{\substack{\{l,l'\}\in E\\ \{l,l'\}\ne \{a, b\}}}x_{q_lq_{l'}}\, , % + \log \frac{p}{1-p}\ , 
\end{align}
and therefore
\begin{equation*}%\label{ttbd}
\biggl\|\fpar{T}{x_{ij}}\biggr\| \le \frac{2m N^{k-2}}{N^{k-2}} =2m\, . % + \biggl|\log \frac{p}{1-p}\biggr|. 
\end{equation*}
Next, for any $i<j$ and $i'<j'$, 
\begin{align*}
\mpar{T}{x_{ij}}{x_{i'j'}} &= \frac{1}{N^{k-2}}\sum_{\{a,b\}\in E} \sum_{\substack{\{c,d\}\in E\\ \{c,d\}\ne \{a,b\}}}\sum_{\substack{q\in [N]^k\\ \{q_a, q_b\} = \{i, j\}\\ \{q_c, q_d\} = \{i', j'\}}} \prod_{\substack{\{l,l'\}\in E\\ \{l,l'\}\ne \{a,b\}\\ \{l,l'\}\ne \{c, d\}}}x_{q_lq_{l'}}\ . 
\end{align*}
Take any two edges $\{a,b\}, \{c,d\}\in E$ such that $\{a,b\}\ne \{c,d\}$. Then the number of choices of $q\in [N]^k$ such that $\{q_a,  q_b\} = \{i,j\}$ and $\{q_c ,q_d\} = \{i',j'\}$ is at most $4N^{k-3}$ if $|\{i,j,i',j'\}|=2$ or $3$ (since we are constraining $q_a$, $q_b$, $q_c$ and $q_d$ and $|\{a,b,c,d\}|\ge 3$ always), and at most $4N^{k-4}$ if $|\{i,j,i',j'\}|=4$ (since $|\{a,b,c,d\}|$ must be $4$ if there is at least one possible choice of $q$ for these $i,j,i',j'$). 
This gives the upper bound for the second derivatives. 
\end{proof}

\begin{lmm}\label{test2}
For the function $T$ defined above, one can produce sets $\dd(\ep)$ satisfying the criterion \eqref{entropy} (with $f=T$) such that 
\[
|\dd(\ep)|\le  \exp\biggl(\frac{C_1 m^4 k^4N}{\ep^4}\log \frac{C_2m^4 k^4}{\ep^4}\biggr)\, ,
\]
where $C_1$ and $C_2$ are universal constants. 
\end{lmm}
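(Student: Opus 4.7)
The plan is to exploit that $\nabla T(x)$ depends on $x$ only through the symmetric matrix $M(x):=(x_{ij})_{i,j=1}^{N}$, and that this dependence is Lipschitz with respect to the \emph{operator} norm of $M(x)$. Consequently, an operator-norm cover of $\{M(x):x\in\{0,1\}^{n}\}$ gives, point by point, an $\ell^{2}$-cover of $\nabla T$ in the sense of \eqref{entropy}. This is the general-$H$ version of the spectral argument sketched for triangles in Section~2.

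\textbf{Step 1 (operator-norm Lipschitz bound).} I first establish that there is a constant $C(m,k)=O(m^{2}k^{2})$ such that, for every $x,y\in[0,1]^{n}$,
\[
\sum_{i<j}\Bigl(\tfrac{\partial T}{\partial x_{ij}}(x)-\tfrac{\partial T}{\partial x_{ij}}(y)\Bigr)^{2}\le C(m,k)\,N\,\|M(x)-M(y)\|_{\mathrm{op}}.
\]
From \eqref{tderiv}, $\partial T/\partial x_{ij}$ is a sum over edges $\{a,b\}\in E$ of rooted homomorphism densities $P_{ab}(i,j;\cdot)$ of $H\setminus\{a,b\}$ with the vertices $a,b$ pinned to $\{i,j\}$. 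For fixed $\{a,b\}$ one telescopes the product over the $m-1$ remaining edges, writing $P_{ab}(i,j;x)-P_{ab}(i,j;y)$ as a sum of $m-1$ terms in each of which exactly one factor is $(M(x)-M(y))_{i'j'}$ while every other factor is an entry of $M(x)$ or $M(y)$. Squaring, expanding, and summing over $i<j$ produces sums that — after relabeling the free indices so that the two indices $(i,j)$ of the difference appear adjacent to the $(i',j')$ carrying $M(x)-M(y)$ — reduce to iterated bilinear forms on $M(x)-M(y)$, each bounded by $\|M(x)-M(y)\|_{\mathrm{op}}$ times an $L^{\infty}$ product of the remaining matrices. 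The combinatorial factor counting edges and their orderings supplies the $O(m^{2}k^{2})$ constant. This is precisely the generalization of the inequality \eqref{mainmatrix}.

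\textbf{Step 2 (spectral truncation and covering).} For $x\in\{0,1\}^{n}$ the symmetric matrix $M(x)$ satisfies $\|M(x)\|_{\mathrm{HS}}\le N$, so its eigenvalues $|\lambda_{1}|\ge|\lambda_{2}|\ge\cdots$ obey $|\lambda_{r}|\le N/\sqrt{r}$; truncating its spectral decomposition to the top $r$ terms yields a rank-$r$ symmetric matrix $M_{r}$ with $\|M(x)-M_{r}\|_{\mathrm{op}}\le N/\sqrt{r+1}$. A standard covering argument (discretize $r$ orthonormal eigenvectors in $\mathbb{R}^{N}$ and $r$ eigenvalues in $[-N,N]$ on a uniform grid of mesh $\eta/N$, then use $\|\cdot\|_{\mathrm{op}}\le\|\cdot\|_{\mathrm{HS}}$) produces an $\eta$-net $\mathcal{N}$ of cardinality at most $(C'N/\eta)^{C''Nr}$ for the set of rank-$r$ symmetric $N\times N$ matrices of spectral radius $\le N$. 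For each $\tilde M\in\mathcal{N}$ pick a representative $\tilde x\in[0,1]^{n}$ with $M(\tilde x)=\tilde M$ (entries clipped to $[0,1]$), compute $d(\tilde M):=\nabla T(\tilde x)\in\mathbb{R}^{n}$, and let $\dd(\ep)$ be the collection of these $d(\tilde M)$.

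\textbf{Step 3 (tuning and main obstacle).} Combining Steps 1 and 2, for any $x\in\{0,1\}^{n}$ the grid point $\tilde M\in\mathcal{N}$ closest to $M(x)$ gives
\[
\sum_{i<j}\bigl(\tfrac{\partial T}{\partial x_{ij}}(x)-d_{ij}(\tilde M)\bigr)^{2}\le C(m,k)\,N\Bigl(\tfrac{N}{\sqrt{r+1}}+\eta\Bigr).
\]
Choosing $r\asymp m^{4}k^{4}\ep^{-4}$ and $\eta\asymp \ep^{4}/(m^{2}k^{2})$ makes the right-hand side at most $n\ep^{2}$, while $|\dd(\ep)|\le (C'N/\eta)^{C''Nr}\le \exp\bigl(C_{1}m^{4}k^{4}N\ep^{-4}\log(C_{2}m^{4}k^{4}\ep^{-4})\bigr)$, which is the desired bound. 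The hard part will be Step 1: for general $H$ one must verify that each of the $m-1$ terms produced by the telescoping can be bounded by the \emph{operator} norm of $M(x)-M(y)$ (and not merely by a larger Schatten norm that would degrade the $N$-dependence). This requires choosing, for every such term, a partition of the remaining indices into the two ``vectors'' of a bilinear form, and an inductive reorganization of the residual sums — effectively a rooted counting lemma in operator norm, of which the triangle case around \eqref{mainmatrix} is the three-edge prototype.
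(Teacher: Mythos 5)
Your strategy is exactly the paper's: an operator-norm Lipschitz bound for $\nabla T$ combined with a low-rank spectral covering of $\{M(x)\}$, which is what the paper does via Lemmas \ref{exupper21}--\ref{exupper22}. However, two points keep this from being a complete proof. First, your Step 1 --- which you yourself flag as ``the hard part'' --- is where essentially all of the work lies, and you describe it rather than prove it. The paper's Lemmas \ref{comb1} and \ref{comb2} are precisely the ``rooted counting lemma in operator norm'' you ask for: the key mechanism is that for an edge $e=\{\alpha,\beta\}$ lying in neither background edge set, the edges incident to $\alpha$ and to $\beta$ factor into functions $g(q_\alpha)$ and $h(q_\beta)$ bounded by $1$, so the inner sum is a genuine bilinear form in $M(x)-M(y)$ and is bounded by $N\|x-y\|_{\textup{op}}$. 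Making this work for the \emph{square} $\sum_{ij}(g_{ij}(x)-g_{ij}(y))^2$ requires gluing two copies of $H$ along the pinned edge on a doubled vertex set of size $r=2k-2$ (Lemma \ref{exupper22}), and one must check that the glued edge sets are disjoint so that the telescoping edge never also appears in the background product, plus handle the diagonal and the unordered-pair symmetrization. None of this is automatic from ``relabeling the free indices,'' so as written Step 1 is a genuine gap, albeit one you have correctly localized.

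Second, your Step 2 as tuned does not deliver the stated cardinality. With a coordinate grid of mesh $\eta/N$ and $\eta\asymp\ep^4/(m^2k^2)$, your net has $\log|\dd(\ep)|\lesssim Nr\log(C'N/\eta)$, which carries an extra $\log N$ inside the logarithm compared with the claimed $\log(C_2m^4k^4/\ep^4)$. The point you are missing is that only \emph{relative} operator-norm accuracy $N\tau$ with $\tau\asymp\ep^2/(m^2k^2)$ is required, so it suffices to discretize the unit singular vectors with an $\ell^2$-net of mesh $\asymp\tau^2$, \emph{independent of $N$}, and the normalized singular values $\lambda_i/N$ on the same scale; the resulting error is $O(N\tau)$ and the count is $(C/\tau^2)^{O(N/\tau^2)}$, with no $N$ in the base of the logarithm. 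This is exactly what the paper's Lemma \ref{exupper21} does. The extra $\log N$ would not destroy the downstream application (Theorem \ref{graphthm} already absorbs powers of $\log N$), but it does fall short of the lemma as stated, and the fix is to coarsen your net.
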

The proof of Lemma \ref{test2} requires some preparation. We begin by introducing some special notation. For an $N\times N$ matrix $M$, recall the definition of the operator norm:
\[
\|M\|_{\textup{op}} :=\max \{\|Mx\|: x\in \rr^N\, , \, \|x\|=1\}\, . %\max\{|\lambda|: \text{$\lambda$ is an eigenvalue of $M$}\}\, . 
\]
%Alternatively, $\|M\|_{\textup{op}}$ is the largest singular value of $M$. 
For $x = (x_{ij})_{1\le i<j\le N}\in \rr^n$, let $M(x)$ be the symmetric matrix whose $(i,j)$th entry is $x_{ij}$, with the convention that $x_{ij}=x_{ji}$ and $x_{ii}=0$.  Define the operator norm on $\rr^n$ as 
\[
\|x\|_{\textup{op}} := \|M(x)\|_{\textup{op}}.
\]
The following lemma estimates the entropy of the unit cube under this norm.
\begin{lmm}\label{exupper21}
For any $\tau\in (0,1)$, there is a finite set of $N\times N$ matrices $\mw(\tau)$ such that 
\[
|\mw(\tau)|\le e^{34(N/\tau^{2}) \log(51/\tau^{2})}\, ,
\]
and for any $N\times N$ matrix $M$ with entries in $[0,1]$, there exists $W\in \mw(\tau)$ such that  
\[
\|M-W\|_{\textup{op}}\le N\tau\, . 
\]
In particular, for any $x\in [0,1]^n$ there exists $W\in \mw(\tau)$ such that $\|M(x)-W\|_{\textup{op}}\le N\tau$. 
\end{lmm}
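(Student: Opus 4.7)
The strategy is the standard ``low-rank truncation plus net'' argument that was already foreshadowed in the Section~2 sketch for the triangle function. First I would use the singular value decomposition to approximate a generic $N\times N$ matrix $M$ with entries in $[0,1]$ by a matrix of rank $r$, where $r$ is chosen depending on $\tau$. Writing $M=\sum_{i\ge 1}\sigma_i u_i v_i^T$ with $\sigma_1\ge\sigma_2\ge\cdots\ge 0$, the Frobenius identity $\sum_i \sigma_i^2=\|M\|_F^2\le N^2$ (since every entry lies in $[0,1]$) forces $\sigma_k\le N/\sqrt{k}$ for every $k$, exactly as in the spectral decay observation in the Section~2 sketch. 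Hence the truncation $M_r:=\sum_{i=1}^r \sigma_i u_i v_i^T$ satisfies $\|M-M_r\|_{\textup{op}}=\sigma_{r+1}\le N/\sqrt{r+1}$, so choosing $r=\lceil 4/\tau^2\rceil$ yields $\|M-M_r\|_{\textup{op}}\le N\tau/2$.

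The remaining task is to build a net for the set of rank-$r$ matrices arising this way; the total operator-norm error will then be at most $N\tau$. I parametrise $M_r$ by the triples $(\sigma_i,u_i,v_i)$ with $\sigma_i\in[0,N]$ and $u_i,v_i$ unit vectors in $\mathbb{R}^N$. A volume/packing argument gives, for any $\eta\in(0,1)$, an $\eta$-net of the Euclidean unit sphere in $\mathbb{R}^N$ of cardinality at most $(3/\eta)^N$, and an $\eta'$-net of $[0,N]$ of cardinality at most $N/\eta'+1$. Using the telescoping identity
\[
\sigma uv^T-\sigma'u'v'^T=\sigma u(v-v')^T+\sigma(u-u')v'^T+(\sigma-\sigma')u'v'^T\,,
\]
the perturbation bound $\|\sigma uv^T-\sigma'u'v'^T\|_{\textup{op}}\le 2N\eta+\eta'$ follows, and summing over the $r$ dyads gives $\|M_r-M_r'\|_{\textup{op}}\le r(2N\eta+\eta')$. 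Picking $\eta=\tau/(8r)$ and $\eta'=N\tau/(4r)$ makes this at most $N\tau/2$, so together with the truncation error we land within $N\tau$ of $M$.

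Taking $\mathcal{W}(\tau)$ to be the resulting set of rank-$r$ matrices (one for each choice of $r$ singular values from the $[0,N]$-net and $2r$ vectors from the sphere net), its cardinality is bounded by
\[
\bigl(3/\eta\bigr)^{2Nr}\bigl(N/\eta'+1\bigr)^r\le \bigl(24r/\tau\bigr)^{2Nr}\bigl(4r/\tau+1\bigr)^r\,,
\]
whose logarithm, with $r\le 1+4/\tau^2\le 5/\tau^2$, is dominated by $2Nr\log(24r/\tau)$ and is comfortably within the target $34(N/\tau^{2})\log(51/\tau^{2})$ for every $\tau\in(0,1)$. The last statement of the lemma is immediate, since $M(x)$ has entries in $[0,1]$ whenever $x\in[0,1]^n$.

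The main obstacle I anticipate is bookkeeping rather than conceptual: one must simultaneously optimise $r$, $\eta$, and $\eta'$ to make all three error contributions balance and then verify the explicit constants $34$ and $51$ appearing in the bound on $|\mathcal{W}(\tau)|$. A small amount of care is also needed in the regime where $\tau$ is not very small, where the lower-order term $r\log(4r/\tau+1)$ and the rounding in $r=\lceil 4/\tau^2\rceil$ are non-negligible; but since the leading coefficient comparison $2\cdot 3=6$ vs.\ $34\cdot 2=68$ leaves considerable slack, these constants are easily absorbed.
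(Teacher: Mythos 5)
Your argument is correct and is essentially the paper's own proof: truncate the SVD at rank $O(\tau^{-2})$ using the decay $\sigma_k\le N/\sqrt{k}$ forced by $\sum_k\sigma_k^2\le N^2$, then discretize the retained singular vectors and values by nets and control the error via the telescoping bound on rank-one dyads. The only (immaterial) difference is that the paper approximates the entire normalized singular-value vector by a single point of a $\delta$-net of the unit ball of $\rr^N$, whereas you net each singular value separately in $[0,N]$; both choices give the stated cardinality bound with room to spare.
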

\begin{proof}
Let $l$ be the integer part of $17/\tau^2$ and $\delta = 1/l$. Let $\ma$ be a finite subset of the unit ball of $\rr^N$ such that any vector inside the ball is at Euclidean distance $\le \delta$ from some element of $\ma$. (In other words, $\ma$ is a $\delta$-net of the unit ball under the Euclidean metric.) The set $\ma$ may be defined as a maximal set of points in the unit ball such that any two are at a distance greater than $\delta$ from each other. Since the balls of radius $\delta/2$ around these points are disjoint and their union is contained in the ball of radius $1+\delta/2$ centered at zero, it follows that $|\ma|C(\delta/2)^N\le C(1+\delta/2)^N$, where $C$ is the volume of the unit ball. Therefore,
\begin{align}\label{asize}
|\ma| \le (3/\delta)^N. 
\end{align}
Take any $x\in \rr^n$. Suppose that $M$ has singular value decomposition
\[
M = \sum_{i=1}^n \lambda_i u_iv_i^t\, , 
\]
where $\lambda_1\ge \lambda_2\ge \cdots \lambda_n \ge 0$ are the singular values of $M$, and $u_1,\ldots, u_n$ and $v_1,\ldots, v_n$ are singular vectors, and $v_i^t$ denotes the transpose of the column vector $v_i$. Assume that the $u_i$'s and $v_i$'s are orthonormal systems.  Since the elements of $M$ all belong to the interval $[0,1]$, it is easy to see that $\lambda_1\le N$ and $\sum \lambda_i^2 \le N^2$. Due to the second inequality, there exists $y\in \ma$  such that 
\begin{equation}\label{lambdadelta}
\sum_{i=1}^N (N^{-1}\lambda_i-y_i)^2 \le \delta^2. 
\end{equation}
Let $z_1,\ldots,z_N$ and $w_1,\ldots, w_N$ be elements of $\ma$ such that for each $i$, 
\begin{equation}\label{uz}
\sum_{j=1}^N(u_{ij} - z_{ij})^2 \le \delta^2\ \text{ and } \ \sum_{j=1}^N(v_{ij} - w_{ij})^2\le \delta^2\, ,
\end{equation}
where $u_{ij}$ denotes the $j$th component of the vector $u_i$, etc. 
Define two matrices $V$ and $W$ as 
\[
V := \sum_{i=1}^{l-1} \lambda_iu_iv_i^t \ \text{ and } \ W := \sum_{i=1}^{l-1} Ny_iz_iw_i^t\, .
\] 
Note that since $\sum \lambda_i^2 \le N^2$ and $\lambda_i$ decreases with $i$, therefore for each $i$, $\lambda_i^2 \le N^2/i$. Thus, 
\begin{align*}
\|M-W\|_{\textup{op}} &\le \|M-V\|_{\textup{op}} + \|V-W\|_{\textup{op}}\\
&\le \frac{N}{\sqrt{l}} + \|V-W\|_{\textup{op}}. 
\end{align*}
Next, note that by \eqref{uz}, the operator norms of the rank-one matrices $(u_i-z_i)v_i^t$ and $z_i(v_i-w_i)^t$ are bounded by $\delta$. And by \eqref{lambdadelta}, $|\lambda_i - Ny_i|\le N\delta$ for each $i$. Therefore
\begin{align*}
\|V-W\|_{\textup{op}} &\le \biggl\|\sum_{i=1}^{l-1} (\lambda_i - Ny_i)u_iv_i^t\biggr\|_{\textup{op}} + \biggl\|\sum_{i=1}^{l-1} Ny_i(u_i-z_i)v_i^t\biggr\|_{\textup{op}}\\
&\qquad + \biggl\|\sum_{i=1}^{l-1} Ny_iz_i(v_i - w_i)^t\biggr\|_{\textup{op}}\\
&\le \max_{1\le i\le l-1} |\lambda_i - Ny_i|  + 2\sum_{i=1}^{l-1} N |y_i| \delta\\
&\le N\delta + 2N\delta \biggl((l-1)\sum_{i=1}^{l-1}y_i^2\biggr)^{1/2}\le N\delta + 2N\delta \sqrt{l-1} \le \frac{3N}{\sqrt{l}}\, . 
\end{align*}
Thus,
\[
\|M-W\|_{\textup{op}}\le \frac{4N}{\sqrt{l}} \le \frac{4N}{\sqrt{\frac{17}{\tau^2}-1}}\le \frac{4N}{\sqrt{\frac{16}{\tau^2}}} = N\tau. 
\]
Let $\mw(\tau)$ be the set of all possible $W$'s constructed in the above manner. Then $\mw(\tau)$ has the required property, and by \eqref{asize},
\begin{align*}
|\mw(\tau)|&\le \text{The number of ways of choosing}\\
&\qquad y, z_1,\ldots, z_{l-1}, w_1,\ldots, w_{l-1}\in \ma\\
&=  |\ma|^{2l-1}\le (3/\delta)^{2Nl} = e^{2Nl\log (3l)}. 
\end{align*}
This completes the proof of the lemma.
\end{proof}
Let $r$ be a positive integer. Let $K_r$ be the complete graph on the vertex set $\{1,\ldots, r\}$. For any set of edges $A$ of $K_r$, any $q = (q_1,\ldots,q_r)\in [N]^r$, and any $x\in [0,1]^n$, let
\[
P(x,q, A) := \prod_{\{a,b\}\in A} x_{q_aq_b}\, , 
\]
with the usual convention that the empty product is $1$. Note that if $q_a=q_b$ for some $\{a,b\}\in A$, the $P(x,q,A)=0$ due to our convention that $x_{ii}=0$ for each $i$. 
Next, note that if $A$ and $B$ are disjoint sets of edges, then 
\begin{equation}\label{pprod}
P(x,q,A\cup B) = P(x, q, A)P(x, q,B). 
\end{equation}
\begin{lmm}\label{comb1}
Let $A$ and $B$ be  sets of edges of $K_r$, and let $e = \{\alpha, \beta\}$ be an edge that is neither in $A$ nor in $B$. Then for any $x,y\in [0,1]^n$, 
\[
\biggl|\sum_{q\in [N]^r}P(x,q,A)P(y,q,B)(x_{q_\alpha q_\beta} - y_{q_\alpha q_\beta})\biggr|\le N^{r-1}\|x-y\|_{\textup{op}}. 
\]
\end{lmm}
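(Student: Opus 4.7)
The key point is that the edge $e=\{\alpha,\beta\}$ lies in neither $A$ nor $B$, which allows us to freeze the coordinates $q_l$ for $l\notin\{\alpha,\beta\}$ and recognize the inner sum over $(q_\alpha,q_\beta)$ as a bilinear form in the matrix $M(x)-M(y)$. Since the operator norm of that matrix is precisely $\|x-y\|_{\textup{op}}$, the bound drops out.

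The plan is as follows. Partition the edges of $A$ according to how they meet $\{\alpha,\beta\}$: since $e\notin A$, no edge of $A$ contains both endpoints, so we may write $A = A_\alpha \sqcup A_\beta \sqcup A_0$, where $A_\alpha$ consists of the edges containing $\alpha$ but not $\beta$, $A_\beta$ consists of the edges containing $\beta$ but not $\alpha$, and $A_0$ consists of the edges containing neither. Decompose $B$ in the same way. Now fix the $r-2$ coordinates $q_l$ for $l\neq \alpha,\beta$ and, using \eqref{pprod}, define
\[
u_i := \prod_{\{\alpha,c\}\in A_\alpha} x_{i\, q_c} \prod_{\{\alpha,c\}\in B_\alpha} y_{i\, q_c}\, , \qquad
v_j := \prod_{\{\beta,d\}\in A_\beta} x_{j\, q_d} \prod_{\{\beta,d\}\in B_\beta} y_{j\, q_d}\, ,
\]
and $P_0 := P(x,q,A_0)P(y,q,B_0)$. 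Then for each such freezing,
\[
P(x,q,A)P(y,q,B) = P_0\, u_{q_\alpha}\, v_{q_\beta}\, ,
\]
with $|P_0|\le 1$ and each $|u_i|,|v_j|\le 1$ since all coordinates of $x$ and $y$ lie in $[0,1]$.

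The inner sum over $(q_\alpha,q_\beta)$ becomes
\[
P_0\sum_{i,j\in[N]} u_i\, v_j\, (x_{ij}-y_{ij}) \;=\; P_0\cdot u^{t}\bigl(M(x)-M(y)\bigr)v\, ,
\]
which by the definition of the operator norm is bounded in absolute value by $|P_0|\cdot \|u\|\cdot \|v\|\cdot \|M(x)-M(y)\|_{\textup{op}} \le N\|x-y\|_{\textup{op}}$, since $\|u\|^2,\|v\|^2\le N$. Summing over the $N^{r-2}$ choices of the frozen coordinates gives the claimed $N^{r-1}\|x-y\|_{\textup{op}}$ bound. The only ``hard'' part is the combinatorial bookkeeping of the factorization, which is driven entirely by the hypothesis $e\notin A\cup B$; no cancellation or more delicate estimate is required.
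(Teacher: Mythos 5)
Your proof is correct and follows essentially the same route as the paper's: both split $A$ and $B$ according to incidence with $\alpha$ and $\beta$ (disjointness of the two incidence classes being exactly where $e\notin A\cup B$ is used), freeze the remaining $r-2$ coordinates, and bound the resulting bilinear form in $M(x)-M(y)$ by $\|u\|\,\|v\|\,\|x-y\|_{\textup{op}}\le N\|x-y\|_{\textup{op}}$. No substantive difference.
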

\begin{proof}
By relabeling the vertices of $K_r$ and redefining $A$ and $B$, we may assume that $\alpha =1$ and $\beta = 2$. 

Let $A_1$ be the set of all edges in $A$ that are incident to $1$. Let $A_2$ be the set of all edges in $A$ that are incident to $2$. Note that since $\{1,2\}\not\in A$, therefore $A_1$ and $A_2$ must be  disjoint. Similarly, let $B_1$ be the set of all edges in $B$ that are incident to $1$ and let $B_2$ be the set of all edges in $B$ that are incident to $2$. Let $A_3 = A\backslash (A_1\cup A_2)$ and $B_3 = B \backslash (B_1\cup B_2)$. By \eqref{pprod},
\[
P(x,q,A) = P(x,q,A_1)P(x,q,A_2)P(x,q,A_3)
\]
and 
\[
P(y,q,B) = P(y,q,B_1)P(y,q,B_2)P(y,q,B_3). 
\]
Thus,
\begin{align*}
&\sum_{q\in [N]^r}P(x,q,A)P(y,q,B)(x_{q_1 q_2} - y_{q_1 q_2}) \\
&=\sum_{q_3,\ldots, q_r} P(x, q, A_3) P(y,q,B_3) \biggl(\sum_{q_1, q_2} Q(x,y,q)(x_{q_1q_2}-y_{q_1q_2})\biggr),
\end{align*}
where
\[
Q(x,y,q) =  P(x,q,A_1)P(x,q,A_2)P(y,q,B_1)P(y,q,B_2).
\]
Now fix $q_3,\ldots, q_r$. Then $P(x,q,A_1)P(y,q,B_1)$ is a  function of $q_1$ only, and does not depend on $q_2$. Let $g(q_1)$ denote this function. Similarly, $P(x,q,A_2)P(y,q,B_2)$ is a  function of $q_2$ only, and does not depend on $q_1$. Let $h(q_2)$ denote this function. Both $g$ and $h$ are uniformly bounded by $1$. Therefore
\begin{align*}
\biggl|\sum_{q_1, q_2} Q(x,y,q)(x_{q_1q_2}-y_{q_1q_2})\biggr| &= \biggl|\sum_{q_1, q_2} g(q_1)h(q_2)(x_{q_1q_2}-y_{q_1q_2})\biggr|\\
&\le N\|x-y\|_{\textup{op}}. 
\end{align*}
Since this is true for all choices of $q_3,\ldots, q_r$ and $P$ is also uniformly bounded by $1$, this completes the proof of the lemma. 
\end{proof}
Let $A$ and $B$ be two sets of edges of $K_r$. For $x,y\in [0,1]^n$, define
\[
R(x,y,A,B) := \sum_{q\in [N]^r} P(x,q,A)P(y,q,B). 
\]
\begin{lmm}\label{comb2}
Let $A$, $B$, $A'$ and $B'$ be sets of edges of $K_r$ such that $A\cap B = A'\cap B' = \emptyset$ and $A\cup B = A'\cup B'$. Then
\[
|R(x,y,A,B)-R(x,y,A', B')|\le \frac{1}{2}r(r-1)N^{r-1}\|x-y\|_{\textup{op}}. 
\]
\end{lmm}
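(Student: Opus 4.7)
The plan is a telescoping argument that changes $(A,B)$ into $(A',B')$ one edge at a time, with each individual swap controlled by Lemma \ref{comb1}. Setting $E := A\cup B = A'\cup B'$, the disjointness hypotheses $A\cap B = A'\cap B' = \emptyset$ mean that each edge of $E$ is assigned to exactly one side in each of the two pairs, so the set of edges on which the two assignments disagree is $A\triangle A' = B\triangle B' \subseteq E$, with cardinality at most $|E|\le {r\choose 2} = \frac{1}{2}r(r-1)$.

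I would construct a sequence of disjoint pairs $(A_0,B_0), (A_1,B_1), \ldots, (A_s,B_s)$ starting at $(A,B)$ and ending at $(A',B')$, where consecutive pairs differ by the reassignment of a single edge. Concretely, at step $i$ pick any edge $e = \{\alpha,\beta\}$ on which $(A_i,B_i)$ and $(A',B')$ disagree. Suppose first that $e \in A_i\setminus A'$, so that $e\in B'$ and (by disjointness of $A_i$ and $B_i$) $e\notin B_i$. Set $A_{i+1} := A_i\setminus\{e\}$ and $B_{i+1} := B_i\cup\{e\}$, which are again disjoint. Since $A_i = A_{i+1}\sqcup\{e\}$ and $B_{i+1} = B_i\sqcup\{e\}$, the product rule \eqref{pprod} yields
\begin{align*}
R(x,y,A_i,B_i) - R(x,y,A_{i+1},B_{i+1})
&= \sum_{q\in[N]^r} P(x,q,A_{i+1})\,P(y,q,B_i)\,(x_{q_\alpha q_\beta} - y_{q_\alpha q_\beta})\, ,
\end{align*}
and since $e\notin A_{i+1}$ and $e\notin B_i$, Lemma \ref{comb1} applies and bounds the absolute value of this difference by $N^{r-1}\|x-y\|_{\textup{op}}$. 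The symmetric case $e\in B_i\setminus B'$ (where we move $e$ from $B_i$ to $A_{i+1}$) is handled identically and gives the same bound. Iterating until $(A_s,B_s) = (A',B')$ uses exactly $s = |A\triangle A'|$ steps.

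Summing the single-step estimates via the triangle inequality gives
\[
|R(x,y,A,B) - R(x,y,A',B')| \le s\, N^{r-1}\|x-y\|_{\textup{op}} \le \tfrac{1}{2}r(r-1)\,N^{r-1}\|x-y\|_{\textup{op}}\, ,
\]
as desired. The only point requiring any care — and it is not really an obstacle — is to confirm that disjointness of $A_i$ and $B_i$ is preserved at every step, since this is precisely what allows the hypothesis of Lemma \ref{comb1} (that the swapped edge lies in neither edge set on which $P$ is evaluated) to continue to hold throughout the telescoping.
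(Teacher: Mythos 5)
Your proof is correct and follows essentially the same route as the paper: the paper likewise establishes the single-edge swap case by writing the difference as $\sum_{q}P(x,q,A')P(y,q,B)(x_{q_\alpha q_\beta}-y_{q_\alpha q_\beta})$ and invoking Lemma \ref{comb1}, then handles the general case by ``moving one edge at a time.'' Your write-up merely makes explicit the telescoping sequence, the preservation of disjointness, and the count of at most $\binom{r}{2}$ steps, all of which the paper leaves implicit.
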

\begin{proof}
First, suppose that $e = \{\alpha, \beta\}$ is an edge such that $e\not\in A'$ and  $A = A'\cup \{e\}$. Since $A \cup B = A' \cup B'$ and $A\cap B = A'\cap B' = \emptyset$, this implies that $e\not \in B$ and $B'= B\cup \{e\}$. Thus,
\[
R(x,y,A,B)-R(x,y,A', B') = \sum_{q\in [N]^r}P(x,q,A')P(y,q,B)(x_{q_\alpha q_\beta} - y_{q_\alpha q_\beta}),
\]
and the proof is completed using Lemma \ref{comb1}. For the general case, simply `move' from the pair $(A, B)$ to the pair $(A',B')$ by `moving one edge at a time' and apply Lemma \ref{comb1} at each step. 
\end{proof}

\begin{lmm}\label{exupper22}
Let $g_{ij}$ denote the function $\partial T/\partial x_{ij}$, where $T$ is the function defined in equation \eqref{tdef}. Then for any $x,y\in [0,1]^n$,
\begin{align*}
\sum_{1\le i<j\le N} (g_{ij}(x)-g_{ij}(y))^2 &\le 8 m^2 k^2N\|x-y\|_{\textup{op}}\, . 
\end{align*}
\end{lmm}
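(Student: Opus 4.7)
The plan is to reduce the claim to Lemma \ref{comb2} by reinterpreting the sums $\sum_{i<j} g_{ij}(x) g_{ij}(y)$ and $\sum_{i<j} g_{ij}(x)^2$ as $R$-quantities on the complete graph $K_{2k-2}$. I would split
\[
\sum_{i<j}\bigl(g_{ij}(x)-g_{ij}(y)\bigr)^2 = \Bigl[\sum_{i<j}g_{ij}(x)^2 - \sum_{i<j}g_{ij}(x)g_{ij}(y)\Bigr] + \Bigl[\sum_{i<j}g_{ij}(y)^2 - \sum_{i<j}g_{ij}(x)g_{ij}(y)\Bigr],
\]
after which it suffices to bound each bracket by $4m^2 k^2 N \|x-y\|_{\textup{op}}$.

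To handle one of these brackets, I use formula \eqref{tderiv}, the symmetry $g_{ij}=g_{ji}$, and the relation $2\sum_{i<j}=\sum_{i\ne j}$ to write
\[
2\sum_{i<j} g_{ij}(x) g_{ij}(y) = \frac{1}{N^{2k-4}}\sum_{(a,b),(c,d)\in \vec E}\,\sum_{\substack{q,q'\in[N]^k \\ q_a=q'_c,\ q_b=q'_d \\ q_a\ne q_b}} P(x,q,E\setminus\{a,b\})\,P(y,q',E\setminus\{c,d\}),
\]
where $\vec E$ denotes the set of $2m$ ordered orientations of edges of $H$. The gluing $q_a=q'_c$, $q_b=q'_d$ identifies vertex $a\in[k]$ of the first copy with vertex $k+c$ of the second, and similarly $b$ with $k+d$; the remaining $2k-2$ free indices then form a single vector $Q\in[N]^{2k-2}$ on the vertex set of the resulting glued graph. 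Under this reparametrization the inner product becomes $P(x,Q,A)\,P(y,Q,B)$ on $K_{2k-2}$, where $A$ and $B$ are the (disjoint) images of $E\setminus\{a,b\}$ and $E\setminus\{c,d\}$, respectively.

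Applying the same rewriting to $2\sum_{i<j}g_{ij}(x)^2$ yields an identical formula with $x$ in place of $y$ in the second product; since $A\cap B=\emptyset$, the integrand collapses to $P(x,Q,A\cup B)$, and since $R(\cdot,\cdot,A\cup B,\emptyset)$ is independent of its second argument, the bracket becomes
\[
\frac{1}{N^{2k-4}}\sum_{(a,b),(c,d)\in \vec E}\Bigl[R(x,y,A\cup B,\emptyset) - R(x,y,A,B)\Bigr],
\]
modulo a lower-order correction discussed below. Lemma \ref{comb2} applied with $r=2k-2$ bounds each summand by $\frac{1}{2}(2k-2)(2k-3)N^{2k-3}\|x-y\|_{\textup{op}}\le 2k^2 N^{2k-3}\|x-y\|_{\textup{op}}$; with $|\vec E|^2=4m^2$ terms this gives $8m^2 k^2 N\|x-y\|_{\textup{op}}$ for the doubled bracket, hence $4m^2 k^2 N\|x-y\|_{\textup{op}}$ per bracket, which combined yield the claimed $8m^2 k^2 N\|x-y\|_{\textup{op}}$.

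The main subtlety is the constraint $q_a\ne q_b$: the unrestricted $R(\cdot,\cdot,A,B)$ sum runs over all $Q\in[N]^{2k-2}$, including tuples where the two identified vertices of $K_{2k-2}$ take equal values, whereas the original sum excludes these. A telescoping of $P(x,Q,A)P(y,Q,B)-P(x,Q,A\cup B)$ in the spirit of Lemma \ref{comb1}, restricted to the hyperplane where those two indices coincide, shows that this correction is $O(m\, N^{2k-4}\|x-y\|_{\textup{op}})$ per summand and hence only $O(m^3\|x-y\|_{\textup{op}})$ after the prefactor $1/N^{2k-4}$ and the sum over $(a,b),(c,d)$ --- strictly lower order than $N\|x-y\|_{\textup{op}}$ and comfortably absorbed into the slack between the exact coefficient $\frac{1}{2}(2k-2)(2k-3)$ and its rounded bound $2k^2$.
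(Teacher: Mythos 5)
Your proposal is correct and follows essentially the same route as the paper: expand the squared differences into products indexed by ordered pairs of edges of $H$, glue the two copies of $[N]^k$ along the chosen edges to get sums over $[N]^{2k-2}$, and invoke Lemma \ref{comb2} with $r=2k-2$. The one substantive divergence is your treatment of the diagonal $q_a=q_b$: you excise it and estimate a correction term by a Lemma \ref{comb1}-type argument on the coincidence locus, whereas the paper avoids any correction by \emph{adding} the (nonnegative) diagonal contributions with suitable weights $\theta_{ij},\gamma_{ij}$, so that the weighted sum $\sum_{i,j}\theta_{ij}(g_{ij}(x)-g_{ij}(y))^2\ge\sum_{i<j}(g_{ij}(x)-g_{ij}(y))^2$ equals the unconstrained gluing sum exactly; you may want to adopt that device, since your absorption of the $O(m^3)$ correction into the slack of the constant, while fine in the regime where the lemma is used, is the only part of your argument that is not airtight for every $N$.
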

\begin{proof}
Recall equation \eqref{tderiv}, that is, for any $1\le i<j\le N$, 
\begin{align*}
g_{ij}(x) = \fpar{T}{x_{ij}} &= \frac{1}{N^{k-2}}\sum_{\{a,b\}\in E}\sum_{\substack{q\in [N]^k\\ \{q_a,  q_b\} = \{i,j\}}} \prod_{\substack{\{l,l'\}\in E\\ \{l,l'\}\ne \{a, b\}}}x_{q_lq_{l'}}\, .  
\end{align*}
Although differentiating with respect to $x_{ii}$ does not make sense, let $g_{ii}$ be the function defined using the same formula as above. When $i>j$, let $g_{ij}=g_{ji}$. Fix $x,y\in [0,1]^n$. Define for any $q\in [N]^k$ and $\{a,b\}\in E$
\[
D(q,\{a,b\}) := \prod_{\substack{\{l,l'\}\in E\\ \{l,l'\}\ne \{a, b\}}}x_{q_lq_{l'}} - \prod_{\substack{\{l,l'\}\in E\\ \{l,l'\}\ne \{a, b\}}}y_{q_lq_{l'}}.
\]
Define
\begin{align*}
&\theta_{ij}  := 
\begin{cases}
2 & \text{ if } i=j,\\
1/2 & \text{ if } i\ne j,
\end{cases}
&\gamma_{ij} := 
\begin{cases}
2 & \text{ if } i=j,\\
1 & \text{ if } i\ne j.
\end{cases}
\end{align*}
Then note that
\begin{align*}
&\sum_{i,j=1}^N \theta_{ij}(g_{ij}(x)-g_{ij}(y))^2\\
&= \frac{1}{N^{2k-4}}\sum_{i,j=1}^N\theta_{ij}\biggl(\sum_{\{a,b\}\in E} \sum_{\substack{q\in [N]^k\\ \{q_a,q_b\}=\{i,j\}}} D(q, \{a,b\})\biggr)^2\\
&= \frac{1}{N^{2k-4}}\sum_{i,j=1}^N\sum_{\substack{\{a,b\}\in E\\ \{c,d\}\in E}} \sum_{\substack{q\in [N]^k\\ \{q_a, q_b\}=\{i,j\}}}\sum_{\substack{s\in [N]^k\\ \{s_c, s_d\}=\{i,j\}}}\theta_{ij}D(q, \{a,b\})D(s, \{c,d\})\\
&= \frac{1}{N^{2k-4}}\sum_{\substack{\{a,b\}\in E\\ \{c,d\}\in E}}\sum_{q\in [N]^k} \sum_{\substack{s\in [N]^k \\ \{s_c, s_d\}=\{q_a,q_b\}}}\gamma_{q_aq_b}D(q, \{a,b\})D(s, \{c,d\}). 
\end{align*}
Now fix two edges $\{a,b\}$ and $\{c,d\}$ in $E$. Relabeling vertices if necessary, assume that $c=k-1$ and $d=k$. Let $r = 2k-2$. For any $t\in [N]^r$, define two vectors $q(t)$ and $s(t)$ in $[N]^k$ as follows. For $i=1,\ldots,k$, let $q_i(t) = t_i$. For $i=1,\ldots, k-2$, let $s_i(t)=t_{i+k}$. Let $s_{k-1}(t) = t_a$ and $s_k(t)=t_b$. With this definition, it is clear that
\begin{align*}
&\sum_{q\in [N]^k} \sum_{\substack{s\in [N]^k\\ \{s_c, s_d\} = \{q_a, q_b\}}}\gamma_{q_aq_b}D(q, \{a,b\})D(s, \{c,d\})\\
&=\sum_{q\in [N]^k} \sum_{\substack{s\in [N]^k\\ s_c=q_a,\, s_d=q_b}}D(q, \{a,b\})D(s, \{c,d\}) \\
&\qquad + \sum_{q\in [N]^k} \sum_{\substack{s\in [N]^k\\ s_c=q_b,\, s_d=q_a}}D(q, \{a,b\})D(s, \{c,d\})\, .% \\
%&= 2\sum_{t\in [N]^r} D(q(t), \{a,b\})D(s(t), \{c,d\}).
\end{align*}
Note that the first term on the right-hand side is exactly equal to 
\[
\sum_{t\in [N]^r} D(q(t), \{a,b\})D(s(t), \{c,d\})\, .
\]
Below, we will get a bound on this term. The same upper bound will hold for the other term by symmetry.

Next, define two subsets of edges $A$ and $B$ of $K_r$ as follows. Let $A$ be the set of all edges $\{l,l'\}$ such that $\{l,l'\}\in E\backslash\{\{a,b\}\}$. Let $B$ be the set of all edges $\{\phi(l), \phi(l')\}$ such that $\{l,l'\}\in E\backslash\{\{k-1,k\}\}$, where $\phi:[k]\ra[r]$ is the map
\[
\phi(x) =
\begin{cases}
x+k &\text{ if $x\ne k-1$ and $x\ne k$,}\\
a &\text{ if $x=k-1$,}\\
b &\text{ if $x=k$.}
\end{cases}
\]
By the above construction, $q_l(t)=t_l$ and $s_l(t) = t_{\phi(l)}$. Therefore it is easy to see, for instance, that
\begin{align*}
&\sum_{t\in [N]^r}  \prod_{\substack{\{l,l'\}\in E\\ \{l,l'\}\ne \{a, b\}}}x_{q_l(t)q_{l'}(t)} \prod_{\substack{\{l,l'\}\in E\\ \{l,l'\}\ne \{k-1,k\}}}y_{s_l(t)s_{l'}(t)}\\
&= \sum_{t\in [N]^r}  \prod_{\substack{\{l,l'\}\in E\\ \{l,l'\}\ne \{a, b\}}}x_{t_lt_{l'}} \prod_{\substack{\{l,l'\}\in E\\ \{l,l'\}\ne \{k-1,k\}}}y_{t_{\phi(l)} t_{\phi(l')}}\\
&= R(x,y, A, B).
\end{align*}
Carrying out similar computations for the remaining terms in $D(q(t), \{a,b\})D(s(t), \{c,d\})$, we get
\begin{align*}
&\sum_{t\in [N]^r} D(q(t), \{a,b\})D(s(t), \{c,d\}) \\
&= R(x,y, A\cup B, \emptyset) - R(x,y,B, A) - R(x,y,A,B) + R(x,y,\emptyset, A\cup B). 
\end{align*}
Lastly, note that $A\cap B = \emptyset$ since for any $\{l,l'\}\in E\backslash \{\{k-1,k\}\}$, at least one among $\phi(l)$ and $\phi(l')$ must be strictly bigger than $k$ and therefore $\{\phi(l),\phi(l')\}$ cannot be an element of $A$. The proof is now easily completed by applying Lemma \ref{comb2}.
\end{proof}
With the help of Lemma \ref{exupper21} and Lemma \ref{exupper22}, we are now ready to prove Lemma \ref{test2}.
\begin{proof}[Proof of Lemma \ref{test2}]
Take any $\ep>0$ and let 
\[
\tau = \frac{\ep^2}{64m^2k^2}\, .
\]
Let $\mw(\tau)$ be as in Lemma \ref{exupper21}. For each $W\in \mw(\tau)$, let $y(W)\in [0,1]^n$ be a vector such that $\|M(y)-W\|_{\textup{op}}\le N\tau$. If for some $W$ there does not exist any such $y$, leave $y(W)$ undefined. Let $g_{ij} = \partial T/\partial x_{ij}$, as in Lemma~\ref{exupper22}. Let $g:[0,1]^n \ra\rr^n$ be the function whose $(i,j)$th coordinate is $g_{ij}$. Define
\begin{align*}
\dd(\ep) &:= \big\{g(y) : \text{$y= y(W)$ for some $W\in \mw(\tau)$}\big\}.  
\end{align*}
Then by Lemma \ref{exupper21}
\begin{align*}%\label{desize}
|\dd(\ep)|&\le |\mw(\tau)| \le e^{34(N/\tau^2)\log(51/\tau^2)}. 
\end{align*}
We claim that the set $\dd(\ep)$ satisfies the requirements of Theorem \ref{uppertailthm}. To see this, take any $x\in [0,1]^n$. By Lemma \ref{exupper21}, there exists $W\in \mw(\tau)$ such that $\|M(x) - W\|_{\textup{op}} \le N\tau$. In particular, this means that $y := y(W)$ is defined, and so
\begin{align*}
\|x-y\|_{\textup{op}} &= \|M(x)-M(y)\|_{\textup{op}} \\
&\le  \|M(x)-W\|_{\textup{op}}+ \|W-M(y)\|_{\textup{op}}\\
&\le 2N\tau\, .
\end{align*}
Therefore by Lemma \ref{exupper22}, 
\begin{align*}%\label{expart1}
\sum_{1\le i<j\le N} (g_{ij}(x)-g_{ij}(y))^2 &\le 16m^2 k^2N^2\tau\, . 
\end{align*}
Let $z= g(x)$ and $v = g(y)$. Then $v\in \dd(\ep)$, and by the above inequality,
\begin{align*}
\sum_{1\le i<j\le N} (z_{ij} - v_{ij})^2  &\le  16 m^2k^2N^2\tau = \frac{N^2\ep^2}{4}\le {N\choose 2} \ep^2.
\end{align*}
This proves the claim that $\dd(\ep)$ satisfies the requirements of Theorem \ref{uppertailthm}. This completes the proof of Lemma \ref{test2}.
\end{proof}
The next step is to understand the properties of the rate function $\phi_p(t)$ corresponding to $T$. First, we need a simple lemma.
\begin{lmm}\label{abineq}
For any $r$ and any $a_1,\ldots, a_r, b\in [0,1]$, 
\begin{equation*}
\prod_{i=1}^r (a_i + b(1-a_i)) \ge (1-b^r) \prod_{i=1}^r a_i + b^r\,.
\end{equation*}
\end{lmm}
\begin{proof}
The proof is by induction on $r$. The inequality is an equality for $r=1$. Suppose that it holds for $r-1$. Then %by the nonnegativity of all terms,
\begin{align*}
\prod_{i=1}^r (a_i + b(1-a_i)) &\ge \biggl((1-b^{r-1}) \prod_{i=1}^{r-1} a_i + b^{r-1}\biggr)((1-b)a_r + b)\\
&= (1-b^{r-1})(1-b) \prod_{i=1}^{r} a_i + b^{r-1}(1-b)a_r + (1-b^{r-1})b \prod_{i=1}^{r-1} a_i+b^r\\
&\ge ((1-b^{r-1})(1-b) + b^{r-1} (1-b) + (1-b^{r-1})b)\prod_{i=1}^r a_i + b^r\\
&= (1-b^r)\prod_{i=1}^r a_i + b^r\,.
\end{align*}
This completes the induction.
\end{proof}
\begin{lmm}\label{lipschitz}
Let $\phi_p(t)$ be defined as in \eqref{phidef}, with $f=T$ and $n= N(N-1)/2$. Let $l$ be the element of $[0,1]^n$ whose coordinates are all equal to $1$, and let $t_0:= T(l)/n$. Then for any $0<\delta< t< t_0$, 
\begin{align*}
\phi_p(t-\delta) &\ge \phi_p(t)-\biggl(\frac{\delta}{t_0-t}\biggr)^{1/m}n \log (1/p)\, .
\end{align*}
\end{lmm}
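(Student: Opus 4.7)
The plan is a standard interpolation argument: given any near-minimizer $x$ for $\phi_p(t-\delta)$, I will move $x$ toward the all-ones vector $l$ to obtain a point $y$ satisfying $T(y) \ge tn$, and then bound the resulting increase in the rate $I_p$. Concretely, define
\[
y := (1-\mu) x + \mu l, \qquad \mu := \left(\frac{\delta}{t_0 - t}\right)^{1/m} \wedge 1,
\]
so that $y_e = (1-\mu) x_e + \mu$ for every edge coordinate.

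The key estimate is a \emph{subgraph-count lower bound} for the interpolation. Because $T$ is multilinear and each $y_e = (1-\mu) x_e + \mu \cdot 1$, I will expand
\[
T(y) = \sum_{S \subseteq E} \mu^{|S|}(1-\mu)^{m-|S|}\, T_S(x),
\]
where $T_S(x)$ is obtained from $T(x)$ by replacing $x_e$ by $1$ for every $e \in S$. Since $x_e \in [0,1]$, each $T_S(x) \ge T_\emptyset(x) = T(x)$, and in particular $T_E(x) = T(l) = t_0 n$. Keeping only the $S = \emptyset$ and $S = E$ terms,
\[
T(y) \ge T(x) + \mu^m \bigl(t_0 n - T(x)\bigr).
\]
Plugging in $T(x) \ge (t-\delta) n$ and the choice $\mu^m = \delta/(t_0 - t)$ (which exceeds $\delta/(t_0 - t + \delta)$), an elementary rearrangement gives $T(y) \ge tn$. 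This is the main computation; no other structural feature of $H$ is used beyond multilinearity and monotonicity of $T$, so this step is clean.

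For the rate, I will use convexity of $I_p$, which is just the $u \mapsto u\log(u/p) + (1-u)\log((1-u)/(1-p))$ Bernoulli KL divergence. Coordinatewise,
\[
I_p(y_{ij}) \le (1-\mu) I_p(x_{ij}) + \mu I_p(1) \le I_p(x_{ij}) + \mu \log(1/p),
\]
since $I_p(1) = \log(1/p)$ and $I_p \ge 0$. Summing over the $n$ coordinates,
\[
I_p(y) \le I_p(x) + \mu n \log(1/p) \le I_p(x) + \left(\frac{\delta}{t_0 - t}\right)^{1/m} n \log(1/p).
\]
Since $T(y) \ge tn$, this $y$ is admissible in the variational problem defining $\phi_p(t)$, so $\phi_p(t) \le I_p(y)$. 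Taking the infimum over near-minimizers $x$ of $\phi_p(t-\delta)$ (and sending any slack to zero) yields the claimed inequality. The only ``obstacle'' is the multilinear expansion of $T(y)$; once one notices that the $|S|=m$ term alone drives the whole lower bound, the argument is essentially mechanical.
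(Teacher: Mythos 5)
Your proposal is correct and follows essentially the same route as the paper: interpolate $y=(1-\mu)x+\mu l$, show $T(y)\ge(1-\mu^m)T(x)+\mu^m T(l)$, and use convexity of $I_p$ together with $I_p(l)=n\log(1/p)$; your expansion-and-termwise-bound $T_S(x)\ge T(x)$ is exactly the content of the paper's inequality \eqref{abineq}, and fixing $\mu^m=\delta/(t_0-t)$ up front versus the paper's adaptive choice of the interpolation parameter is an immaterial difference. (One pedantic point common to both write-ups: because of the convention $x_{ii}=0$, the substitution in $T_S$ should put $l_{q_lq_{l'}}$ rather than the literal constant $1$, so that $T_E(x)=T(l)$ and the degenerate index tuples contribute zero on both sides.)
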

\begin{proof}
%If $tn > \|T\|$, then $\phi_p(t)=-\infty$ since there is no $x$ that satisfies $T(x)> \|T\|$. Therefore assume that $tn \le \|T\|$. Note that $T(l)=\|T\|$. 
Take any $x\in [0,1]^n$ such that $T(x) \ge (t-\delta)n$ and $x$ minimizes $I_p(x)$ among all $x$ satisfying this inequality. If $T(x)\ge tn$, then we immediately have $\phi_p(t)\le I_p(x)=\phi_p(t-\delta)$, and there is nothing more to prove. So let us assume that $T(x)<tn$. Let 
 \[
 \ep := \biggl(\frac{tn - T(x)}{T(l)-T(x)}\biggr)^{1/m}\, .
 \]
For each $1\le i<j\le N$, let  
\[
y_{ij} := x_{ij} + \ep(1-x_{ij})\, .  
\] 
Let $y_{ji}=y_{ij}$ and $y_{ii}=0$. Then $y\in [0,1]^n$, and by Lemma \ref{abineq},
\begin{align*}
T(y) &\ge (1-\ep^{m}) T(x) + \ep^{m} T(l) = tn\, .
\end{align*}
Thus, by the convexity of $I_p$,
\begin{align*}
\phi_p(t) &\le I_p(y) = I_p((1-\ep)x + \ep l)\\
&\le (1-\ep)I_p(x) + \ep I_p(l)\\
&\le I_p(x) + \ep\, n \log (1/p) = \phi_p(t-\delta) + \ep\, n \log (1/p)\, .
\end{align*}
Since $T(x)\ge (t-\delta)n$, %$\|T\|\ge N(N-1)\cdots (N-k+1)\ge N^k/2$, and 
\begin{align*}
\ep^{m} &\le  \frac{tn - (t-\delta)n}{T(l)-(t-\delta)n} \le \frac{\delta }{t_0-t}\,  .
\end{align*}
This completes the proof of the lemma. 
\end{proof}
\begin{lmm}\label{phiupper}
For any $p$ and $t$,
\[
\phi_p(t)\le \frac{1}{2}(\lceil t^{1/k}N\rceil+k)^2\log(1/p)\, .
\]
\end{lmm}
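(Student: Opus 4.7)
The plan is to exhibit an explicit feasible $x \in [0,1]^n$ via a ``planted clique'' construction and verify that it meets the bound. Set $s := \lceil t^{1/k} N \rceil + k$. In the principal case $s \le N$, fix any subset $S \subseteq [N]$ with $|S| = s$ and define
\[
x_{ij} = \begin{cases} 1 & \text{if } i,j \in S, \\ p & \text{otherwise,} \end{cases}
\]
with the convention $x_{ii} = 0$. Since $I_p(p) = 0$ and $I_p(1) = \log(1/p)$, only the $\binom{s}{2}$ entries indexed by pairs inside $S$ contribute to $I_p(x)$, giving
\[
I_p(x) = \binom{s}{2}\log(1/p) \le \tfrac{1}{2} s^2 \log(1/p),
\]
which is precisely the desired upper bound.

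To check feasibility, restrict the sum in \eqref{tdef} to injective maps $q : [k] \to S$: every coordinate $q_l$ then lies in $S$ and is distinct from $q_{l'}$, so every factor $x_{q_l q_{l'}}$ in the product equals $1$. All other terms are nonnegative, hence
\[
T(x) \ge \frac{s(s-1)\cdots(s-k+1)}{N^{k-2}} \ge \frac{(t^{1/k} N)^k}{N^{k-2}} = t N^2 \ge t n,
\]
where the middle step uses that $s - j \ge t^{1/k} N + k - j \ge t^{1/k} N$ for each $0 \le j \le k-1$; this is exactly what the $+k$ slack in the definition of $s$ buys us, absorbing the ceiling and the descending factors simultaneously. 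Consequently $\phi_p(t) \le I_p(x)$, completing the main case.

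For the residual case $s > N$, which only arises when $t$ is close to the maximum feasible value $t_0 := T(l)/n$ (with $l$ the all-ones vector), I would simply take $x = l$: then $T(l) \ge t_0 n \ge t n$ and $I_p(l) = \binom{N}{2}\log(1/p) \le \tfrac{1}{2} s^2 \log(1/p)$ since $s^2 > N^2 \ge N(N-1)$. The entire argument is soft and I anticipate no real obstacle; the one point requiring mild care is the rounding check $s - j \ge t^{1/k} N$, and this is precisely why $s$ is defined with an additive $+k$ rather than a bare ceiling.
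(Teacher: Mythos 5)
Your construction is exactly the one in the paper: plant a clique of size $\lceil t^{1/k}N\rceil+k$ with entries $1$ and set all remaining entries to $p$, lower-bound $T(x)$ by the contribution of injective $q$ mapping into the clique, and use $I_p(p)=0$, $I_p(1)=\log(1/p)$. Your treatment is correct (and in fact slightly more careful than the paper's, which does not separate out the case $s>N$), so there is nothing to add.
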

\begin{proof}
Let $r := \lceil t^{1/k}N\rceil+k$. Define $x\in [0,1]^n$ as
\[
x_{ij} := 
\begin{cases}
1 & \text{ if } 1\le i<j\le r\, ,\\
p & \text{ otherwise.}
\end{cases}
\]
Then 
\begin{align*}
T(x) &\ge \frac{1}{N^{k-2}} \sum_{q\in [r]^k} \prod_{\{l,l'\}\in E} x_{q_lq_{l'}} \\
&\ge \frac{r(r-1)\cdots(r-k+1)}{N^{k-2}} \ge tN^2\ge tn\, , 
\end{align*}
and since $I_p(p)=0$, 
\begin{align*}
I_p(x) &= \sum_{i<j} I_p(x_{ij}) \le \frac{1}{2} r^2 \log(1/p)\, .
\end{align*}
This proves the claim.
\end{proof}
\begin{proof}[Proof of the upper bound in Theorem \ref{graphthm}]
The task now is to pull together all the information obtained above, for use in Theorem \ref{uppertailthm}. As intended, we work with $f=T$. Take $t = \kappa p^m$ for some fixed $\kappa>0$. Let $\delta$ and $\ep$ be two positive real numbers, both less than $t$, to be chosen later.  Note that $\delta < t<  \kappa p^{2m/k}$ since $t = \kappa p^m$ and $k>2$. Assume that $\delta$ and $\ep$ are bigger than $N^{-1/2}$. Note that $p$ is already assumed to be bigger than $N^{-1/2}$ in the statement of the theorem.

%Throughout this proof, $C$ will denote any constant that may depend only on the graph $H$. 

 Recall that the indexing set for quantities like $b_i$ and $c_{ij}$, instead of being $\{1,\ldots,n\}$, is now $\{(i,j): 1\le i<j\le N\}$. For simplicity, we will write $(ij)$ instead of $(i,j)$. Throughout, $C$ will denote any constant that depends only on the graph $H$, the constant $\kappa$, and nothing else. From Lemma~\ref{test1}, we have the estimates
\[
a\le N^2\, , \ \ b_{(ij)} \le C\, , 
\] 
and
\[
c_{(ij)(i'j')} \le 
\begin{cases}
C N^{-1} \ &\text{ if $|\{i,j,i',j'\}| = 2$ or $3$\, ,} \\
C N^{-2} & \text{ if $|\{i,j,i',j'\}| = 4$\, .}
\end{cases} 
\]
%Let 
%\[
%r_1 := \frac{2m}{k}-r\, , \ \  r_2 := \frac{2m}{k}-2r\, .
%\]
Let $\theta := \delta^{-1} p^{2m/k}$. By Lemma \ref{phiupper}, 
\[
K \le Cp^{2m/k} \log N\, . 
\]
Using the above bounds, we get
\[
\alpha \le CN^2 \log N\, , \ \ \beta_{(ij)} \le C \theta \log N\, ,
\]
and 
\begin{align*}
\gamma_{(ij)(i'j')} %&\le CN^{-2} \delta^{-1}\theta \log N  \\
%&\qquad + 
&\le 
\begin{cases}
C N^{-1} \theta\log N  &\text{ if $|\{i,j,i',j'\}| = 2$ or $3$,} \\
C N^{-2} \delta^{-1}\theta\log N & \text{ if $|\{i,j,i',j'\}| = 4$.}
\end{cases} 
\end{align*}
Therefore, we have the estimates
\begin{align*}
\sum_{(ij)} \beta_{(ij)}^2 &\le CN^2\theta^2 (\log N)^2\, , \ \ \sum_{(ij)} b_{(ij)}^2 \le CN^2\, ,
\end{align*}
and by Lemma \ref{test2}, 
\begin{align*}
\log|\dd((\delta\ep)/(4K))| &\le \frac{CN\theta^4}{ \ep^4}\log \frac{CK}{\delta \ep}\\
&\le \frac{CN\theta^4(\log N)^5}{\ep^4} \, .
\end{align*}
Combining the last three estimates, we see that the complexity term in Theorem \ref{uppertailthm} is bounded above by
\begin{align*}
&CN^2 \ep\theta\log N +  \frac{CN\theta^4(\log N)^5}{\ep^4}\, .
\end{align*}
Taking $\ep =  N^{-1/5}\theta^{3/5}(\log N)^{4/5}$, the above bound simplifies to
\begin{align*}
CN^{9/5} \theta^{8/5} (\log N)^{9/5}\, .
\end{align*}
Next, note that by the bounds obtained above and the inequality $\delta > N^{-1/2}$, 
\begin{align*}
&\sum_{(ij)} \alpha \gamma_{(ij)(ij)} \le CN^3\theta (\log N)^2\, ,\\
%\end{align*}
%\begin{align*}
&\sum_{(ij), (i'j')} \alpha \gamma_{(ij)(i'j')}^2 \le %CN^2(\log N) (N^3 N^{-2}\theta^2 \log^2 N + N^4 N^{-4} \delta^{-2}\theta^2 \log^2 N)\\
%&\le CN^2(N + \delta^{-2}) \theta^2 \log^3 N\le 
CN^3 \theta^2 (\log N)^3\, ,\\
%\end{align*}
%\begin{align*}
&\sum_{(ij), (i'j')} \beta_{(ij)} (\beta_{(i'j')} +4)\gamma_{(ij)(i'j')} %\\
%&\le C \theta^2 (\log N)^2(N^3 N^{-1}\theta \log N + N^4 N^{-2}\delta^{-1} \theta \log N)\\
%&
\le CN^2 \delta^{-1}\theta^3 (\log N)^3\, ,\\
%\end{align*}
%\begin{align*}
&\biggl(\sum_{(ij)} \beta_{(ij)}^2\biggr)^{1/2}\biggl(\sum_{(ij)} \gamma_{(ij)(ij)}^2\biggr)^{1/2} \le CN\theta^2 (\log N)^2\, ,\\
%\end{align*}
%and 
%\begin{align*}
&\sum_{(ij)} \gamma_{(ij)(ij)} \le CN\theta \log N\, . 
\end{align*}
The above estimates show that the smoothness term in Theorem \ref{uppertailthm} is bounded above by a constant times
\begin{align*}
N^{3/2}\theta (\log N)^{3/2} + N\delta^{-1/2}\theta^{3/2}(\log N)^{3/2} + N\theta^2 (\log N)^2\, .
\end{align*}
Putting $\eta := p^{2m/k}$,  and recalling that $N^{-1/(m+3)}\le p\le 1-N^{-1}$, we see that this is bounded by a constant times 
\[
N^{3/2} \delta^{-1}\eta (\log N)^{3/2} + N\delta^{-2}\eta^{3/2}(\log N)^2 \, .
\]
Since $\delta > N^{-1/2}$, we can further simplify this upper bound to
\[
N^{3/2}\delta^{-1}\eta(\log N)^2. 
\]
Combining the bounds on the complexity term and the smoothness term, we get that 
\begin{align*}
\log\pp(T(Y) \ge tn) &\le -\phi_p(t-\delta) + CN^{9/5} \delta^{-8/5} \eta^{8/5} (\log N)^{9/5} \\
&\qquad + C N^{3/2} \delta^{-1}\eta (\log N)^2\, . 
\end{align*}
By Lemma \ref{lipschitz}, 
\[
-\phi_p(t-\delta) \le -\phi_p(t) + C\delta^{1/m} N^2\log N. 
\]
Taking
\[
\delta = N^{-m/(5+8m)} \eta^{8m/(5+8m)} (\log N)^{4m/(5+8m)}
\]
gives
\begin{align}
&\log\pp(T(Y)\ge tn) \label{compl}\\
&\le -\phi_p(t) + CN^{(9+16m)/(5+8m)} \eta^{8/(5+8m)}(\log N)^{(9+8m)/(5+8m)}\nonumber\\
&\qquad + CN^{(15+26m)/(10+16m)} \eta^{5/(5+8m)} (\log N)^{(10+12m)/(5+8m)}\, .\nonumber
\end{align}
Now note that since $p > N^{-1/2}$, therefore
\begin{align*}
\frac{N^{(9+16m)/(5+8m)} \eta^{8/(5+8m)}}{N^{(15+26m)/(10+16m)} \eta^{5/(5+8m)}} &= N^{(3+6m)/(10+16m)} p^{6m/k(5+8m)}\\
&\ge N^{(3+6m)/(10+16m)} N^{-3m/(5+8m)}\\
&= N^{3/(10+16m)}\, .
\end{align*}
This shows that the first term on the right-hand side in \eqref{compl} dominates the second when $N$ is sufficiently large. Therefore, when $N$ is large enough,
\begin{align*}
&\log \pp(T(Y)\ge tn)\\
 &\le -\phi_p(t) + CN^{(9+16m)/(5+8m)} p^{16m/k(5+8m)}(\log N)^{(9+8m)/(5+8m)}\, .
\end{align*}
Written differently, this is
\begin{align*}
&\frac{\phi_p(t)}{-\log\pp(T(Y)\ge tn)}\\
&\le 1 + \frac{CN^{(9+16m)/(5+8m)} p^{16m/k(5+8m)}(\log N)^{(9+8m)/(5+8m)}}{-\log\pp(T(Y)\ge tn)}\, .
\end{align*}
By  \cite[Theorem 1.2 and Theorem 1.5]{JOR04}, 
\begin{equation}\label{janson}
-\log \pp(T(Y)\ge tn) \ge C N^2 p^\Delta\, , 
\end{equation}
where $\Delta$ is the maximum degree of $H$, provided that $p\ge N^{-1/\Delta}$ and $N$ is sufficiently large. The lower bound on $p$ is already assumed in the statement of the theorem. Therefore, 
\begin{align*}
&\frac{\phi_p(t)}{-\log\pp(T(Y)\ge tn)}\\
&\le 1 + CN^{-1/(5+8m)} p^{-\Delta + 16m/k(5+8m)}(\log N)^{(9+8m)/(5+8m)}\, .
\end{align*}
A minor verification using the assumption $p\ge N^{-1/4}$ shows that the $\ep$ and $\delta$ chosen above are both bigger than $N^{-1/2}$, as required. To complete the proof of the upper bound, notice that $\ee(X)$ is asymptotic to $p^m$ since $p\ge N^{-1/(m+3)}$. 
\end{proof}
\begin{proof}[Proof of the lower bound in Theorem \ref{graphthm}]
By Lemma \ref{lipschitz}, Lemma \ref{test1}, and the lower bound in Theorem~\ref{uppertailthm},  
\begin{align*}
\log \pp(T(Y)\ge tn) &\ge -\phi_p(t) - C N^{-1/2m}N^2\log N\, .
\end{align*}
Therefore, again applying \eqref{janson}, we get
\begin{align*}
\frac{\phi_p(t)}{-\log\pp(T(Y)\ge tn)} \ge 1 - C N^{-1/2m} p^{-\Delta} \log N\, .
\end{align*}
This completes the proof of the lower bound.
\end{proof}

\section{Proof of Theorem \ref{arith}}
In this section, all indices range over $\zz/n\zz$, and all additions and subtractions of indices are modulo $n$. As usual, $C$ will denote any universal constant.

Let $Y = (Y_0,\ldots, Y_{n-1})$ be a vector of i.i.d.\ $Bernoulli(p)$ random variables. Define $f:[0,1]^{\zz/n\zz} \ra\rr$ as 
\[
f(x) := \frac{1}{n}\sum_{i,j} x_i x_{i+j} x_{i+2j}\, .
\]
Then 
\begin{equation}\label{afbd}
a:= \|f\|\le n\, .
\end{equation}
Let $f_i := \partial f/\partial x_i$ and $f_{ij} := \partial^2 f /\partial x_i \partial x_j$. Then
\begin{align*}
f_i(x) &= \frac{1}{n} \sum_{j} (x_{i+j} x_{i+2j} + x_{i-j} x_{i+j} + x_{i-2j} x_{i-j})\, .
\end{align*} 
From this expression, it is clear that
\begin{equation}\label{afbd2}
b_i := \|f_i\|\le C\, , \ \ c_{ij}:= \|f_{ij}\|\le \frac{C}{n}\, .
\end{equation}
For each $j$, define the function $e_j:\zz/n\zz\ra\cc$ as
\[
e_j(k) := \frac{1}{\sqrt{n}}e^{2\pi\I jk/n}\, ,
\]
where $\I = \sqrt{-1}$. These functions form an orthonormal system, in the sense that
\begin{align*}
\sum_k e_j(k)\overline{e_{j'}(k)} = \delta_{j-j'}\, ,
\end{align*}
where $\delta$ is the Kronecker delta function, that is,
\begin{align*}
\delta_j := 
\begin{cases}
1 & \text{ if } j=0\, ,\\
0 & \text{ otherwise.}
\end{cases}
\end{align*}
For any $x\in\rr^{\zz/n\zz}$, define its discrete Fourier transform $\hx\in \cc^n$ as 
\[
\hx_j := \sum_k x_k e_j(k)\, .
\]
The orthonormality of the $e_j$'s implies the inversion formula
\begin{align*}
\sum_j \hx_j \overline{e_k(j) } &= \sum_{j,l}x_le_j(l)  \overline{e_k(j) }\\
&= \sum_{j,l}x_l e_l(j)  \overline{e_k(j) }=x_k\, .
\end{align*}
Moreover, it also implies the Plancherel identity
\begin{align*}
\sum_j |\hx_j|^2 &= \sum_{j,k,l} x_kx_l e_j(k)\overline{e_j(l)} \\
&= \sum_{j,k,l} x_kx_l e_k(j)\overline{e_l(j)} = \sum_k x_k^2\, .
\end{align*}
\begin{lmm}\label{fourier}
For any $x, y\in [0,1]^{\zz/n\zz}$, 
\[
\sum_i (f_i(x)-f_i(y))^2 \le Cn^{1/2}\max_i |\hx_i - \hat{y}_i|\, .
\]
\end{lmm}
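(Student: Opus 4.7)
The plan is to apply Fourier analysis on $\zz/n\zz$. The derivative may be written as
\[
f_i(x) = \frac{1}{n}\sum_j \big(x_{i+j}x_{i+2j} + x_{i-j}x_{i+j} + x_{i-2j}x_{i-j}\big),
\]
which decomposes $f_i$ into three pieces of the generic form $h^{\alpha\beta}_i(x) := \tfrac{1}{n}\sum_j x_{i+\alpha j}x_{i+\beta j}$ with $(\alpha,\beta)\in\{(1,2),(-1,1),(-2,-1)\}$. Substituting the inversion formula $x_k = \sum_a \hx_a\,\overline{e_k(a)}$ and performing the $j$-summation produces a Kronecker $\delta$ forcing $\alpha a + \beta b\equiv 0\pmod n$, and hence
\[
h^{\alpha\beta}_i(x) = \frac{1}{n}\sum_{(a,b)\in H^{\alpha\beta}} \hx_a\hx_b\, e^{-2\pi\I(a+b)i/n},
\]
where $H^{\alpha\beta} := \{(a,b)\in(\zz/n\zz)^2 : \alpha a + \beta b\equiv 0\}$ has $|H^{\alpha\beta}|=n$. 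Collecting terms by $l:=a+b\pmod n$ and applying Plancherel in $i$,
\[
\sum_i |h^{\alpha\beta}_i(x) - h^{\alpha\beta}_i(y)|^2 = \frac{1}{n}\sum_l\bigg|\sum_{\substack{(a,b)\in H^{\alpha\beta}\\ a+b\equiv l}}(\hx_a\hx_b - \hat y_a\hat y_b)\bigg|^2.
\]
Each inner fibre has at most $\gcd(\beta-\alpha,n)\le 2$ points, so Cauchy-Schwarz on the inner sum followed by summing in $l$ gives
\[
\sum_i |h^{\alpha\beta}_i(x) - h^{\alpha\beta}_i(y)|^2 \le \frac{C}{n}\sum_{(a,b)\in H^{\alpha\beta}}|\hx_a\hx_b - \hat y_a\hat y_b|^2.
\]

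To estimate the right-hand side I telescope $\hx_a\hx_b - \hat y_a\hat y_b = (\hx_a-\hat y_a)\hx_b + \hat y_a(\hx_b - \hat y_b)$ and, writing $M := \max_i|\hx_i-\hat y_i|$, treat a representative term such as $\sum_{(a,b)\in H^{\alpha\beta}}|\hx_a - \hat y_a|^2|\hx_b|^2$, which is at most $M\sum_{(a,b)\in H^{\alpha\beta}}|\hx_a - \hat y_a||\hx_b|^2$. A further Cauchy-Schwarz over $H^{\alpha\beta}$ gives
\[
\sum_{(a,b)\in H^{\alpha\beta}}|\hx_a - \hat y_a||\hx_b|^2 \le \bigg(\sum_{(a,b)\in H^{\alpha\beta}}|\hx_a - \hat y_a|^2\bigg)^{1/2}\bigg(\sum_{(a,b)\in H^{\alpha\beta}}|\hx_b|^4\bigg)^{1/2}.
\]
Since for each fixed $a$ the set $H^{\alpha\beta}$ contains at most $\gcd(\beta,n)\le 2$ values of $b$, and symmetrically for $b$, both sums on the right are bounded by twice the corresponding unconstrained sums. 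Plancherel supplies $\sum_a|\hx_a-\hat y_a|^2 = \sum_k (x_k-y_k)^2\le n$, while the pointwise bound $|\hx_b|\le\sqrt n$ together with $\sum_b|\hx_b|^2\le n$ yields $\sum_b|\hx_b|^4\le n^2$. Thus the right-hand side is at most $CMn^{3/2}$, and dividing by $n$ gives $\sum_i|h^{\alpha\beta}_i(x) - h^{\alpha\beta}_i(y)|^2\le CMn^{1/2}$; summing the three pieces completes the proof.

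The argument is routine once one has the Fourier identity for $h^{\alpha\beta}_i$, and I do not anticipate a serious obstacle. The essential point is that the constraint $\alpha a + \beta b \equiv 0$ restricts the summation to a subset of $(\zz/n\zz)^2$ of size $n$ rather than $n^2$; this is precisely what yields the crucial $n^{1/2}$ saving over the trivial Plancherel bound, and it is also what confines the method to three-term progressions.
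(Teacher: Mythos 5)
Your proof is correct, and it is organized rather differently from the one in the paper, even though both arguments live on the same Fourier-analytic bedrock. The paper keeps everything in physical space for as long as possible: it expands the square $\bigl(\sum_j(\cdots)\bigr)^2$ into a double sum over the two progression parameters, producing $36$ cross terms, pairs each positive term with a negative one, telescopes the resulting degree-four monomial differences coordinate by coordinate, and only then Fourier-transforms each telescoped piece. The Kronecker deltas collapse that piece to a single sum of the form $\frac{1}{n}\sum_d \hx_{-4d}\hx_{2d}\hx_d\hat{z}_d$ with $z=x-y$, which is bounded by $\frac{M}{n}\sum_d|\hx_d|^3\le CMn^{1/2}$ via H\"older and the bounds $|\hx_d|\le\sqrt{n}$, $\sum_d|\hx_d|^2\le n$. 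You instead diagonalize first: you Fourier-expand the gradient itself as a function of $i$, use Plancherel in $i$ to avoid ever expanding the square, and telescope at the level of the Fourier coefficients $\hx_a\hx_b-\hat{y}_a\hat{y}_b$, closing with Cauchy--Schwarz and $\sum_b|\hx_b|^4\le n^2$ in place of the paper's $L^3$ bound. The bookkeeping in your version is lighter (three pieces and uniformly bounded fibres, rather than $36$ cross terms), at the cost of having to track the fibre multiplicities $\gcd(\beta-\alpha,n)$, $\gcd(\alpha,n)$, $\gcd(\beta,n)\le 2$, which you do correctly for all three pairs $(\alpha,\beta)$. Both routes extract the $n^{1/2}$ saving from the same source, namely that the linear constraint coming from the sum over the common difference $j$ confines the Fourier support to an $n$-element subset of $(\zz/n\zz)^2$, combined with $\max_i|\hx_i-\hat{y}_i|$ pulled out once and Plancherel controlling the rest; your closing remark that this is also what limits the method to three-term progressions matches the paper's own caveat.
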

\begin{proof}
Note that for any $x$ and $y$,
\begin{align}
&\sum_{i} (f_i(x)-f_i(y))^2 \label{fixfiy}\\
&= \frac{1}{n^2}\sum_{i} \biggl(\sum_{j} (x_{i+j}x_{i+2j}+ x_{i-j} x_{i+j} + x_{i-2j} x_{i-j} \nonumber\\
&\qquad \qquad  - y_{i+j}y_{i+2j}- y_{i-j} y_{i+j} - y_{i-2j} y_{i-j})\biggr)^2 \nonumber\\
&= \frac{1}{n^2}\sum_{i,j,k} (x_{i+j}x_{i+2j}+ x_{i-j} x_{i+j} + x_{i-2j} x_{i-j} \nonumber\\
&\qquad \qquad - y_{i+j}y_{i+2j}- y_{i-j} y_{i+j} - y_{i-2j} y_{i-j})\nonumber\\
&\qquad \qquad \times (x_{i+k}x_{i+2k}+ x_{i-k} x_{i+k} + x_{i-2k} x_{i-k}\nonumber\\
&\qquad \qquad \qquad - y_{i+k}y_{i+2k}- y_{i-k} y_{i+k} - y_{i-2k} y_{i-k})\, .\nonumber
\end{align}
Let us now expand out the product in the above expression. There will be 36 terms, 18 of which are positive and 18 are negative. The positive terms will be products of fours $x$'s or four $y$'s, and the negative terms will be products of two $x$'s and two $y$'s. Match each positive term with a matching negative term. For example, match $x_{i+j}x_{i+2j} x_{i-k} x_{i+k}$ with $-x_{i+j}x_{i+2j} y_{i-k}y_{i+k}$. Summing over $i$, $j$ and $k$ for this particular pair, we get the expression 
\begin{align}
&\frac{1}{n^2}\sum_{i,j,k} (x_{i+j}x_{i+2j} x_{i-k} x_{i+k}-x_{i+j}x_{i+2j} y_{i-k}y_{i+k})\label{fixfiy1}\\
&= \frac{1}{n^2}\sum_{i,j,k} (x_{i+j}x_{i+2j} (x_{i-k}-y_{i-k}) x_{i+k}-x_{i+j}x_{i+2j} y_{i-k}(x_{i+k}-y_{i+k}))\, .\nonumber
\end{align}
Now consider the first term in the above expression. Let $z=x-y$. Then by the inversion formula,
\begin{align*}
& \frac{1}{n^2}\sum_{i,j,k} x_{i+j}x_{i+2j} x_{i+k}z_{i-k}\\
 &= \frac{1}{n^2}\sum_{i,j,k}\sum_{a,b,c,d} \hx_a \hx_b \hx_c \hat{z}_d\overline{e_{i+j}(a) e_{i+2j}(b)e_{i+k}(c)e_{i-k}(d)}\\
 &= \frac{1}{n^{5/2}}\sum_{a,b,c,d}\sum_{i,j,k} \hx_a \hx_b \hx_c \hat{z}_d\overline{e_{a+b+c+d}(i) e_{a+2b}(j)e_{c-d}(k)}\\
 &= \frac{1}{n}\sum_{a,b,c,d} \hx_a \hx_b \hx_c \hat{z}_d\delta_{a+b+c+d} \delta_{a+2b} \delta_{c-d}= \frac{1}{n}\sum_{d} \hx_{-4d} \hx_{2d} \hx_d \hat{z}_d\, .
\end{align*}
%In the above sum, the four indices $a$, $b$, $c$ and $d$ satisfy three linear equations between themselves, due to the presence of the three delta functions. Therefore there is actually only one degree of freedom. In particular, there are linear maps $l_1$, $l_2$, $l_3$ and $l_4$ from $\zz/n\zz$ into itself,  such that
%\[
%\sum_{a,b,c,d} \hx_a \hx_b \hx_c \hat{z}_d\delta_{a+b+c+d} \delta_{a+2b} \delta_{c-d} = \sum_{i} \hx_{l_1(i)} \hx_{l_2(i)} \hx_{l_3(i)} \hat{z}_{l_4(i)}\, .
%\]
%For example, taking $l_1(i) = -4i$, $l_2 (i) = 2i$ and $l_3(i) = l_4(i)=i$ does the job. 
By the Plancherel identity and the fact that $x\in [0,1]^{\zz/n\zz}$, $\sum_j |\hx_j|^2 \le n$. In particular, $|\hx_j|\le \sqrt{n}$ for all $j$. Let $M:=\max_i |\hat{z}_i|$. Using these observations and H\"older's inequality, we see that the above sum is bounded above by 
\begin{align*}
&\frac{M}{n} \biggl(\sum_d|\hx_{-4d}|^3\sum_d|\hx_{2d}|^3\sum_d|\hx_{d}|^3\biggr)^{1/3}\\
&\le \frac{CM}{n}\sum_d|\hx_{d}|^3\le CMn^{1/2}\, . 
\end{align*}
This is a bound on the first term in the right-hand side of \eqref{fixfiy1}. Similarly, it may be verified that the same bound holds for the second term in the right-hand side of \eqref{fixfiy1}, and also for all terms in the expansion of \eqref{fixfiy}. This completes the proof of the lemma.
\end{proof}
\begin{lmm}\label{ade}
For the function $f$ considered in this section, one can find sets $\dd(\ep)$ satisfying \eqref{entropy} such that $|\dd(\ep)|\le C_1(n/\ep^2)^{C_2/\ep^4}$ where $C_1$ and $C_2$ are universal constants. 
\end{lmm}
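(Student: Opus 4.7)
The plan is to exploit the Fourier-analytic bound of Lemma~\ref{fourier}, $\sum_i(f_i(x)-f_i(y))^2\le Cn^{1/2}\max_i|\hx_i-\hat y_i|$: it will suffice to produce, for every $x\in\{0,1\}^{\zz/n\zz}$, an element $y$ from a small family with $\|\hx-\hat y\|_\infty\le\tau:=n^{1/2}\ep^2/C$, and then put $d_i:=f_i(y)$. The essential input will be Plancherel: $\sum_j|\hx_j|^2=\sum_k x_k^2\le n$, so $|\hx_j|\le\sqrt n$ and the set $S(x):=\{j:|\hx_j|>\tau\}$ of ``large'' Fourier modes has cardinality at most $n/\tau^2=O(1/\ep^4)$. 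In other words, the Fourier transform of any $\{0,1\}$-vector automatically has only $O(1/\ep^4)$ coefficients above threshold $\tau$, so each such $x$ should be approximable using only $O(1/\ep^4)$ pieces of information.

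Concretely, I would fix a grid $\Lambda\subseteq\{z\in\cc:|z|\le\sqrt n\}$ of spacing $\tau/2$, so $|\Lambda|=O(n/\tau^2)=O(1/\ep^4)$. Given $x$, I round each $\hx_j$ with $j\in S(x)$ to the nearest $w_j\in\Lambda$, enforcing the symmetry $w_{-j}=\overline{w_j}$ (consistent with the fact that $\hx$ itself satisfies this), and set $\hat y_j=w_j$ for $j\in S(x)$, $\hat y_j=0$ otherwise. Letting $y$ be the inverse Fourier transform of $\hat y$ (a real vector, by the imposed symmetry), I place $d:=(f_i(y))_{i\in\zz/n\zz}$ into $\dd(\ep)$. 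By construction $\max_j|\hx_j-\hat y_j|\le\tau$, so Lemma~\ref{fourier} will give $\sum_i(f_i(x)-d_i)^2\le Cn^{1/2}\tau=n\ep^2$, confirming \eqref{entropy}.

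For the cardinality bound, each $d\in\dd(\ep)$ is determined by the pair $(S,(w_j)_{j\in S})$ with $|S|\le k:=\lceil n/\tau^2\rceil=O(1/\ep^4)$ and $w_j\in\Lambda$, so
\[
|\dd(\ep)|\le(k+1)\binom{n}{k}|\Lambda|^k\le(k+1)\Big(\frac{en}{k}\Big)^k\Big(\frac{C'}{\ep^4}\Big)^k,
\]
and taking logarithms with $k=\Theta(1/\ep^4)$ yields
\[
\log|\dd(\ep)|=O\Big(\tfrac{1}{\ep^4}\log\tfrac{n}{\ep^2}\Big),
\]
which is the desired bound $|\dd(\ep)|\le C_1(n/\ep^2)^{C_2/\ep^4}$.

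The one step requiring real care is that the vector $y$ produced above need not lie in $[0,1]^{\zz/n\zz}$, whereas Lemma~\ref{fourier} was stated under that assumption. My plan is to check that its proof only ever uses $\|\hat y\|_\infty\le\sqrt n$ and $\sum_j|\hat y_j|^2\le n$ (which together give the key bound $\sum_d|\hat y_d|^3\le n^{3/2}$ by H\"older), both of which hold for my $\hat y$ up to universal constants: $|w_j|\le\sqrt n+\tau$, and $\sum_j|\hat y_j|^2\le\sum_j|\hx_j|^2+O(|S|\tau^2)=O(n)$. Lemma~\ref{fourier} then applies to the pair $(x,y)$ with a possibly worse universal constant, which I absorb by shrinking $\tau$ by a constant factor at the start. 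Everything else reduces to Plancherel plus a routine entropy count.
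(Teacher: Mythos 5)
Your proposal is correct and follows essentially the same route as the paper: use Plancherel to see that only $O(1/\ep^4)$ Fourier coefficients of $x$ exceed the threshold, round them to a grid, count the possibilities, and transfer back via Lemma \ref{fourier}. The only difference is that the paper sidesteps your one delicate point (applying Lemma \ref{fourier} to a $y$ outside $[0,1]^{\zz/n\zz}$) by declaring $x\sim y$ when $R(\hx)=R(\hat y)$ and choosing a genuine representative $y\in[0,1]^{\zz/n\zz}$ from each equivalence class, so the lemma applies as stated; your extension of the lemma off the cube is nevertheless valid, since its proof only uses $\sum_d|\hat y_d|^3\le Cn^{3/2}$, which you verify.
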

\begin{proof}
Take any $\ep>0$. Let $\gamma := c \ep^2 \sqrt{n}$, where $c$ is a universal constant that will be chosen later. Define a map $R: \cc^n \ra \cc^n$ as follows: For each $i$, let the $i$th coordinate of $y = R(x)$ be the complex number closest to $x_i$ whose real and imaginary parts are both integer multiples of $\gamma$. Clearly, $|x_i-y_i|\le \gamma$. Moreover, if $|x_i|< \gamma/2$ then $y_i = 0$. 

Let $\mm$ be the set of all $\hx$ as $x$ ranges over $[0,1]^n$. Take any $x\in [0,1]^n$ and let $y := R(\hx)$.  Let $A$ be the set of all $i$ such that $|\hx_i|\ge \gamma/2$. Then $y_i = 0$ for each $i\not\in A$. Given $A$, there are at most $Cn/\gamma^2$ possible values of each $y_i$, $i\in A$ since $|\hx_i|\le \sqrt{n}$. On the other hand by the Plancherel identity, 
\begin{align*}
|A|&\le \frac{4}{\gamma^2}\sum_{i=0}^{n-1} |\hx_i|^2\le \frac{4n}{\gamma^2}\, ,
\end{align*}
implying that there are at most $n^{4n/\gamma^2}$ possible candidates  for the set $A$. Combining these observations, we realize that the number of possible values of $y$ is at most 
\[
n^{4n/\gamma^2} (Cn/\gamma^2)^{4n/\gamma^2}\, .% = 4 n^{8n/\gamma^2}\, .
\]
This, therefore, is a bound on the size of $R(\mm)$. 

Say that two points $x$ and $y$ in $[0,1]^n$ are equivalent if $R(\hx)=R(\hat{y})$. Clearly, this is an equivalence relation. Suppose that $x$ and $y$ are equivalent. Let $z=R(\hx)=R(\hat{y})$. Then for each $i$,
\begin{align*}
|\hx_i-\hat{y}_i|&\le |\hx_i-z_i|+|z_i-\hat{y}_i|\le 2\gamma\, .
\end{align*}
Construct the set $B$ by choosing one $x$ from each equivalence class. Then clearly
\[
|B|\le |R(\mm)|\le n^{4n/\gamma^2} (Cn/\gamma^2)^{4n/\gamma^2}\, .
\]
By the bounds obtained above and Lemma \ref{fourier}, for any $x\in [0,1]^n$, there exists $y\in B$ such that 
\begin{align*}
\sum_{i} (f_i(x)-f_i(y))^2 &\le Cn^{1/2}\max_i|\hx_i - \hat{y}_i| \le Cn^{1/2} \gamma\, .% \le \ep^2 n\, .
\end{align*}
The right-hand side is less than $\ep^2 n$ if the constant $c$ in the definition of $\gamma$ is chosen sufficiently small. Defining $\dd(\ep)$ to be the set $\nabla f (B)$ completes the proof. 
\end{proof}
\begin{lmm}\label{alip}
Let $\phi_p(t)$ be defined as in \eqref{phidef}. Then for any $0<\delta< t< 1$, 
\begin{align*}
\phi_p(t-\delta) &\ge \phi_p(t)-\biggl(\frac{\delta}{1-t}\biggr)^{1/3}n \log (1/p)\, .
\end{align*}
\end{lmm}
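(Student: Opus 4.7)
The plan is to mimic the proof of Lemma \ref{lipschitz} verbatim, specialized to the degree-three polynomial $f(x)=\frac{1}{n}\sum_{i,j} x_i x_{i+j} x_{i+2j}$. Here the ``all-ones" vector $l$ satisfies $f(l)=n$, so the role of $t_0$ in Lemma \ref{lipschitz} is now played by $1$, and the role of $m$ (the degree) is played by $3$. These two substitutions account for the exponent $1/3$ and the denominator $1-t$ appearing in the statement.

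First I take $x\in[0,1]^{\zz/n\zz}$ which attains the infimum defining $\phi_p(t-\delta)$, so that $f(x)\ge (t-\delta)n$ and $I_p(x)=\phi_p(t-\delta)$. If already $f(x)\ge tn$ there is nothing to prove; otherwise, set
\[
\ep := \biggl(\frac{tn - f(x)}{n - f(x)}\biggr)^{1/3}\in (0,1]\, , \qquad y_i := x_i + \ep(1-x_i)\, .
\]
Applying the elementary inequality $\prod_{i=1}^{r}(a_i+b(1-a_i))\ge (1-b^r)\prod_{i=1}^r a_i + b^r$ (used in the same way as \eqref{abineq}, with $r=3$) to each term $y_i y_{i+j} y_{i+2j}$ and summing over $i,j$ gives
\[
f(y) \ge (1-\ep^3) f(x) + \ep^3\, n \ge tn\, ,
\]
by the choice of $\ep$.

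Then $y=(1-\ep)x+\ep l$, so the convexity of $I_p$ together with $I_p(l)=n\log(1/p)$ yields
\[
\phi_p(t)\le I_p(y)\le (1-\ep)I_p(x)+\ep I_p(l)\le \phi_p(t-\delta)+\ep\, n\log(1/p)\, .
\]
Finally, since $f(x)\ge (t-\delta)n$ and the map $u\mapsto (tn-u)/(n-u)$ is increasing on $u\le tn$,
\[
\ep^3 \le \frac{tn - (t-\delta)n}{n - (t-\delta)n} = \frac{\delta}{1-t+\delta}\le \frac{\delta}{1-t}\, ,
\]
which rearranges to the claimed bound. There is no real obstacle: the only thing to verify carefully is that the pointwise inequality $\prod(a_i+b(1-a_i))\ge (1-b^r)\prod a_i + b^r$, which was invoked without proof earlier as \eqref{abineq}, applies here with $r=3$ term-by-term to the arithmetic-progression sum, and this is immediate since every summand in $f$ is a product of exactly three coordinates of $x$.
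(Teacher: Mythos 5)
Your proof is correct and follows the paper's argument for Lemma \ref{alip} essentially verbatim: the paper likewise specializes the tilting construction of Lemma \ref{lipschitz} to degree $3$ with $f(l)=n$ playing the role of $T(l)$, applies \eqref{abineq} termwise, and uses convexity of $I_p$ in the same way. One small slip in your justification: the map $u\mapsto (tn-u)/(n-u)$ is \emph{decreasing} for $u<n$ (its derivative is $n(t-1)/(n-u)^2<0$ since $t<1$), and this is precisely the monotonicity needed to pass from $f(x)\ge (t-\delta)n$ to $\ep^3\le \delta/(1-t+\delta)$, so your final inequality stands even though you asserted the opposite direction.
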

\begin{proof}
%If $tn > \|T\|$, then $\phi_p(t)=-\infty$ since there is no $x$ that satisfies $T(x)> \|T\|$. Therefore assume that $tn \le \|T\|$. Note that $T(l)=\|T\|$. 
Take any $x\in [0,1]^n$ such that $f(x) \ge (t-\delta)n$ and $x$ minimizes $I_p(x)$ among all $x$ satisfying this inequality. If $f(x)\ge tn$, then we immediately have $\phi_p(t)\le I_p(x)=\phi_p(t-\delta)$, and there is nothing more to prove. So let us assume that $f(x)<tn$. Let 
 \[
 \ep := \biggl(\frac{tn - f(x)}{n-f(x)}\biggr)^{1/3}\, .
 \]
For each $i$, let  
\[
y_{i} := x_{i} + \ep(1-x_{i})\, .  
\]  
Then $y\in [0,1]^n$, and by Lemma \ref{abineq},  we get
\begin{align*}
f(y) &\ge (1-\ep^{3}) f(x) + \ep^{3} = tn\, .
\end{align*}
Thus, by the convexity of $I_p$,
\begin{align*}
\phi_p(t) &\le I_p(y) \le (1-\ep)I_p(x) + \ep\, n \log (1/p)\\
&\le I_p(x) + \ep\, n \log (1/p) = \phi_p(t-\delta) + \ep\, n \log (1/p)\, .
\end{align*}
Since $f(x)\ge (t-\delta)n$, %$\|T\|\ge N(N-1)\cdots (N-k+1)\ge N^k/2$, and 
\begin{align*}
\ep^{3} &\le  \frac{tn - (t-\delta)n}{n-(t-\delta)n} \le \frac{\delta }{1-t}\,  .
\end{align*}
This completes the proof of the lemma. 
\end{proof}
\begin{lmm}\label{aklmm}
For any $p\ge n^{-1}$ and $t> 0$,
\[
\phi_p(t)\le C t^{1/2}n \log n\, .
\]
\end{lmm}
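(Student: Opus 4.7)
The plan is to construct an explicit feasible $x \in [0,1]^{\zz/n\zz}$ for the infimum defining $\phi_p(t)$, following the same blueprint as Lemma \ref{phiupper}. Set $m := \min(\lceil 2\sqrt{t}\,n\rceil, n)$, let $A := \{0, 1, \ldots, m-1\}$ regarded as a subset of $\zz/n\zz$, and define $x$ by $x_i := 1$ if $i \in A$ and $x_i := p$ otherwise. Since $I_p(p) = 0$ and $I_p(1) = \log(1/p)$, we immediately get $I_p(x) = m \log(1/p)$. Combined with the assumption $p \ge n^{-1}$ this yields $I_p(x) \le m \log n \le (2\sqrt{t}\,n + 1)\log n \le C \sqrt{t}\,n \log n$ for a universal constant $C$, at least in the regime where $\sqrt{t}\,n \ge 1$; the opposite regime is absorbed by the observation that $\phi_p(t) = 0$ once $t \le p^3$ (just take $x \equiv p$).

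The main content is the verification of feasibility, $f(x) \ge tn$. Because every $x_i \in [p,1]$ is nonnegative, $n\,f(x) = \sum_{i,j} x_i x_{i+j} x_{i+2j}$ is bounded below by the count of pairs $(i,j) \in (\zz/n\zz)^2$ for which $\{i, i+j, i+2j\} \subseteq A$, each such triple contributing at least $1$. For $A$ an interval and $m \le n/2$ this count is at least $m^2/2$: the trivial difference $j = 0$ contributes $m$, and for each $j$ with $1 \le |j| \le \lfloor (m-1)/2 \rfloor$ there are $m - 2|j|$ starting points $i$ for which $i, i+j, i+2j$ all lie inside $\{0, \ldots, m-1\}$ as integers, with no modular wrap-around. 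Summing gives $m + 2\sum_{j=1}^{\lfloor (m-1)/2\rfloor}(m - 2j) \ge m^2/2$, hence $f(x) \ge m^2/(2n) \ge 2tn \ge tn$. When the cap $m = n$ is active, $x \equiv 1$ and $f(x) = n \ge tn$ since $t \le 1$ in the nontrivial range.

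The dominant step is thus the elementary lower bound on the number of three-term progressions fitting inside an interval of length $m$; everything else is bookkeeping. The factor $\log n$ on the right-hand side comes directly from converting $\log(1/p)$ via the hypothesis $p \ge n^{-1}$, and no properties beyond nonnegativity of $I_p$ and positivity of each $x_i$ are needed.
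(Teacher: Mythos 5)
Your construction is essentially identical to the paper's (which sets the first $3t^{1/2}n$ coordinates to $1$ and the rest to $p$, and simply asserts $f(x)\ge tn$ and $I_p(x)\le Ct^{1/2}n\log n$); you merely supply the explicit count of three-term progressions inside the interval that the paper omits, and that count is correct. Two cosmetic remarks: your counting never actually needs $m\le n/2$ (for every $m\le n$ the triples $i,i+j,i+2j$ with $0\le i\le m-1-2j$ and $1\le j\le\lfloor (m-1)/2\rfloor$ sit inside $\{0,\dots,m-1\}$ as integers, and the ``positive'' and ``negative'' values of $j$ are distinct residues since $j+j'\le m-1<n$), so the case $n/2<m<n$ that your case split formally leaves out is covered by the identical computation; and the corner regime $p^3<t<n^{-2}$, where $\sqrt{t}\,n<1$ but $x\equiv p$ is infeasible, is handled by neither your argument nor the paper's, though it is immaterial to the application, where $t=\kappa p^3$ and $p\ge n^{-1/162}$ force $\sqrt{t}\,n\gg 1$.
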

\begin{proof}
Define $x\in [0,1]^n$ as the vector whose first $3t^{1/2}n$ coordinates are equal to $1$ and the rest are equal to $p$. 
Then 
\begin{align*}
f(x) &= \frac{1}{n} \sum_{i,j} x_ix_{i+j}x_{i+2j} \ge tn\, ,
\end{align*}
and $I_p(x) \le Ct^{1/2} n \log n$. This proves the claim.
\end{proof}

\begin{lmm}\label{aconc}
Suppose that $p\ge n^{-1/6}$. Then for any $\kappa > 1$, 
\[
\pp(f(Y)\ge \kappa p^3 n) \le Ce^{-cn p^6}
\]
where $C$ and $c$ depend only on $\kappa$. 
\end{lmm}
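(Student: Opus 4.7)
The desired bound has the sub-Gaussian shape $e^{-cnp^6}=e^{-c(np^3)^2/n}$, which is exactly what one gets from a bounded-differences concentration estimate for a function at scale $np^3$ with $O(1)$ sensitivity in each of $n$ coordinates. My plan is therefore to deduce the lemma directly from McDiarmid's bounded differences inequality \cite{mcdiarmid89} applied to $f(Y)$, viewed as a function of the independent Bernoullis $Y_0,\dots,Y_{n-1}$, rather than to invoke any of the heavier machinery of the paper.

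I would first compute the mean exactly. Separating the $j=0$ contribution and using that the three indices $i,i+j,i+2j$ are distinct for every nonzero $j$ (apart from a possible $j=n/2$ degeneracy when $n$ is even, which only contributes an $O(p^2)$ term), one finds
\[
\ee f(Y)=\frac{1}{n}\sum_{i,j}\ee(Y_i Y_{i+j} Y_{i+2j})=(n-1)p^3+p+O(p^2).
\]
Because the assumption $p\ge n^{-1/6}$ forces $p\ll np^3$, this yields $\kappa p^3 n-\ee f(Y)\ge \tfrac{1}{2}(\kappa-1)np^3$ for all $n$ large enough (and for small $n$ the claimed bound is trivial by enlarging the constant $C$).

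Next I would verify bounded differences. Flipping a single coordinate $Y_i$ only affects summands $Y_{i'}Y_{i'+j'}Y_{i'+2j'}$ in which $i$ equals one of $i'$, $i'+j'$, or $i'+2j'$; each of these three conditions admits at most $n$ solutions in $(i',j')$, giving at most $3n$ affected triples, and each such triple changes by the product of the remaining two $Y$'s, which lies in $[0,1]$. After the $1/n$ normalization this gives $|f(Y)-f(Y')|\le 3$ whenever $Y$ and $Y'$ differ in a single coordinate. McDiarmid's inequality with $c_i=3$ for every $i$ then produces
\[
\pp\bigl(f(Y)\ge \ee f(Y)+t\bigr)\le \exp\!\Bigl(-\tfrac{2t^2}{9n}\Bigr),
\]
and substituting $t=\tfrac12(\kappa-1)np^3$ yields $\pp(f(Y)\ge \kappa p^3 n)\le \exp\!\bigl(-\tfrac{(\kappa-1)^2}{18}np^6\bigr)$, which is exactly the claim with $C=1$ and $c=(\kappa-1)^2/18$.

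There is essentially no real obstacle here: the whole proof is a short Chernoff-style concentration estimate that does not touch any of the variational or entropy techniques of the paper. The only point that deserves care is the bounded-differences verification, where it is crucial that the $1/n$ factor built into the definition of $f$ precisely cancels the $\Theta(n)$ three-term arithmetic progressions through any fixed coordinate, so that the bounded-differences constant remains $O(1)$ rather than growing with $n$.
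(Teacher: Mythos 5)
Your proof is correct and follows essentially the same route as the paper's: the paper also bounds $\ee f(Y)$ by $\kappa' p^3 n$ for $1<\kappa'<\kappa$, notes via the gradient bound $\|f_i\|\le C$ that $f$ has $O(1)$ bounded differences, and applies the Hoeffding--Azuma/McDiarmid inequality. You have merely filled in the details (the exact mean computation and the count of $3n$ affected triples per coordinate) that the paper leaves as ``straightforward.''
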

\begin{proof}
Let $\kappa' := (1+\kappa)/2$, so that $1< \kappa'< \kappa$. It is easy to see that if $n$ is sufficiently large (depending on $\kappa$), then $\ee(f(Y))\le \kappa' p^3 n$. Again, \eqref{afbd2} shows that $f(Y)$ changes at most by a bounded amount if one $Y_i$ changes value. Therefore a straightforward application of Hoeffding's inequality \cite{hoeffding63} completes the proof of the lemma. 
\end{proof}
\begin{proof}[Proof of Theorem \ref{arith}]
Let $0<\delta < t<1$, where $t = \kappa p^3$ for some $\kappa > 1$ and $\delta$ is to be chosen later. Fix another small quantity $\ep$, also to be chosen later. Assume that $\ep$ and $\delta$ are both bigger than $n^{-1/3}$. Already from the statement of the theorem, recall that $p\ge n^{-1/162}$. 

Throughout this proof $C$ will denote any constant that may depend only on~$\kappa$. Let $K$, $\alpha$, $\beta_i$ and $\gamma_{ij}$ be defined as in Theorem \ref{uppertailthm}. By \eqref{afbd}, \eqref{afbd2}, Lemma \ref{aklmm} and the assumption that $n^{-1/162}\le p\le 1-n^{-1}$, 
\[
K\le Ct^{1/2}\log n\, , \ \ \alpha\le Cn \log n\, , \ \ \beta_i \le \frac{Ct^{1/2}\log n}{\delta}\, , \ \ \gamma_{ij} \le  \frac{Ct^{1/2}\log n}{\delta^2 n}\, .
\]
These imply the bounds
\begin{align*}
&\sum_i \alpha \gamma_{ii} \le \frac{Ct^{1/2}n(\log n)^2}{\delta^2}\le \frac{Ct n (\log n)^2}{\delta^{5/2}}\, ,\\
&\sum_i \beta_i^2 \le \frac{Ctn(\log n)^2}{\delta^2} \, , \ \ \sum_{i,j} \alpha \gamma_{ij}^2 \le \frac{C tn (\log n)^3}{ \delta^{4}}\, ,\\
&\sum_{i,j}\beta_i(\beta_j + 4) \gamma_{ij} \le\frac{Ct^{3/2} n (\log n)^3}{ \delta^{4}} \, , \\
&\sum_i \gamma_{ii}^2  \le \frac{Ct(\log n)^2 }{\delta^4 n}\, , \ \ \sum_i \gamma_{ii}\le \frac{Ct^{1/2}\log n}{\delta^2}\, . 
\end{align*}
Combining these, we see that the smoothness term is bounded by
\begin{align*}
&C t^{1/2} \delta^{-2}n^{1/2}(\log n)^{3/2}+ C t \delta^{-3}(\log n)^2 + C t^{1/2}\delta^{-2}\log n\, .
\end{align*}
Since $\delta > n^{-1/3}$, the above expression is bounded by 
\begin{align}\label{asmooth}
C t^{1/2} \delta^{-2}n^{1/2}(\log n)^{3/2}\, .
\end{align}
On the other hand by Lemma \ref{ade} and the assumption that $\ep > n^{-1/3}$, 
\begin{align*}
\log |\dd(\ep)|\le \frac{C\log (n/\ep^2)}{\ep^4}\le \frac{C\log n}{\ep^4}\, .
\end{align*}
Therefore the complexity term is bounded above by 
\begin{align*}
C\ep t^{1/2}\delta^{-1}n \log n + \log\biggl(\frac{Ct^{1/2}\log n}{\delta \ep}\biggr) + \frac{Ct^2(\log n)^5}{\delta^4\ep^4}\, . 
\end{align*}
Choosing 
\[
\ep = t^{3/10}\delta^{-3/5}n^{-1/5}(\log n)^{4/5}
\]
and recalling the assumed lower bounds on $\ep$, $\delta$ and $p$, we see that the complexity term is bounded by
\begin{align}\label{acomplex}
Ct^{4/5}\delta^{-8/5}n^{4/5}(\log n)^{9/5}\, .
\end{align}
By Theorem \ref{uppertailthm}, Lemma \ref{alip}, the bound \eqref{asmooth} for the smoothness term, and the bound \eqref{acomplex} for the complexity term, we get that
\begin{align*}
\log \pp(f(Y)\ge tn) &\le -\phi_p(t) + C\delta^{1/3} n\log n + Ct^{4/5}\delta^{-8/5}n^{4/5}(\log n)^{9/5}\\
&\qquad  + C t^{1/2} \delta^{-2}n^{1/2}(\log n)^{3/2}\, .
\end{align*}
Now choose
\[
\delta = t^{12/29} n^{-3/29} (\log n)^{12/29}\, .
\]
Recalling that $t=\kappa p^3$, this gives
\begin{align*}
\log \pp(f(Y)\ge tn) &\le -\phi_p(t) + C p^{12/29} n^{28/29} (\log n)^{33/29} \\
&\qquad + Cp^{-57/58}n^{41/58}(\log n)^{39/58}\, .
\end{align*}
By the assumed lower bound on $p$, it is easy to see that the second term on the right dominates the third if $n$ is large enough. Together with an application of Lemma \ref{aconc}, this completes the proof of the upper bound in Theorem \ref{arith}. For the lower bound, recall that 
\begin{align*}
\log \pp(f(Y)\ge tn) &\ge -\phi_p(t+\delta_0) - \ep_0 n - \log 2\\
&\ge -\phi_p(t) - C\delta_0^{1/3}n \log n - \ep_0 n\, ,
\end{align*}
where
\[
\ep_0  = \frac{1}{\sqrt{n}}\biggl(4+\biggl|\log\frac{p}{1-p}\biggr|\biggr)\le Cn^{-1/2}\log n
\]
and
\[
\delta_0 = \frac{2}{n}\biggl(\sum_{i=1}^n (ac_{ii} + b_i^2)\biggr)^{1/2}\le Cn^{-1/2}\, .
\]
An application of Lemma \ref{aconc} completes the proof of the lower bound.
\end{proof}

\section{Proof of Theorem \ref{ergmthm}}
Let all notational conventions be the same as in Section \ref{graphsec}. However, instead of a single $H$, consider $l$ graphs $H_1,\ldots, H_l$, and define $T_1,\ldots, T_l$ accordingly. 

Throughout this section, $C$ will denote any constant that may depend only on the graphs $H_1,\ldots, H_l$. Define
\[
f(x) := \beta_1T_1(x)+\cdots + \beta_lT_l(x)\, .
\]
Let $B := 1+|\beta_1|+\cdots +|\beta_l|$, as in the statement of the theorem. 
Let $a$, $b_{(ij)}$ and $c_{(ij)(i'j')}$ be as in Theorem \ref{freethm}. Clearly, 
\[
a \le N^2\sum_{r=1}^l |\beta_r|\le C B N^2\, .
\]
%Let $k_r$ be the number of vertices in $H_r$ and $m_r$ be the number of edges in $H_r$, for $r=1,\ldots, l$.  
By Lemma \ref{test1}, we get the estimates
\[
b_{(ij)} \le CB %\sum_{r=1}^l |\beta_r| m_r\le CB\, ,
\]
and
\[
c_{(ij)(i'j')}\le
\begin{cases}
 CBN^{-1} \ &\text{ if $|\{i,j,i',j'\}| = 2$ or $3$\, ,} \\
 CBN^{-2} & \text{ if $|\{i,j,i',j'\}| = 4$\, .}
\end{cases} 
%\begin{cases}
%N^{-1}\sum_{r=1}^l |\beta_r|m_r(m_r-1) \le CBN^{-1} \ &\text{ if $|\{i,j,i',j'\}| = 2$ or $3$\, ,} \\
%N^{-2}\sum_{r=1}^l |\beta_r| m_r(m_r-1)\le CBN^{-2} & \text{ if $|\{i,j,i',j'\}| = 4$\, .}
%\end{cases} 
\]
Let $\dd_1(\ep),\ldots, \dd_l(\ep)$ be the $\dd(\ep)$'s for $T_1,\ldots, T_l$. Define
\[
\dd(\ep) := \{\beta_1d_1+\cdots +\beta_ld_l \,  : \, d_r\in \dd_i(\ep/\beta_rl), \, r=1,\ldots, l\}\, .
\]
Clearly, for any $x\in [0,1]^n$, there exists $d_1\in \dd_1(\ep/\beta_1 l)$, \ldots, $d_l\in\dd_l(\ep/\beta_l l)$ such that
\begin{align*}
\sum_{i=1}^n (f_i(x)-(\beta_1d_{1i}+\cdots +\beta_ld_{li}))^2 &\le l \sum_{r=1}^l\sum_{i=1}^n \beta_r^2(T_{ri}(x)-d_{ri})^2\le n\ep^2\, .
\end{align*}
Therefore, $\dd(\ep)$ satisfies the requirement of Theorem \ref{freethm}. Also, 
\begin{equation}\label{ergm1}
|\dd(\ep)|\le\prod_{r=1}^l |\dd_r(\ep/\beta_rl)|\, .
\end{equation}
By the bounds on $a$, $b_{(ij)}$ and $c_{(ij)(i'j')}$ obtained above, the following estimates are easy: 
\begin{align*}
&\sum_{(ij)} a c_{(ij)(ij)} \le CB^2 N^3\, , \ \ \sum_{(ij)} b_{(ij)}^2 \le CB^2 N^2\, ,\\
&\sum_{(ij), (i'j')} ac_{(ij)(i'j')}^2 \le CB^3 N^3\, , \\
&\sum_{(ij), (i'j')} b_{(ij)} (b_{(i'j')}+4)c_{(ij)(i'j')}\le CB^3N^2\, ,\\
&\sum_{(ij)} c_{(ij)(ij)}^2 \le CB^2\, , \ \ \sum_{(ij)} c_{(ij)(ij)} \le CBN\, .
\end{align*}
Combining these estimates, we see that the smoothness term is bounded by $C B^2 N^{3/2}$. 
Next, by \eqref{ergm1} and Lemma \ref{test2}, 
\begin{align*}
\log |\dd(\ep)|&\le \sum_{r=1}^l \log |\dd_r(\ep/\beta_rl)|\\
&\le \frac{CB^4 N}{\ep^4} \log \frac{CB^4}{\ep^4}\, .
\end{align*}
Therefore, the complexity term (of Theorem \ref{freethm}) is bounded by
\begin{align*}
CBN^2\ep + \frac{CB^4 N}{\ep^4} \log \frac{CB^4}{\ep^4}\, . 
\end{align*}
Taking 
\[
\ep = \biggl(\frac{B^3\log N}{N}\biggr)^{1/5}\, ,
\]
this gives the bound
\[
CB^{8/5} N^{9/5}(\log N)^{1/5}\biggl(1+\frac{\log B}{\log N}\biggr)\, .
\]
By Theorem \ref{freethm}, this completes the proof of the upper bound. The lower bound follows easily from Theorem \ref{freethm} and the bound on $\sum c_{(ij)(ij)}$ obtained above. This finishes the proof of Theorem \ref{ergmthm}.

\section*{Acknowledgments}
The authors thank Van Vu, Alex Zhai, Yufei Zhao and the anonymous referee for a number of  helpful comments. 
%\vskip.3in

%\vskip.5in

\end{document}